\definecolor{linkred}{rgb}{0.6,0,0}
\newcounter{mainTheoremCounter}
\newtheoremstyle{mainTheorem}{}{}{\addtolength{\leftskip}{0em}\itshape}{}{\bfseries}{}{.5em}{\thmname{#1} {#2}}
\theoremstyle{mainTheorem}
\newtheorem{mainTheorem}[mainTheoremCounter]{Theorem}
\newcounter{mainDefinitionCounter}
\newtheoremstyle{mainDefinition}{}{}{}{}{\bfseries}{}{.5em}{\thmname{#1} {#2}}
\theoremstyle{mainDefinition}
\newtheorem{mainDefinition}[mainDefinitionCounter]{Definition}
\newcounter{mainRemarkCounter}
\newtheoremstyle{mainRemark}{}{}{}{}{\bfseries}{}{.5em}{\thmname{#1} {#2}}
\theoremstyle{mainRemark}
\newtheorem{mainRemark}[mainRemarkCounter]{Remark}
\newcounter{appendixTheoremCounter}
\newtheoremstyle{appendixTheorem}{}{}{\addtolength{\leftskip}{0em}\itshape}{}{\bfseries}{}{.5em}{\thmname{#1} {#2}}
\theoremstyle{appendixTheorem}
\theoremstyle{plain}  
\newtheorem{theorem}{Theorem}[subsection]
\newtheorem{lemma}[theorem]{Lemma}
\newtheorem{corollary}[theorem]{Corollary}
\theoremstyle{definition} 
\newtheorem{definition}[theorem]{Definition}
\newtheorem{re}[theorem]{}
\newtheorem{remark}[theorem]{Remark}
\newcommand{\num}{\tag{\addtocounter{equation}{1}\arabic{equation}}}
\newcommand{\SmNeg}{\mbox{\scriptsize-}}
\newcommand{\C}{\mathbb{C}}
\newcommand{\E}{\mathbb{E}}
\newcommand{\F}{\mathbb{F}}
\newcommand{\Z}{\mathbb{Z}}
\newcommand{\bbL}{\mathbb{L}}
\newcommand{\eL}{\bbL^{\hspace{-0.2em}\mathtt{e}}}
\newcommand{\eE}{\E^{\mathtt{e}}}
\newcommand{\eF}{\mathbb{F}^{\mathtt{e}}}
\newcommand{\eT}{\mathbb{T}^{\mathtt{e}}}
\newcommand{\ephi}{\phi^{\mathtt{e}}}
\renewcommand{\P}{\mathbb{P}}
\newcommand{\calB}{\mathcal{B}}
\newcommand{\calC}{\mathcal{C}}
\newcommand{\calE}{\mathcal{E}}
\newcommand{\calF}{\mathcal{F}}
\newcommand{\calH}{\mathcal{H}}
\newcommand{\calP}{\mathcal{P}}
\newcommand{\calR}{\mathcal{R}}
\newcommand{\calT}{\mathcal{T}}
\newcommand{\calU}{\mathcal{U}}
\newcommand{\calV}{\mathcal{V}}
\newcommand{\calW}{\mathcal{W}}
\newcommand{\calX}{\mathcal{X}}
\newcommand{\calY}{\mathcal{Y}}
\newcommand{\calZ}{\mathcal{Z}}
\newcommand{\frakM}{\mathfrak{M}}
\newcommand{\frakD}{\mathfrak{D}}
\newcommand{\Tot}{\mathsf{Tot}}
\newcommand{\fixedSM}{\mathtt{G}}
\newcommand{\fixedIncSM}{\bm{k}}
\newcommand{\fixedRSM}{\mathtt{G}^r}
\newcommand{\fixedIncRSM}{{\bm{k}^r}}
\newcommand{\fixedDR}{\mathtt{F}}
\newcommand{\fixedIncDR}{\bm{j}}
\newcommand{\nonsimpDR}{\widehat{\mathtt{F}}}
\renewcommand{\part}{\lambda}
\newcommand{\fix}{\mathtt{fix}}
\newcommand{\mov}{\mathtt{mov}}
\newcommand{\fixmov}{\mathtt{f.m.}}
\newcommand{\ForMap}{\bm{\mathtt{q}}} 
\newcommand{\ForMapSM}{\bm{\mathtt{p}}} 
\newcommand{\ForMapRSMtoSM}{\bm{\upsilon}}
\newcommand{\ForMapRSMtoSMfixed}{\bm{\mathtt{u}}}
\newcommand{\ForMapDRtoRSM}{\bm{\nu}}
\newcommand{\ForMapDRtoRSMfixed}{\bm{\mathtt{v}}}
\renewcommand{\O}{\mathcal{O}}
\newcommand{\Spec}{\mathrm{Spec}}
\newcommand{\sheafHom}{\mathcal{H}om}
\newcommand{\Sym}{\mathrm{Sym}}
\newcommand{\M}{\mathcal{M}}
\newcommand{\Mbar}{\overline{\mathcal{M}}}
\renewcommand{\L}{\mathcal{L}}
\newcommand{\smallfamily}[3]{\begin{array}{c}\xymatrix@=1em{{#1}\ar[d]^{#3} \\ {#2}}\end{array}}
\newcommand{\specialcell}[1]{\ifmeasuring@#1\else\omit$\displaystyle#1$\ignorespaces\fi}
\newcommand{\notationConvention}{
\textbf{Notation Conventions for Main Spaces}:
We will use the following simplifying notation for the key spaces:
\begin{enumerate}
\item$\M := \Mbar_g (X,\part)$ from definition \ref{relative_SM_definition} with $\bm{\pi}: \calC \rightarrow \M$ the universal curve.
\item$\M^r := \Mbar^{r}_g (X,\part)$ from definition \ref{RSM_definition} with $\bm{\pi}^r: \calC^r \rightarrow \M^r$ the universal ($r$-twisted) curve.
\item$\M^{1/r} := \Mbar^{_{1/r}}_g (X,\part)$ from definition \ref {M1/r_main_definition} with $\bm{\pi}^{1/r}: \calC^{1/r} \rightarrow \M^{1/r}$ the universal ($r$-twisted) curve.
\item$\frakM := \frakM_{g,l}$ and $\frakM^r := \frakM^{r}_{g,l}$ from defintion \ref{r-twisted_curves_definition}.
\end{enumerate}
The natural forgetful morphisms are denoted by
\begin{align*}
\xymatrix{
\M^{\frac{1}{r}}\ar[r]^{\ForMapDRtoRSM} &\M^r \ar[r]^{\ForMapRSMtoSM}& \M.
}
\end{align*}

We will also denote by $\calR$, $\L$ and  $\bm{\delta}:\O_{\calC}\rightarrow \calR$ the bundles and morphism on $\calC^{1/r}$ pulled back from $\calC^r$.  Lastly, $\bm{\sigma}:\O_{\calC}\rightarrow \L$ is the morphism pulled back from $\mathsf{Tot} \bm{\pi}_*\L$.
}
\begin{document}
\setlength{\parindent}{0cm}

\title[$r$-ELSV via Localisation using Stable Maps with Divisible Ramification]{The $r$-ELSV Formula via Localisation on the Moduli Space of Stable Maps with Divisible Ramification}

\author{Oliver Leigh}
\address{Oliver Leigh, Matematiska institutionen, Stockholms universitet, 106 91 Stockholm, Sweden}
\curraddr{}
\email{oliver.leigh@math.su.se}
\thanks{}

\subjclass[2010]{14D23, 14H10, 14N35}

\keywords{virtual localisation; ELSV formula; moduli spaces; stable maps; twisted curves; spin structures; Hurwitz numbers}
\date{}
\dedicatory{}

\begin{abstract}
The moduli space of stable maps with divisible ramification uses $r$-th roots of a canonical ramification section to parametrise stable maps whose ramification orders are divisible by a fixed integer $r$. In this article, a virtual fundamental class is constructed while letting domain curves have a positive genus; hence removing the restriction of the domain curves being genus zero. We apply the techniques of virtual localisation and obtain the $r$-ELSV formula as an intersection of the virtual class with a pullback via the branch morphism. 
\end{abstract}

\maketitle

\vspace{-0.7cm}
\section*{Introduction}
For a smooth curve $X$, the moduli space of stable maps with divisible ramification was introduced in \cite{Leigh_Ram} as a natural compactification of the sub-moduli space
\[
\M^{_{1/r}}_g(X, d) = \Big\{~ \big[f:C\rightarrow X \big] \in \M_g(X, d)~\Big|~  \mbox{$R_f = r\cdot D$ for some $D\in\mathrm{Div}(C)$} ~\Big\} /\sim 
\]
 of $\M_{g}(X,d)$ where $R_f$ denotes the ramification divisor of $f$. \\

The compactification of \cite{Leigh_Ram} agrees with the extended concept of ramification for stable maps introduced in \cite{FantechiPand} and \cite{GraVakil}. This extended concept is based on the observation that, for smooth curves, the ramification divisor arises from the differential map  $df: f^*\Omega_{X} \rightarrow \Omega_C$. When the domains are nodal, the differential map can be combined with the natural morphism $\Omega_C \rightarrow \omega_C$ to obtain a morphism 
\begin{align}
\delta: \O_C \longrightarrow \omega_C \otimes f^* \omega^\vee_{X}.  \label{delta_defn_1}
\end{align}
The extended concept of ramification is determined by $\delta$ and it is shown in  \cite{Leigh_Ram} that the concept of \textit{divisibility of ramification order by $r$} is equivalent to requiring $r$-th roots of $\delta$.\\

In order to control the $r$-th roots, it is convenient to consider a moduli space with slightly more information. Hence, following \cite{Leigh_Ram}, we will refer to the following moduli space as \textit{the moduli space of stable maps with divisible ramification}. \\

\vspace{-0.2cm}
\begin{mainDefinition}\label{mainModuliDefinition} For $g \geq 0$ and $d>0$ denote by $\Mbar^{_{1/r}}_g(X, d)$ the moduli stack that parameterises $(f: C\rightarrow X,\,\, L,\,\, e:L^{\otimes r}\overset{\sim}{\rightarrow}\omega_C \otimes f^*\omega_{X}^\vee ,\,\, \sigma:\O_C\rightarrow \L)$ where:
				\begin{enumerate}
					\item $C$ is a $r$-prestable curve of genus $g$ (a stack such that the coarse space $\overline{C}$ is a prestable curve of genus $g$, where points corresponding to nodes of $\overline{C}$ are balanced $r$-orbifold points, and ${C}^{\mathrm{sm}} \cong \overline{C}^{\mathrm{sm}}$).
					\item $f$ is a morphism such that the induced morphism $\overline{f}:\overline{C}\rightarrow X$ on the coarse space is a stable map parametrised by $\Mbar_{g}(X,d)$.
					\item  $L$ is a line bundle on $C$ and $ e: L^{\otimes r} \overset{\sim}{\rightarrow} \omega_C \otimes f^* \omega^\vee_{X}$  is an isomorphism.
					\item $\sigma: \O_C \rightarrow L$ is a section such that $e(\sigma^r) = \delta$, where $\delta$ is defined in (\ref{delta_defn_1}).\\
				\end{enumerate}
					
\end{mainDefinition}

\begin{mainRemark}
This article will be primarily concerned with the relative version of the space from definition \ref{mainModuliDefinition}. We denote this by $\Mbar^{_{1/r}}_g(X, \part)$ where $\part$ is an \textit{ordered} partition. However, we will leave the technicalities of the relative version until section \ref {background_section}.\\
\end{mainRemark}

$\Mbar^{_{1/r}}_g(X, \part)$ is a proper Deligne-Mumford stack which is non-empty only when $r$ divides $2g-2+l(\part)+|\part|(1-2g_X)$ \cite[Thm. A]{Leigh_Ram}. The morphism to the ``normal'' moduli space of stable maps (i.e. forget the $r$-th root and $r$-twisted structures), 
\begin{align} \label{main_forgetul_morphism_intro}
\Mbar^{_{1/r}}_g(X, \part) \longrightarrow \Mbar^{}_g(X, \part),
\end{align}
is both flat and of relative dimension $0$ onto its image. \\

The moduli space $\Mbar^{_{1/r}}_g(X, \part)$ is often not equidimensional and requires a virtual fundamental class to perform intersection theory. Also, in the case when $X=\P^1$ the natural $\C^*$-action on $\P^1$ induces an action on the moduli space. A virtual class in the $\C^*$-equivariant setting is a powerful computational tool. In \cite{Leigh_Ram} a virtual class is constructed for the case $g=0$. The following theorem extends this result to the equivariant setting and to include the case when $g>0$.\\

\begin{mainTheorem} \label{POT_theorem}
 $\Mbar^{_{1/r}}_g(\P^1, \part)$ has a natural $\C^*$-equivariant perfect obstruction theory giving a virtual fundamental class of dimension $\frac{1}{r}(2g-2+l(\part)+|\part|)$.\\
\end{mainTheorem}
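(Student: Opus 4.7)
I would build the perfect obstruction theory in stages via the factorisation $\M^{1/r} \xrightarrow{\ForMapDRtoRSM} \M^r \xrightarrow{\ForMapRSMtoSM} \frakM^r$, mirroring the classical stable-map construction but incorporating the two new structures: the $r$-th root $\L$ and the constrained section $\bm{\sigma}$ with $e(\bm{\sigma}^r)=\bm{\delta}$. Equivariance under $\C^*$ will be automatic, since the $\C^*$-action on $\P^1$ enters only through the equivariant bundles $f^*T_{\P^1}$ and $f^*\omega_{\P^1}^\vee$ on the universal curve.

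For the first factor $\M^r\to\frakM^r$, one simply pulls back the classical two-term stable-map POT along $\ForMapRSMtoSM$, since the datum $(L,e)$ of an $r$-th root is a finite \'etale datum on top of an $r$-twisted stable map; the resulting relative obstruction complex is $(R\bm{\pi}^r_* f^*T_{\P^1})^\vee$.

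For the second factor $\ForMapDRtoRSM$, I would exhibit $\M^{1/r}$ as a fibre product over $\M^r$. Let $\mathtt{Sect}=\Tot \bm{\pi}^r_*\L$ be the cone of sections of $\L$ over $\M^r$, together with its natural $r$-th-power morphism $\mathtt{Sect}\to\Tot \bm{\pi}^r_*\calR$, and let the canonical $\bm{\delta}$ define a section $\M^r\to\Tot\bm{\pi}^r_*\calR$. Then $\M^{1/r}=\mathtt{Sect}\times_{\Tot\bm{\pi}^r_*\calR}\M^r$, and the deformation theory of this fibre product produces a two-term relative POT $E^\bullet_{\M^{1/r}/\M^r}$ sitting in a distinguished triangle
\[
(R\bm{\pi}^{1/r}_*\calR)^\vee[-1]\longrightarrow E^\bullet_{\M^{1/r}/\M^r} \longrightarrow (R\bm{\pi}^{1/r}_*\L)^\vee,
\]
whose connecting arrow is dual to multiplication by $r\bm{\sigma}^{r-1}:\L\to\calR$. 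Combining these two obstruction theories via the standard exact triangle of cotangent complexes $\ForMapDRtoRSM^*\bbL_{\M^r/\frakM^r}\to \bbL_{\M^{1/r}/\frakM^r}\to \bbL_{\M^{1/r}/\M^r}$ yields an absolute POT on $\M^{1/r}\to\frakM^r$, and a Riemann--Roch computation on the universal $r$-twisted curve evaluates its virtual dimension to $\tfrac{1}{r}(2g-2+l(\part)+|\part|)$.

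\textbf{Main obstacle.} The technical heart is constructing the relative POT for $\ForMapDRtoRSM$ and verifying its perfect amplitude, because the linearisation $r\bm{\sigma}^{r-1}$ degenerates precisely along the ramification locus of $f$, which is the geometrically interesting locus. Ensuring that Behrend--Fantechi's $h^0$-isomorphism and $h^{-1}$-surjectivity axioms survive there requires a careful (possibly derived) analysis of the fibre product $\mathtt{Sect}\times_{\Tot\bm{\pi}^r_*\calR}\M^r$. The $\C^*$-equivariance should then track through all steps by observing that the cone $\Tot\bm{\pi}^r_*\L$ and the section $\bm{\delta}$ are canonically $\C^*$-equivariant objects on $\M^r$.
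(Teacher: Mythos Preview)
Your plan is essentially the paper's own strategy: it builds the absolute equivariant perfect obstruction theory on $\M^{1/r}$ by splicing the pulled-back stable-map theory on $\M^r$ with a relative theory for $\ForMapDRtoRSM$ obtained from the fibre-product presentation $\M^{1/r}=\Tot\bm{\pi}_*\L\times_{\Tot\bm{\pi}_*\L^r}\M^r$, exactly as you describe (the relative theory and its perfect amplitude are in fact imported from the companion paper \cite{Leigh_Ram}). Two small points to adjust: because we are in the \emph{relative} stable-maps setting the base for the first step is $\frakM^r\times\calT$ rather than $\frakM^r$ (and the relevant tangent sheaf is the log tangent of $\P^1$), and the ``standard'' splicing of the two theories into an absolute one is not quite formal---the paper isolates this as a separate lemma producing the required commuting square before taking cones.
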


For the rest of the introduction we set $m :=m(\P^1,g,\part) := \frac{1}{r}(2g-2+l(\part)+|\part|)$. It is shown in \cite{Leigh_Ram} that there is a natural morphism of stacks
\[
\mathtt{br} : \Mbar^{_{1/r}}_g(\P^1, \part) \longrightarrow \Sym^{m} \P^1 
\]
which is compatible with the branch morphism $\bm{br}$ of \cite{FantechiPand} via the $r$-th diagonal morphism $\Delta$ which is defined by $\sum_i x_i \mapsto \sum_i rx_i$:
\[
\xymatrix@R=1.25em{
 \Mbar^{_{1/r}}_g(\P^1,\part) \ar[r]^{\hspace{0.1em}\mathtt{br}}\ar[d] &\Sym^{m} \P^1 \ar[d]^{\Delta}\\
 \Mbar_g(\P^1,\part) \ar[r]^{\hspace{0.1em}\bm{br}} &\Sym^{rm} \P^1 
}
\]
Specifically, $\mathtt{br}$ takes a moduli point to its branch divisor divided by $r$.\\

The methods of \cite{Kontsevich, GraVakil_Loc} give an explicit description of the $\C^*$-fixed loci of $\Mbar^{}_g(\P^1, \part)$. Furthermore, we will identify the $\C^*$-fixed loci of $\Mbar^{_{1/r}}_g(\P^1, \part)$ as pullbacks of the $\C^*$-fixed loci of $\Mbar^{}_g(\P^1, \part)$ via the forgetful morphism of (\ref{main_forgetul_morphism_intro}). \\

The $\C^*$-fixed loci of $\Mbar^{_{1/r}}_g(\P^1, \part)$ can be characterised by their image under $\mathtt{br}$. Following \cite{GraVakil_Loc} we call the fixed loci corresponding to the point $[m\cdot(0)] \in \Sym^{m}\P^1$ the \textit{simple fixed locus} and denote this locus by $\fixedDR$. This contains the $\C^*$-fixed maps where there is no degeneration of the target at $\infty$. \\

\begin{mainTheorem}\label{VL_theorem}
The virtual localisation formula of \cite{GraberPand,ChangKiemLi} can be applied to $\Mbar^{_{1/r}}_g(\P^1,\part)$. The simple fixed locus $\fixedDR$ is smooth of dimension $3g-3+l(\part)$ and there is a morphism of degree $ (\part_1\cdots \part_{l(\part)})^{-1}$ to the moduli space of $r$-spin curves 
\[
\bm{b}: \fixedDR \longrightarrow \Mbar^{\frac{1}{r}, \mathtt{a}}_{g,l(\part)}
\]
where $\mathtt{a}= (a_1,\ldots, a_{l(\part)})$ is a vector of reverse remainders with $a_i\in \{0,\ldots, r-1\}$ defined by $\part_i = \left\lfloor \frac{\part_i}{r} \right\rfloor r + (r-1 -a_i)$.\\

Moreover, the contribution to the localisation formula of $\big[\Mbar^{_{1/r}}_g(\P^1,\part)\big]^{\mathrm{vir}}$ by the virtual normal bundle of the simple fixed locus is: 
\[
\frac{1}{e\left(\mathrm{N}^{\mathrm{vir}}_{\fixedDR}\right)}
=
 r^{l+ 2g-2}
\left(\prod_{i =1}^{l} \frac{\part_i \left(\frac{\part_i}{r}\right)^{\left\lfloor \frac{\part_i}{r} \right\rfloor}}{\left\lfloor \frac{\part_i}{r} \right\rfloor!} \right)
 \frac{c_{\frac{r}{t}}\big(-\bm{b}^*R\bm{\rho}_* \L \big)}{\prod_{i=1}^{l}\left(1 - \frac{\part_i }{t}\, \bm{b}^*\psi_i\right)}
 \left(\frac{t}{r}\right)^{3g-3+l-m}
\]
where $\bm{\rho}$ and $\L$ are the universal curve and $r$-th root of $\Mbar^{1/r, \mathtt{a}}_{g,l}$, while $l:=l(\part)$   and  \,$t$ is the generator of the $\C^*$-equivariant Chow ring of a point.\\
\end{mainTheorem}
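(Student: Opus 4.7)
The plan is to apply the Graber-Pandharipande / Chang-Kiem-Li virtual localisation scheme using the $\C^*$-equivariant POT from Theorem A. First, I would verify the hypotheses for localisation. Because the forgetful morphism $\ForMapRSMtoSM \circ \ForMapDRtoRSM: \Mbar^{_{1/r}}_g(\P^1,\part) \to \Mbar_g(\P^1,\part)$ of (\ref{main_forgetul_morphism_intro}) is $\C^*$-equivariant and flat of relative dimension zero onto its image, the $\C^*$-fixed loci of $\Mbar^{_{1/r}}_g(\P^1,\part)$ are obtained as pullbacks of the Kontsevich-Graber-Vakil decomposition of the $\C^*$-fixed loci of $\Mbar_g(\P^1,\part)$, and the POT of Theorem A inherits a compatible fixed-moving splitting.

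Next I would describe $\fixedDR$ explicitly. A $\C^*$-fixed map in the simple locus has no target degeneration over $\infty$, so its source is a contracted subcurve $C_0$ of arithmetic genus $g$ with $l := l(\part)$ orbifold marked points $x_1, \ldots, x_l$, together with rational tails at each $x_i$ mapping to $\P^1$ as $z \mapsto z^{\part_i}$. The $r$-th root $\L$ restricts on each tail to the unique $r$-th root of the ramification differential, and the balancing condition at the $r$-orbifold node forces $\L|_{C_0}$ to be an $r$-th root of $\omega_{C_0} \otimes \O_{C_0}(\sum_i a_i x_i)$, where the twisting $a_i$ is determined by $\part_i = r\lfloor \part_i/r\rfloor + (r-1-a_i)$. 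This assignment defines the morphism $\bm{b}$ to the moduli of $r$-spin curves. The degree $(\part_1 \cdots \part_l)^{-1}$ comes from the extra $\Z/\part_i$ of automorphisms of each tail that are killed by passage to the spin curve, and smoothness of $\fixedDR$ with dimension $3g-3+l$ is inherited from the étale nature of $\Mbar^{\frac{1}{r}, \mathtt{a}}_{g,l}$ over $\Mbar_{g,l}$.

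For the virtual normal bundle contribution I would decompose the moving part of the restricted POT into edge (rational tails), node, and vertex (contracted $C_0$) pieces in the style of \cite{GraVakil_Loc}. The edge analysis of a $\part_i$-fold $r$-twisted cover with its compatible $r$-th root produces the factor $\part_i(\part_i/r)^{\lfloor \part_i/r\rfloor}/\lfloor \part_i/r\rfloor!$ together with the powers of $r$; these are the divisibility-shifted analogues of the classical Hurwitz cover weights. Node smoothing yields $\prod_i(1 - \part_i \bm{b}^*\psi_i/t)^{-1}$, with $\part_i/t$ being the tangent weight at $0 \in \P^1$ pulled back through $z \mapsto z^{\part_i}$. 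The contracted vertex contributes the Hurwitz-Hodge-type factor $c_{r/t}(-\bm{b}^*R\bm{\rho}_*\L)$ from the moving part of $R\bm{\rho}_*\L$ over $C_0$. The global $r^{l+2g-2}$ packages the $r$-th root twist contributions from nodes and Hodge terms, while $(t/r)^{3g-3+l-m}$ is the dimension-balancing factor forced by the degree homogeneity of $1/e(\NorBun_{\fixedDR})$.

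The main obstacle will be the careful book-keeping at the balanced $r$-orbifold nodes joining each tail to $C_0$. The interaction between the $r$-twisted structure and the $r$-th root refines the standard Graber-Vakil cover contribution: $\part_i$ is replaced by $\part_i/r$ in the key product, the denominator becomes $\lfloor \part_i/r\rfloor!$, and the characters acquire twists by $a_i$. Verifying this refined edge-node formula, and in particular checking that the vertex piece assembles into a genuine pullback along $\bm{b}$ from $\Mbar^{\frac{1}{r}, \mathtt{a}}_{g,l}$, is where the divisibility of ramification enters non-trivially and is the technical heart of the $r$-ELSV formula.
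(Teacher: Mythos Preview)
Your proposal is broadly correct and follows the same localisation strategy as the paper, but the paper organises the computation differently in a way that matters for the book-keeping you flag as the main obstacle. Rather than decomposing the full perfect obstruction theory of $\M^{1/r}$ directly into edge/node/vertex pieces, the paper first uses the distinguished triangle
\[
\eF_{\ForMapDRtoRSM} \longrightarrow \eF_{\M^{1/r}} \longrightarrow L\ForMapDRtoRSM^*\eF_{\M^r} \longrightarrow \eF_{\ForMapDRtoRSM}[1]
\]
coming from the construction of the POT (Theorem~A), and computes fixed and moving parts of the two outer terms separately. The $\M^r$ term reduces essentially to the Graber--Vakil calculation with only a mild $r$-twisted modification at the flag nodes (each node smoothing weight picks up a factor of $1/r$); the genuinely new $r$-th root contributions---the factor $c_{r/t}(-\bm{b}^*R\bm{\rho}_*\L)$, the replacement of $\part_i!/\part_i^{\part_i}$ by $\lfloor\part_i/r\rfloor!/\part_i^{\lfloor\part_i/r\rfloor}$ on edges, and the delicate flag-node terms---are isolated in the relative piece $\eF_{\ForMapDRtoRSM}$, which is itself analysed via a normalisation sequence. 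A useful cancellation then occurs: the Hodge bundle appearing in the $\M^r$ vertex and the $\omega^{\log}$ term in the $\ForMapDRtoRSM$ vertex combine via Mumford's relation $c_s((\bm{\rho}_*\omega_{\bm{\rho}})^\vee)c_s(\bm{\rho}_*\omega_{\bm{\rho}})=1$. Your direct approach would have to rediscover this cancellation inside a single large edge/node/vertex computation.

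Two smaller points. First, the $r$-spin structure on $C_0$ is an $r$-th root of $\omega_{C_0}(-\sum_i a_i x_i)$, not $\omega_{C_0}(\sum_i a_i x_i)$; the sign matters for matching with the standard $r$-spin moduli space. Second, the degree $(\part_1\cdots\part_l)^{-1}$ of $\bm{b}$ is obtained in the paper by factoring through $\fixedRSM$: the forgetful map $\fixedDR\to\fixedRSM$ has degree $r^l$ (counting the $r$ choices of root section $\sigma$ on each tail, with care about which node automorphisms lift), the reverse clutching morphism to $\Mbar^{1/r,\mathtt{a}}_{g,l}\times\prod_i\calP^{1/r}_{\part_i}$ has degree $1$, and the projection kills the $\calP^{1/r}_{\part_i}\cong\calB(\mu_r\times\mu_{\part_i})$ factors of degree $(r\part_i)^{-1}$. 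Your heuristic about $\Z/\part_i$ automorphisms of the tails captures the $\part_i^{-1}$, but the $r$-powers require tracking the root section and the orbifold node automorphisms separately.
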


We can use the branch-type morphism and the virtual fundamental class to define the following natural Hurwitz-type formula
\[
H^r_{g,\part} :=\int_{\big[\Mbar^{_{1/r}}_g(\P^1, \part)\big]^{\mathrm{vir}}} \mathtt{br}^*[\, p_1+ \cdots + p_{m}] \label{hurwitz_integral}
\]
where $m=\frac{1}{r}(2g-2+l(\part)+|\part|)$ and $p_i\in\P^1$. We can choose an equivariant lift of the class $[\,p_1+ \cdots + p_{m}]$  that corresponds to the point $[m\cdot(0)] \in\Sym^m\P^1$. With this choice the class $\mathtt{br}^*[\,p_1+ \cdots + p_{m}]$ will vanish on the non-simple fixed loci. Hence we can apply the localisation formula of theorem \ref{VL_theorem}. Taking the non-equivariant limit of the resulting intersection gives the following formula. \\

\begin{mainTheorem}\label{rHurwitz_theorem}
There is an equality
\[
H^r_{g,\part}
=
 m!\, r^{m+ l+ 2g-2} 
 \left(
 \prod_{i=1}^{l} \frac{\left(\frac{\part_i}{r}\right)^{\lfloor \frac{\part_i}{r} \rfloor}}{\lfloor \frac{\part_i}{r} \rfloor!}
 \right)
~ \int_{\Mbar^{\frac{1}{r}, \mathtt{a}}_{g,l}} 
\frac{c(-R\bm{\rho}_* \mathcal{L})}{\prod_{j=1}^{l}(1 - \frac{\part_i}{r}\psi_j)}
\]
where $\bm{\rho}$ and $\L$ are the universal curve and $r$-th root of $\Mbar^{\frac{1}{r}, \mathtt{a}}_{g,l}$, while $\mathtt{a}= (a_1,\ldots, a_{l})$ is a vector with $a_i\in \{0,\ldots, r-1\}$ defined by $\part_i = \left\lfloor \frac{\part_i}{r} \right\rfloor r + (r-1 -a_i)$  and $l=l(\part)$.\\
\end{mainTheorem}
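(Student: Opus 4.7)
The plan is to apply virtual localisation to the equivariant virtual class of Theorem \ref{POT_theorem}, using the observation (recorded in the preamble) that the equivariant Thom class of the fixed point $[m\cdot(0)] \in \Sym^m\P^1$ furnishes a lift of $[\,p_1+\cdots+p_m]$ whose $\mathtt{br}$-pullback vanishes on every non-simple $\C^*$-fixed locus. Only the simple fixed locus $\fixedDR$ then contributes, and its contribution is described by Theorem \ref{VL_theorem}; the stated identity will follow by pushing forward via $\bm{b}$ and extracting the non-equivariant part.

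First I would evaluate the equivariant lift on $\fixedDR$. With $t$ the $\C^*$-weight associated to $0 \in \P^1$, the isomorphism $\Sym^m\P^1 \cong \P^m$ identifies $[m\cdot(0)]$ with a fixed point whose tangent weights are $\pm t, \pm 2t, \ldots, \pm mt$ (the sign depending on the orientation convention), so the equivariant Thom class restricts there to the scalar $m!\,t^m$; since $\mathtt{br}(\fixedDR) = \{[m\cdot(0)]\}$, its $\mathtt{br}^*$-pullback is the constant $m!\,t^m$ on $\fixedDR$. Substituting the normal-bundle formula from Theorem \ref{VL_theorem} and pushing forward through $\bm{b}$ — whose degree $(\part_1\cdots\part_l)^{-1}$ exactly cancels the factor $\prod \part_i$ in the numerator of that formula — yields
\[
H^r_{g,\part} = m!\, t^m\, r^{l+2g-2} \prod_{i=1}^l \frac{(\part_i/r)^{\lfloor \part_i/r \rfloor}}{\lfloor \part_i/r \rfloor!} \left(\frac{t}{r}\right)^{3g-3+l-m} \int_{\Mbar^{\frac{1}{r}, \mathtt{a}}_{g,l}} \frac{c_{r/t}(-R\bm{\rho}_* \L)}{\prod_i \bigl(1 - \tfrac{\part_i}{t}\,\psi_i\bigr)}.
\]

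It remains to take the non-equivariant limit. Since $\dim \Mbar^{\frac{1}{r}, \mathtt{a}}_{g,l} = 3g-3+l$, only the degree-$(3g-3+l)$ part of the integrand survives. Expanding $c_{r/t}(V) = \sum_k c_k(V)(r/t)^k$ and each geometric series $(1-\part_i\psi_i/t)^{-1} = \sum_{j_i}(\part_i/t)^{j_i}\psi_i^{j_i}$, every surviving monomial carries exactly $t^{-(3g-3+l)}$, which cancels the $t$-contribution of the prefactor $t^m(t/r)^{3g-3+l-m}$, leaving a residual $r^{m-(3g-3+l)}$. Combining this with the $r^{l+2g-2}$ out front and the $r^{\sum k}$ from $c_{r/t}$ produces the total $r$-power $r^{m-g+1+\sum k}$ in each monomial, which is precisely what the right-hand side of the statement produces by expanding $r^{m+l+2g-2} \int c(-R\bm{\rho}_*\L)/\prod_i(1-\part_i\psi_i/r)$ term by term (using $\sum j_i = 3g-3+l-\sum k$). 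I do not expect a substantive obstacle here: the geometric content is already packaged in Theorems \ref{POT_theorem} and \ref{VL_theorem}, so the remaining work is the careful combinatorial tally of equivariant weights, Chern orders, and push-forward degrees just outlined.
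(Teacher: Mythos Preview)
Your proposal is correct and follows essentially the same route as the paper: choose an equivariant lift of the point class in $\Sym^m\P^1$ that kills the non-simple fixed loci, evaluate it as $m!\,t^m$ on $\fixedDR$, insert the normal-bundle formula of Theorem~\ref{VL_theorem}, push forward along $\bm{b}$ (absorbing the $\prod_i\part_i$), and take the non-equivariant limit. The only cosmetic difference is that the paper writes the lift explicitly as a product $\prod_{n=1}^m c_1(\calH_n)$ of linearised hyperplane classes (with $\calH_n$ having weight $0$ at $h_n$), whereas you phrase it as the equivariant Thom class of $h_0=[m\cdot(0)]$; these agree on every fixed point, and your more detailed bookkeeping of the $t$- and $r$-powers in the limit is simply a spelled-out version of what the paper leaves implicit.
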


The intersection formula in theorem \ref{rHurwitz_theorem} is known as the $r$-ELSV formula. It appeared in \cite{SSZ_rELSV} as a conjecture relating to the stationary Gromov-Witten theory of $\P^1$
\begin{align*}
\num \label{Zvonkines_rELSV_Formula}(r!)^m \int&_{\big[\Mbar_{g,m}(\P^1,\part)\big]^{\mathrm{vir}}}~ \psi_1^r \mathrm{ev}_1^*[\mathrm{pt}] \cdots  \psi_m^r \mathrm{ev}_m^*[\mathrm{pt}]\\
&=
 m!\, r^{m+l+ 2g-2} 
 \left(
 \prod_{i=1}^{l} \frac{\left(\frac{\part_i}{r}\right)^{\lfloor \frac{\part_i}{r} \rfloor}}{\lfloor \frac{\part_i}{r} \rfloor!}
 \right)
~ \int_{\Mbar^{\frac{1}{r}, \mathtt{a}}_{g,l(\part)}} 
\frac{c(-R\bm{\rho}_* \mathcal{L})}{\prod_{j=1}^{l}(1 - \frac{\part_i}{r}\psi_j)}. 
\end{align*}
This formula has since been proved using the methods of Chekhov-Eynard-Orantin topological recursion in \cite{BKLPS,DKPS_Loop}. In fact, the proof in \cite{DKPS_Loop} is for a generalised version, conjectured in \cite{KLPS}, involving a $q$-orbifold $\P^1$. A version of this formula for one-part double Hurwitz numbers was recently considered in   \cite{DL_Double}.\\

In the sequel to this article \cite{Leigh_Deg} we will provide a geometric proof of the equality from (\ref{Zvonkines_rELSV_Formula}) using the methods of degenerated targets from \cite{li2} and the Gromov-Witten/Hurwitz correspondence from \cite{OkounkovPand_Comp}. \\

\hypertarget{notation_main}{\textbf{Notation}} 
Unless otherwise stated, the following notation will be used in this article:
\begin{itemize}[leftmargin=2em]
\item All stacks, schemes and varieties are  over $\C$.
\item $X$ is a smooth one dimensional variety. 
\item $\part$ is an \textit{ordered} partition with length $l:=l(\part)$ and $|\part|>0$. 
\item $m:= m(X,g,\part) := \frac{1}{r}(2g-2+l+|\part|(1-2g_X))$.
\item If $a\in \Z_{\geq0}$ then  $\left<\tfrac{a}{r} \right>$ is the remainder after dividing $a$ by $r$ (i.e. $a = \left\lfloor \tfrac{a}{r} \right\rfloor r + \left< \tfrac{a}{r} \right>$).
\end{itemize}

\tableofcontents
\phantomsection
\addcontentsline{toc}{section}{Contents}

\vspace{-0.7cm}
\section*{Acknowledgements} 
The author wishes to thank Dan Petersen, Jim Bryan and Paul Norbury for useful conversations which contributed to this article and Olof Bergvall for providing feedback on an early version of this manuscript.  \\

\section{Background}\label{background_section}

\subsection{Review of the Moduli Space of Relative Stable Maps}

Let $\part$ be an \textit{ordered} partition and fix $x\in X$. Relative stable maps parameterise maps where the pre-image of $x$ lies in the smooth locus of $C$ and where the map has monodromy above $x$  locally given by $\part$. To obtain a proper space we follow \cite{li1} and allow the target to degenerate in a controlled manner by allowing $X$ to sprout a chain of $\P^1$'s.\\

\begin{re}[\textit{Degenerated Targets and Relative Stable maps}]

For a smooth curve X, we can define the $i$-th degeneration $X[i]$ inductively from $X[0]:=X$ by:
\begin{enumerate}
\item $X[i+1]$ is given by the union $X[i] \cup \P^1$ meeting at a node $N_{i+1}$.
\item The node $N_{1}$ is at $x \in X$. For $i>0$ the node $N_{i+1}$ is in the $i$th component of $X[i+1]$, i.e. the node is not in $X[i-1] \subset X[i+1]$.
\end{enumerate}
Then a \textit{degenerated target} is a pair $(T, t)$ where $T=X[i]$ for some $i\geq 0$ and $t$ is a geometric point in the smooth locus of $i$th component of $T$.  \\

A genus $g$ stable map to $X$ relative to $(\part, x)$ is given by 
\[
\Big(~ h:C \longrightarrow T,~ p: T \longrightarrow X,~ q_1,\ldots,q_{l} , t   ~\Big)
\]
where $(C, q_i)$ is a $l$-marked genus $g$ prestable curve, $(T, t)$ is a degenerated target, $h$ is a morphism sending $q_i$ to $t$ and $p$ is a morphism sending $t$ to $x$ such that:
\begin{enumerate}
\item There is an equality of divisors on $C$ given by $h^{-1}(t) = \sum \part_i q_i$.
\item We have $p|_{X}$ is an isomorphism and $p|_{T\setminus X}:T\setminus X\rightarrow \{x\}$ is constant. 
\item The pre-image of each node $N$ of $T$ is a union of nodes of $C$. At any such node $N'$ of $C$, the two branches of $N'$ map to the two branches of $N$, and their orders of branching are the same.
\item The data has finitely many automorphisms (an automorphism is a a pair of isomorphisms $a:C\rightarrow C$ and $b:T\rightarrow T$ taking $q_i$ to $q_i$ and $t$ to $t$ such that $h\circ a = b\circ h$ and $p = p\circ b$). \\
\end{enumerate}
\end{re}

\begin{definition}[\textit{Moduli Space of Relative Stable Maps}]\label{relative_SM_definition} The moduli stack of genus $g$ stable maps relative to $(\part,x)$ is denoted $\Mbar_g(X,\part)$. It is the groupoid with:
\begin{enumerate}
\item Objects over a scheme $S$ given by:
\[
\xi  = \left(~
\hspace{-0.6em}\begin{array}{c}\xymatrix@=1em{{C}\ar[d]_{\pi} \\ {S} \ar@/_/[u]_{q_i}}\end{array}\hspace{-0.6em}
,~~ \hspace{-0.6em}\begin{array}{c}\xymatrix@=1em{{T}\ar[d]_{\pi'} \\ {S} \ar@/_/[u]_{t}}\end{array}\hspace{-0.6em}
,~~h: C \rightarrow T
,~~ p :T \rightarrow X
~\right)
\]
where $\pi$ and $\pi'$ are proper flat morphisms, $h$ is a morphism over $S$ and for each geometric point $z\in S$ we have $\xi_z$ is a genus $g$ stable map relative to $(\part,x)$. Furthermore, we require that in a neighbourhood of a node of $C_z$ mapping to a singularity of $T_z$ we can choose \'{e}tale-local coordinates on $S$, $C$ and $T$ with charts of the form $\Spec\,R$, $\Spec\,R[u,v]/(uv-a)$ and $\Spec\,R[x,y]/(xy-b)$ respectively such that the map is of the form $x \mapsto \alpha u^k$ and $y \mapsto \alpha v^k$ with $\alpha$ and $\beta$ units. 
\item
Morphisms $\xi_1 \rightarrow \xi_2$ between two appropriately labelled objects are given by pairs of cartesian diagrams
\[
\begin{array}{c}
\xymatrix@=1.5em{
 C_1\ar[d]_{\pi_1}\ar[r]_{a} &  C_2\ar[d]^{\pi_2}\\
S_1 \ar[r]^{a'} & S_2
}\end{array}
\hspace{1.5cm}
\begin{array}{c}
\xymatrix@=1.5em{
T_1\ar[d]_{\pi'_1}\ar[r]_{b} &  T_2\ar[d]^{\pi'_2}\\
S_1 \ar[r]^{b'} & S_2
}\end{array}
\]
that are compatible with the other data (i.e. we have $a\circ q_{1,i} = q_{2,i} \circ a'$, $b\circ t_{1} = t_{2} \circ b'$, $b\circ h_1 = h_2\circ a$ and $p_1 = p_2\circ b$).\\
\end{enumerate}
\end{definition}

\begin{re}[\textit{Universal Objects of $\Mbar_{g}(X,\part)$}]\label{universal_objects_of_moduli_relative_stable_maps}
Use the notation $\M := \Mbar_{g}(X,\part)$. There is a smooth Artin stack $\calT$ which parametrises the degenerated targets and a morphism which forgets the map data
\[
\ForMapSM = (\ForMapSM_{\frakM}, \ForMapSM_{\calT} ): \M \rightarrow \frakM_{g,l}\times  \calT. 
\]

We have universal curves $\bm{\pi}: \calC \rightarrow \M$ and $\bm{\pi}_\calT: \calC_{\calT} \rightarrow \calT$ along with universal maps $\bm{h}:\calC\rightarrow \calC_{\calT}$ and $\bm{p}:\calC_{\calT}\rightarrow X$ fitting into the following commuting diagram:
\[\xymatrix@R=1.5em{
\calC \ar[r]_{\bm{h}} \ar[d]_{\bm{\pi}}  &  \ar[d]^{\bm{\pi}_{\calT} } \calC_{\calT} \ar[r]_{\bm{p}} & X\\
\M \ar[r]^{\ForMapSM_{\calT}} & \calT & 
}\]
There also  universal sections defining the marked points given by $\bm{q}_i:\M\rightarrow \calC$ and $\bm{t}:\calT\rightarrow \calC_{\calT}.$ Moreover for convenience we define  $\bm{f}:= \bm{p}\circ \bm{h}$ to be the universal map $\bm{f}:\calC \rightarrow X$. 
\end{re}

\begin{definition}[\textit{Ramification Bundle}]\label{Ram_bun_def}
Using the notation from \ref{universal_objects_of_moduli_relative_stable_maps}, there is a natural bundle defined on the universal curve $\calC$ which we call the \textit{ramification bundle} and denote by
\[
\calR := \omega_{\bm{\pi}}^{\log} \otimes \bm{f}^*(\omega_{X}^{\log})^\vee, 
\] 
where we have also denoted $\omega_{\bm{\pi}}^{\log} = \omega_{\bm{\pi}}\big(\mbox{$\sum$} \bm{q}_i\big)$ and $\omega_{X}^{\log} = \omega_{X}(x)$. For a family $\xi\in \Mbar_{g}(X,\part)$ with $f:=\bm{f}_\xi$ we will use the notation $R_f:= \calR_\xi$. \\
\end{definition}

\begin{re}[\textit{Canonical Ramification Section}]\label{Can_ram_sec}
Using the notation from \ref{universal_objects_of_moduli_relative_stable_maps}, it is shown in \cite{Leigh_Ram} that (following choices of global sections defining the divisors $\bm{q}_i \in \calC$, $\bm{t} \in \calC_{\calT}$ and $x\in X$)  there is a morphism 
\[
\bm{\delta}: \O_{\calC} \longrightarrow \calR
\]
called the \textit{canonical ramification section} which has the following properties at each geometric point  $\xi = \big( h:(C,q_i) \longrightarrow (T,t) ,~ p: (T,t) \longrightarrow (X,x) \big)$ in  $\M$ with $f= p\circ h$ and $\delta = \bm{\delta}_\xi$:
\begin{itemize}[leftmargin=1.25em]
\item[]
  Let $B\subset X$ be the closed subset containing only $x$ and the nodes of $X$. If $D:=f^{-1}(B) \subset C$, then:
\begin{enumerate}
\item $\delta$ restricted to $C\setminus D$ is the natural morphism $\O_{C\setminus D} \rightarrow \omega_{C\setminus D}\otimes (f|_{C\setminus D})^* \omega_{X\setminus B}^\vee$  of \cite[Lemma 8]{FantechiPand}.
\item $\delta$ is an isomorphism locally at $D$. 
\end{enumerate}
\end{itemize}
In particular, $\delta$ describes the ramification behaviour of $f$ away from the pre-images of nodes of $X$ and away from the relative divisor.  \\
\end{re}

\begin{theorem}[\textit{Branch Morphism} \cite{FantechiPand}]\label{FP_branch_morphism_thm} There is a morphism of stacks 
\[
\bm{br} : \Mbar^{}_g(X, \part) \longrightarrow \Sym^{rm} X 
\]
defined at each geometric point in $\xi \in \Mbar_{g}(X,\part)$ with $C:=\calC_\xi$ and $f:= \bm{f}_{\xi}$ to be
\[
\mathrm{Div}\Big(Rf_*\big[ \O_{C} \overset{\delta_{\xi}}{\longrightarrow}  \calR_{\xi} \big] \Big) 
\]
where $\mathrm{Div}$ is Mumford's divisor associated to the determinant of a perfect complex whose homology is not supported on any points of depth 0 \cite[\S 5.3]{GIT}. \\

On the substack $\M^{}_g(X, \part)$ the morphism $\bm{br}$ takes a point $\xi$ to the branch divisor of $\bm{f}_\xi$ minus the sub-divisor supported at $x\in X$. \\
\end{theorem}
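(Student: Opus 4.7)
The plan is to construct $\bm{br}$ by applying Mumford's divisor construction from \cite[\S5.3]{GIT} to the derived pushforward of the two-term complex $K^{\bullet} := [\O_{\calC} \xrightarrow{\bm{\delta}} \calR]$ along the universal map $(\bm{f},\bm{\pi}):\calC \to X\times\M$. Since $\bm{\pi}$ is proper and flat of relative dimension $1$ and $K^{\bullet}$ is a perfect complex of line bundles, $R(\bm{f},\bm{\pi})_* K^{\bullet}$ is automatically a perfect complex on $X \times \M$.

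The first step to verify is that this perfect complex has cohomology supported in codimension at least one on $X\times\M$, so that Mumford's construction produces an honest effective Cartier divisor. This is where Remark \ref{Can_ram_sec} is essential: on every geometric fibre $C = \calC_\xi$ the section $\bm{\delta}_\xi$ agrees with the natural map $\O_{C\setminus D} \to \omega_{C/X}|_{C\setminus D}$ outside the finite locus $D = f^{-1}(B)$, and is an isomorphism of line bundles locally around $D$. Consequently the cokernel of $\bm{\delta}_\xi$ is torsion of finite length on $C$, and so $R\bm{f}_* K^{\bullet}$ has cohomology on $X\times\{\xi\}$ supported on finitely many closed points. A fibrewise Riemann-Roch computation gives
\[
\chi(\calR)-\chi(\O_C) \;=\; \deg\calR \;=\; (2g-2+l) - |\part|(2g_X-1) \;=\; rm,
\]
and by flatness this is the relative degree of the resulting Mumford divisor over $\M$, defining the desired morphism $\bm{br}:\M \to \Sym^{rm} X$.

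For the identification on the non-degenerated substack $\M_g(X,\part)$, where $T=X$ and $\bm{f}=\bm{p}$, the identity $f^*\O_X(x) \cong \O_C\bigl(\sum \part_i q_i\bigr)$ gives an isomorphism
\[
\calR \;\cong\; \omega_{C/X}\otimes \O_C\bigl(-\sum(\part_i-1) q_i\bigr),
\]
under which $\bm{\delta}$ factors through the standard differential map $\O_C \to \omega_{C/X}$ twisted by vanishing of order $\part_i-1$ at each $q_i$. The Mumford divisor of the untwisted complex equals the classical branch divisor of $f$, and the twist subtracts precisely the subdivisor supported at $x$.

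The main obstacle is controlling the local behaviour of $\bm{\delta}$ near preimages of nodes of the degenerated target $T$ and near preimages of the relative point $x$: these are the only loci where the classical differential map fails to exist, and one must invoke the local-isomorphism statement of Remark \ref{Can_ram_sec}(ii) to ensure that $K^{\bullet}$ contributes no spurious cohomology there. This local computation, carried out in \cite{Leigh_Ram}, guarantees that $R\bm{f}_* K^{\bullet}$ records only the genuine ramification of $f$ away from the relative divisor, and that the resulting morphism $\bm{br}$ is independent of the choices of defining sections.
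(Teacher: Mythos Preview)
Your proposal is correct and follows precisely the Fantechi--Pandharipande construction that the paper is citing; note that the paper itself does not supply a proof of this theorem but defers to \cite{FantechiPand}, with the subsequent remark explaining that the argument carries over verbatim to the relative setting once one uses the canonical ramification section $\bm{\delta}$ of \ref{Can_ram_sec}. Your outline is exactly this adaptation, including the key local input from \ref{Can_ram_sec}(ii) ensuring the complex has no spurious support over nodes of $T$ or over $x$.
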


\begin{remark}
The construction of $\bm{br}$ in \cite{FantechiPand} is given for the space of absolute stable maps $\Mbar^{}_g(X, d)$. However the proofs all still work when applied to case of $\Mbar^{}_g(X, \part)$ using the canonical ramification section $\bm{\delta}$ of \ref{Can_ram_sec}. \\
\end{remark}

\begin{re}[\textit{A Natural $\C^*$-action on $\P^1$}] We define (for now and the rest of this article) an action of $\C^*$ on $\P^1$, by identifying $\P^1 := \P(\C^2)$ and letting $\C^*$ act on $\C^2$ with weights $0,1$. Explicitly, for $c\in \C^*$ and $(x_0,x_1)\in \C^2$ we define the action by 
\begin{align}
c\cdot (x_0, x_1) = ( x_0, c x_1).
\end{align}
This canonically induces a $\C^*$-action on $\P^1$ which we use throughout the rest of this article. The $\C^*$-fixed points on $\P^1$ of this action are identified as
\[
0 := [0:1] 
\hspace{1cm}
\mbox{and}
\hspace{1cm}
\infty := [1:0].
\]
The weights of the tangent spaces to $\P^1$ at these points are $-1$ and $1$ respectively. 
\end{re}

\begin{re}[\textit{A Natural $\C^*$-action on $\Mbar_g(\P^1,\part)$}]  \label{C*-action_relative_stable_maps_def}
We define  (for now and the rest of this article) an action of $\C^*$ on $\Mbar_g(\P^1,\part)$ by acting on the image of the map $c\cdot [f] = [c\cdot f]$ using the $\C^*$-action on $\P^1$. More precisely, the action is defined by the morphism of stacks
\begin{align*}
\C^*\times \Mbar_g(\P^1,\part) \longrightarrow  \Mbar_g(\P^1,\part)
\end{align*}
which maps a family over a scheme $S$
\[
\left(~\hspace{-0.5em}\begin{array}{c}\xymatrix@=1em{{S}\ar[d]_{a} \\ {\C^*} }\end{array}\hspace{-0.6em},~ \xi~\right) 
=
 \left(~
\hspace{-0.5em}\begin{array}{c}\xymatrix@=1em{{S}\ar[d]_{a} \\ {\C^*} }\end{array}\hspace{-0.6em}
,\hspace{-0.5em}\begin{array}{c}\xymatrix@=1em{{C}\ar[d]_{\pi} \\ {S} \ar@/_/[u]_{q_i}}\end{array}\hspace{-0.6em}
,\hspace{-0.5em} \begin{array}{c}\xymatrix@=1em{{T}\ar[d]_{\pi'} \\ {S} \ar@/_/[u]_{t}}\end{array}\hspace{-0.6em}
,~ h: C \rightarrow T
,~ p :T \rightarrow \P^1
~\right)
\]
to the family of relative stable maps 
\[
 \left(~
\hspace{-0.5em}\begin{array}{c}\xymatrix@=1em{{C}\ar[d]_{\pi} \\ {S} \ar@/_/[u]_{q_i}}\end{array}\hspace{-0.6em}
,\hspace{-0.5em} \begin{array}{c}\xymatrix@=1em{{T}\ar[d]_{\pi'} \\ {S} \ar@/_/[u]_{t}}\end{array}\hspace{-0.6em}
,~ h: C \rightarrow T
,~ \big((a\circ\pi')\cdot p\big) :T \rightarrow \P^1
~\right).
\]\\
\end{re}

\vspace{-0.25cm}
\begin{re}[\textit{Fixed Loci of the $\C^*$-action on $\Mbar_g(\P^1,\part)$}] \label{fixed_locus_SM} Following the methods of \cite{Kontsevich,GraberPand, GraVakil_Loc} we identify the fixed locus as containing maps $f=p\circ h: C\rightarrow \P^1$ which are étale everywhere except possibly above $0$ and $\infty$. This means that if $B\subseteq C$ is an irreducible component of $C$ where $f|_B$ is non-constant of degree $d$, then $B\cong \P^1$ and $f|_B$ is of the form $[x_0:x_1] \mapsto [x_0^d :x_1^d]$. \\

Moreover, the stack-theoretic image of $\M^{\C^*}$ under $\bm{br}$ is reduced and equal to $rm+1$ of points given by
\[
[(rm-n)\cdot(0) + n\cdot(\infty)] \in \Sym^{rm}\P^1,
\hspace{0.5cm}
\mbox{for $n=0,\ldots, rm$. }
\]\\
\end{re}

\vspace{-0.25cm}
\begin{re}\textit{Simple Fixed Loci $\Mbar_g(\P^1,\part)$}] \label{simple_fixed_locus}
As observed in \cite{FantechiPand,GraVakil_Loc} there is a unique connected component of $\Mbar_g(\P^1,\part)^{\C^*}$ where the branch divisor is supported only at $0\in \P^1$. This is also the unique connected component of $\Mbar_g(\P^1,\part)^{\C^*}$ where the target is not degenerated, hence it is called the \textit{simple locus} in \cite{GraVakil_Loc}. We denote it by 
\[
\fixedSM \subset \Mbar_g(\P^1,\part)^{\C^*}.
\]
A geometric point in $\fixedSM$ is a morphism $f$ with domain curve that is the union of
\begin{enumerate}
\item A single genus $g$ stable curve, which we denote $C_v$, with $f|_{C_v}: C_v \rightarrow \{0\}$;
\item $l$ copies of $\P^1$ denoted $C_i$ where $C_i\cap C_v$ is a single node and $C_i\cap C_j =\emptyset$ for $i\neq j$. Also, $f|_{C_i}: C_i \rightarrow \P^1$ is a $\C^*$-fixed Galois cover of degree $\part_i$.
\end{enumerate}
We call the nodes arising from an intersection between $C_v$ and a $C_i$ \textit{flag nodes}.\\

It is shown in \cite{GraberPand,GraVakil_Loc} that $\fixedSM$ is smooth and there is an isomorphism 
\begin{align}
\fixedSM \cong \Mbar_{g,l} \times \calP_{\part_1} \times \cdots \times \calP_{\part_{l}} \label{simple_fixed_locus_iso}
\end{align}
where $\Mbar_{g,l}$ is the moduli space of stable curves and $\calP_{d} \cong \calB \mu_d$ parametrises $\C^*$-fixed Galois covers of degree $d$. \\
\end{re}

\vspace{0.1cm}
\begin{remark}
The fixed locus studied in \cite{GraberPand} was for the moduli space of absolute stable maps $\Mbar_g(\P^1,d)$ and their version of the isomorphism (\ref{simple_fixed_locus_iso}) has different automorphisms. In this article we follow \cite{GraVakil_Loc} by using the moduli space of relative stable maps $\Mbar_g(\P^1,\part)$ which has labeled (i.e. ordered) fixed points in the pre-image of $\infty\in\P^1$. 
\end{remark}

\begin{re}[\textit{Normalisation of the Simple Locus along the Flag Nodes}] \label{normalisation_along_flag_nodes_stable_maps}
The simple fixed locus $\fixedSM$ of $\M$ has universal curve $\calC_{\fixedSM}$ which can be partially normalised along the universal flag nodes to give $\bm{n}_{\fixedSM} : \widetilde{\calC}_{\fixedSM}\rightarrow \calC_{\fixedSM}$. Moreover, $\widetilde{\calC}_{\fixedSM}$ can be decomposed into a disjoint union of closed sub-stacks:
\[
\widetilde{\calC}_{\fixedSM} = \calC_{\fixedSM,v} \sqcup \bigsqcup_{i=1}^l \calC_{\fixedSM,i}. 
\]
These have geometric points $(\zeta, s)$ such that $\zeta\in \fixedSM$ as described in \ref{simple_fixed_locus} and, using the same notation, if $(\zeta, s)\in \calC_{\fixedSM,v}$ then $s\in C_v$, otherwise if  $(\zeta, s)\in \calC_{\fixedSM,i}$ then $s\in C_i$.\\

Moreover, using the isomorphism (\ref{simple_fixed_locus_iso}) we have the following  cartesian diagrams:
\begin{align}  \label{normalisation_along_flag_nodes_stable_maps_diagrams}
\begin{array}{c}
\xymatrix{
\calC_{\fixedSM,v} \ar[d] \ar[r] & \calC_{\Mbar_{g,l}} \ar[d] \\
\fixedSM \ar[r] & \Mbar_{g,l}
}
\end{array}
\hspace{1cm}
\begin{array}{c}
\xymatrix{
\calC_{\fixedSM,i} \ar[d] \ar[r] & \calC_{\calP_{\part_{l}}}\ar[d] \ar[r] & \P^1 \ar[d] \\
\fixedSM \ar[r] &  \calP_{\part_{l}}\ar[r] & \bullet
}
\end{array}
\end{align}\\
\end{re}

\subsection{\texorpdfstring{$r$}{r}-Twisted Curves}

\begin{definition}[\textit{The Moduli Space of \texorpdfstring{$r$}{r}-Twisted Prestable Curves}]\label{r-twisted_curves_definition}

The moduli stack of $l$-marked genus $g$ $r$-twisted prestable curves is denoted by $\frakM^{r}_{g,l}$.  It is the groupoid  with:
\begin{enumerate}
\item Objects over a scheme $S$ given by:
\[
\xi = \left(~
\hspace{-0.5em}\smallfamily{C}{S}{\pi}\hspace{-0.5em}
,~~\left( \hspace{-0.5em} \begin{array}{c}\xymatrix@=1em{C   \\ S \ar[u]^{s_{i}} }\end{array} \right)_{\hspace{-0.25em}i\in\{1,\ldots,l\}}
~\right)
\]
where
\begin{enumerate}
\item $\pi$ is a proper flat morphism from a tame stack to a scheme,
\item each $s_i$ is a section of $\pi$ that maps to the smooth locus of $C$,
\item the fibres of $\pi$ are purely one dimensional with at worst nodal singularities,
\item the smooth locus $C^{\mathrm{sm}}$ is an algebraic space,
\item the coarse space $\overline{\pi}: \overline{C} \rightarrow S$ with sections $\overline{s}_i$ is a genus $g$, $l$-pointed prestable curve
	\[
	\big(~\overline{C},~~ \overline{\pi}:\overline{C} \rightarrow S,~~ (\,\overline{s}_i: S\rightarrow \overline{C})_{{i\in\{1,\ldots,l\}}} ~\big)
	\]
\item the local picture at the nodes is given by $\left[ U/ \mu_r\right] \rightarrow T$, where 
	\begin{itemize}
	\item $T = \Spec A$, $U = \Spec A[z,w]/(zw-t)$ for some $t\in A$, and the action of $\mu_r$ is given by $(z,w)\mapsto (\xi_r z, \xi_r^{-1} w)$.
	\end{itemize}
\end{enumerate}
\item Morphisms $\xi_1 \rightarrow \xi_2$ between two appropriately labelled objects are given by an equivalence class of cartesian diagrams
\[
\begin{array}{c}
\xymatrix@R=1.5em{
 C_1\ar[d]_{\pi_1}\ar[r]_{a} &  C_2\ar[d]^{\pi_2}\\
S_1 \ar[r]^{a'} & S_2
}\end{array}
\]
that are compatible with the other data (i.e. $a\circ s_{1,i} = s_{2,i}\circ a'$) and where equivalence is given by base-preserving natural transformations. \\
\end{enumerate}
\end{definition}

\begin{remark}
The fact that taking equivalence classes up to base-preserving natural transformations gives a well-defined $1$-category is due to \cite[Prop. 4.2.2]{AbraVist}. 
\end{remark}

\begin{theorem}[ \textit{Smoothness of $\frakM^{r}_{g,l}$   \cite{AbraJarvis, Twisted_and_Admissible, Olsson, Chiodo_StableTwisted}} ]
The stack $\frakM^{r}_{g,l}$ is a smooth proper Artin stack of dimension $3g-3+l$.\\
\end{theorem}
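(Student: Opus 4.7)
The plan is to compare $\frakM^r_{g,l}$ with the well-understood moduli stack $\frakM_{g,l}$ of $l$-pointed genus $g$ prestable curves, which is a smooth Artin stack of dimension $3g-3+l$. There is a natural forgetful morphism
\[
F: \frakM^r_{g,l} \longrightarrow \frakM_{g,l}
\]
sending an $r$-twisted curve $(C, s_i)$ to its coarse moduli $(\overline{C}, \overline{s}_i)$. The strategy is to prove that $F$ is smooth of relative dimension zero, so that $\frakM^r_{g,l}$ inherits smoothness and dimension $3g-3+l$ from $\frakM_{g,l}$, and then to deduce the remaining stack-theoretic properties by descent through $F$.

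I would analyse $F$ \'etale-locally around a node of the coarse moduli, where the picture is $\Spec A[x,y]/(xy-s) \to \Spec A$. The local structure condition \ref{r-twisted_curves_definition}(vi) forces any lift to $\frakM^r_{g,l}$ to take the form $[(\Spec A[z,w]/(zw-t))/\mu_r] \to \Spec A$ with $z^r=x$, $w^r=y$, $t^r=s$ up to units; so specifying an $r$-twisted structure at a node is essentially the data of an $r$-th root of the nodal parameter together with the canonical $\mu_r$-automorphism. Away from the nodes the twisted structure is trivial, so the non-trivial content is concentrated at the nodes. Over the versal deformation $\Spec \Z[[s]]$ of a single node, the corresponding local model is the quotient stack $[(\Spec \Z[[t]])/\mu_r]$, which is smooth of relative dimension zero over $\Spec \Z[[s]]$.

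The main technical step, and the main obstacle, is to globalise these local computations so as to conclude that $F$ is smooth. This is precisely the content of the cited works: one shows that the deformation theory of an $r$-twisted curve is governed by essentially the same cotangent complex as its coarse moduli (Olsson), and that obstructions to smoothing twisted nodes match those for ordinary nodes (Abramovich--Vistoli, Chiodo). With $F$ smooth of relative dimension zero in hand, the dimension formula $3g-3+l$ follows from that of $\frakM_{g,l}$ and the Artin stack property transfers through $F$; the properness assertion in the statement is then to be read as properness of $F$ (equivalently, finiteness of the discrete twisting data at each node), since $\frakM_{g,l}$ itself is of course not proper.
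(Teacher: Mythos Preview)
The paper does not give its own proof of this theorem: it is stated as a citation to \cite{AbraJarvis, Twisted_and_Admissible, Olsson, Chiodo_StableTwisted}, with no accompanying argument. There is therefore nothing in the paper to compare your proposal against beyond the references themselves; your sketch is in fact the standard route taken in those sources, namely analysing the forgetful map to $\frakM_{g,l}$ via the local picture $t\mapsto t^r=s$ at each node, and the paper records the one consequence it needs in \ref{forgetting_twisted_structure_intro_re} (that $\frakM^r_{g,l}\to\frakM_{g,l}$ is flat, surjective of degree~$1$).

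One point worth flagging: you are right to be suspicious of the word ``proper'' in the statement. The stack $\frakM_{g,l}$ of \emph{prestable} curves is not proper, so $\frakM^r_{g,l}$ cannot be proper either; the remark immediately following the theorem in the paper (passing to curves with stable coarse space) confirms that the author is aware of this. Your reading of ``proper'' as a property of the forgetful morphism $F$ --- i.e.\ finiteness of the twisting data over each coarse curve --- is the correct interpretation, and is what the cited references actually establish.
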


\begin{remark}
If the twisted curves parametrised in $\frakM^{r}_{g,l}$ are also required to have stable coarse space and if $3g-3+l\geq0$, the space becomes a Deligne-Mumford stack. \\
\end{remark}

\begin{re}[\textit{Forgetting Twisted Structure}]\label{forgetting_twisted_structure_intro_re}
There is a natural morphism of stacks 
\[
\frakM_{g,l}^r\longrightarrow \frakM_{g,l}
\]
which forgets the $r$-Twisted structure. The automorphism group of a $r$-twisted curve which fixes the underlying coarse curve is $ \mu_r^{\oplus n}$ where $n$ is number of nodes \cite[Prop. 7.1.1]{Twisted_and_Admissible}. So for $r\neq 1$ the moduli space $\frakM_{g,l}^r$ has stacky structure that $\frakM_{g,l}$ does not and the morphism $\frakM_{g,l}^r\rightarrow \frakM_{g,l}$ is  not an isomorphism. However, it is flat and surjective of degree 1.\\
\end{re}

\begin{re}[\textit{Universal Curves of $r$-Twisted Curves}] \label{universal_twisted_curve_frakMr_re}
Unlike $\frakM_{g,l}$, the stack $\frakM^r_{g,l}$ has two universal curves:
\begin{enumerate}
\item[(a)] A universal $r$-twisted curve  $\bm{\pi}_{\frakM^{r}}:\calC_{\frakM^{r}}\rightarrow \frakM^{r}_{g,l} $ which is of Deligne-Mumford type and where $\calC_{\frakM^{r}}$ parametrises  objects
\[
\xi = \left(~
\hspace{-0.5em}\raisebox{0.3 em}{$\begin{array}{c}\xymatrix@=1em{C  \ar[d]_{\pi} & ~\Theta\ar@{_{(}->}[l]_{\raisebox{0.25 em}{\mbox{\scriptsize$\vartheta$}}}\ar[dl]^{\gamma_\Theta} \\ S &  }\end{array}$} \hspace{-0.5em} 
,~~\left( \hspace{-0.5em} \begin{array}{c}\xymatrix@=1em{C   \\ S \ar[u]^{s_{i}} }\end{array} \right)_{\hspace{-0.25em}i\in\{1,\ldots,l\}}
~\right) 
\]
where $(\pi, s_i)$ is an object in $\frakM^r_{g,l}$, $\gamma_\Theta$ is an étale gerbe and $\vartheta$ is a closed sub-stack.

\item[(b)] A universal coarse curve  $\bm{\overline{\pi}}_{\frakM^{r}}:\overline{\calC}_{\frakM^{r}}\rightarrow \frakM^{r}_{g,l} $ which is representable and where $\overline{\calC}_{\frakM^{r}}$ parametrises objects $(\pi, s_i)$ in $\frakM^r_{g,l}$ as well as a section of the associated coarse morphism $\overline{\pi}$. \\
\end{enumerate}
\end{re}

\subsection{\texorpdfstring{$r$}{r}-Twisted Curves with Roots of Line Bundles}

\begin{definition}[\textit{The Moduli Space of $r$-Twisted Prestable Curves with Roots of Line Bundles}]\label{D_moduli_curves_with_bundles_definition}
Let $b\in r \Z$  and  denote by $\frakD^{{1/r,b}}_{g,l}$ the moduli stack which has
\begin{enumerate}
\item Objects over a scheme $S$ given by:
\[
		\xi= \Big(~
		\zeta
		,~~ F 
		,~~ L,
		~~ e: L^r \overset{\sim}{\rightarrow}F
		~\Big)
\]
where 
\begin{enumerate}
\item $\zeta = \big(\pi:C\rightarrow S ,\, s_i:S\rightarrow C\big)$ is a family over $S$ in $\frakM^{r}_{g,l}$, 
\item $F$ is a $\pi$-relative line bundle on $C$ of degree $b$,
\item $L$ is a $\pi$-relative line bundle on $C$ of degree $\frac{b}{r}$ and
\item $e: L^r \overset{\sim}{\longrightarrow} F$ is an isomorphism. 
\end{enumerate}
\item Morphisms $\xi_1 \rightarrow \xi_2$ between two appropriately labelled objects are given by triples 
\[
		 \Big(~
		a: \zeta_1 \rightarrow \zeta_2
		,~~ b':F_1\overset{\sim}{\rightarrow} a^*F_2, ~~ b:L_1\overset{\sim}{\rightarrow} a^*L_2
		~\Big)
\]
where $a$ is a morphism in $\frakM^{r}_{g,l}$, and where $b'$ and $b$ are isomorphisms compatible with the other data (i.e. if $\phi: a^*(L^r)\overset{\sim}{\rightarrow} (a^*L)^r $ is the canonical isomorphism then $(a^*e_2) \circ \phi \circ (b^{\otimes r})  = b'\circ e_1 $.)\\
\end{enumerate}
\end{definition}

\begin{re}[\textit{Partial-Normalisation at Separating Nodes \cite[\S2.3]{ChiodoZvonkine}}] \label{normalisation_separating_nodes}
Let $\xi\in \frakD^{{1/r,b}}_{g,l}$ be a geometric point  given by 
\[
		\xi= \Big(~
		C
		,~~ s_i
		,~~ F 
		,~~ L,
		~~ e: L^r \overset{\sim}{\rightarrow}F
		~\Big)
\]
such that $C$ has a connecting node and  $C_1 \sqcup C_2$ is the partial normalisation at that node. Also, let $\iota_i: C_i\hookrightarrow C$ be the inclusions and $\widehat{\gamma}_i: C_i \rightarrow \widehat{C}_i$ be the morphisms locally forgetting the $r$-orbifold structure at the connecting node. Lastly, let  $z_i\in \widehat{C}_i$ correspond to the pre-images of the nodes.  \\

Then for $i=1,2$ the sheaves $\widehat{\gamma}_{i*} \iota_i^*L$ and $\widehat{\gamma}_{i*} \iota_i^*F$ are locally free on $\widehat{C}_i$ and there is a unique pair  $b_1,b_2\in \{ 0, \ldots, r-1\}$ such that $b_1 + b_2 \equiv 0 ~(\mathrm{mod}\,r) $ with 
\[
\Big(\widehat{\gamma}_{1*} \iota_1^*L  \Big)^r \cong \widehat{\gamma}_{1*} \iota_1^* F(-b_1 z_1)
\hspace{0.7cm}
\mbox{and}
\hspace{0.7cm}
\Big(\widehat{\gamma}_{2*} \iota_2^*L  \Big)^r \cong \widehat{\gamma}_{2*} \iota_2^*F(-b_2 z_2).
\]\\
\end{re}

\vspace{-0.6cm}
\begin{remark}
The construction of \ref{normalisation_separating_nodes} also works on families with a separating node defined by an étale gerbe.\\
\end{remark}

\subsection{Stable Maps with Divisible Ramification}
\begin{definition}[\textit{Moduli Space of $r$-Stable Maps with Roots of Ramification}] \label{RSM_definition}
Recall $rm = 2g-2+ l(\part)+|\part|(1-2g_X)$ and consider the natural morphisms
\begin{enumerate}
\item $\Mbar_g(X,\part)\rightarrow \frakD^{1,rm}_{g,l}$ 
defined by taking a family of relative stable maps $\xi$ with $\pi:C\rightarrow S$ and $q_i:S\rightarrow C$ to $(\pi, q_i, \calR_{\xi})$. 
\item $\frakD^{_{1/r,rm}}_{g,l} \rightarrow \frakD^{_{1,rm}}_{g,l}$ defined by forgetting the $r$-twisted the $r$-th root structure.
\end{enumerate}
Using these morphisms, define the moduli stack $ \Mbar^r_g(\P^1,\part)$ by the following cartesian diagram:
\[
\xymatrix@R=1.5em{
 \Mbar^r_g(\P^1,\part) \ar[r] \ar[d]&  \Mbar_g(\P^1,\part)\ar[d] \\
\frakD^{\frac{1}{r},rm}_{g,l} \ar[r]& \frakD^{1,rm}_{g,l}}
\]
A geometric point in $\Mbar^{r}_g(\P^1,\mu)$ is of the form
\[
\xi = \Big(\, (C,q_i)
,~ (T,t)
,~ h: C \rightarrow T
,~ p :T \rightarrow \P^1
,~ L
,~ e:L^r\overset{\sim}{\rightarrow} R_f
~\Big)
\]
where $f:=g\circ h$ and  $R_f:= \omega^{\log}_{C} \otimes(p\circ h)^*(\omega_{\P^1}^{\log})^\vee$  and
\begin{enumerate}
\item $\Big((\overline{C},\overline{q}_i),~ (T,t) ,~ \overline{h}: \overline{C} \rightarrow T,~ p :T \rightarrow \P^1 \Big) \in \Mbar_g(\P^1,\mu)$,
\item $\Big((C,q_i),~R_f,~ L
,~ e:L^r\overset{\sim}{\rightarrow} R_f\Big) \in \frakD^{1/r,rm}_{g,l}$.\\
\end{enumerate}
\end{definition}

\begin{re}[\textit{Simplifying Notation for Main Spaces}]\label{simplifying_notation_main_in_text}
For the rest of this article we will use the following simplifying notation for the key spaces:
\begin{enumerate}
\item$\M := \Mbar_g (X,\part)$ with universal curve $\bm{\pi}: \calC \rightarrow \M$.
\item$\M^r := \Mbar^{r}_g (X,\part)$ with universal ($r$-twisted) curve $\bm{\pi}^r: \calC^r \rightarrow \M^r$.
\item$\frakM := \frakM_{g,l}$ and $\frakM^r := \frakM^{r}_{g,l}$.
\end{enumerate}

We will also denote by $\calR$ and $\bm{\delta}:\O_{\calC}\rightarrow \calR$ the pullback to $\calC$ of the corresponding bundle and morphism on $\calC_{\M}$.\\
\end{re}

\begin{definition}[\textit{Stacks Parametrising Sections of Sheaves}]\label{totalspace_definition}
Let $\calF$ be a line bundle on $\calC^r$. Then we have the following associated stacks which will be used in this article:
\begin{enumerate}

\item $\mathsf{Tot}\,\bm{\pi}^r_*\calF := \Spec_{\M^r}\, \Big(\Sym^\bullet \,R^1\bm{\pi}^r_*(\calF^\vee \otimes \omega_{\bm{\pi}^r} ) \Big)$ is the moduli stack with:
\begin{enumerate}
\item Objects $
					\Big(~
					\xi
					,~~
					\sigma: \O_{\calC_{\xi}} \longrightarrow \calF_{\xi} ~\Big)
$
 where $\xi$ is an object of $\M^r$ and $\calF_{\xi}$ is $\calF$ pulled back to $\calC_{\xi}$.
 \item Morphisms $(\xi_1 ,\sigma_1)\rightarrow (\xi_2 ,\sigma_2)$ are morphisms $a: \xi_1\rightarrow \xi_2$ such that if $\phi: a^* \calF_{\xi_2} \overset{\sim}{\rightarrow} \calF_{\xi_1}$ is the induced isomorphism then $a^*\sigma_2 = \phi \circ \sigma_1$. \\
\end{enumerate}

\item $\mathsf{Tot}\,\calF := \Spec_{ \calC^r}\, \Big(\Sym^\bullet \, \calF^\vee \Big)$
 is the moduli stack with:
 \begin{enumerate}
  \item Objects:
\[
					\Big(~
					\xi
					,~~\vartheta: \Theta \hookrightarrow C
					,~~\gamma_\Theta: \Theta \rightarrow S
					,~~
					\zeta: \O_{S} \longrightarrow {\gamma_{\Theta}}_*\vartheta^*\calF_{\xi} ~\Big)
\]
where $\xi$ is an object of $ \M^r$ over $S$ along with the extra data for an object in $\calC^r$ of a closed sub-stack $\vartheta: \Theta \hookrightarrow C$ and étale gerbe $\gamma_\Theta: \Theta \rightarrow S$ (see \ref{universal_twisted_curve_frakMr_re} for more details).
 \item Morphisms $(\xi_1, \vartheta_1, \gamma_{\Theta_1} ,\zeta_1)\rightarrow (\xi_2, \vartheta_2, \gamma_{\Theta_2} ,\zeta_2)$ are morphisms from $\calC^r$ $b:(\xi_1, \vartheta_1, \gamma_{\Theta_1}) \rightarrow (\xi_2, \vartheta_2, \gamma_{\Theta_2})$ such that if the induced isomorphism  is  $\psi: b^*({\gamma_{\Theta_2}}_*\vartheta_2^*\calF_{\xi_2}) \overset{\sim}{\rightarrow} {\gamma_{\Theta_1}}_*\vartheta_1^*\calF_{\xi_1}$ then $b^*\zeta_2 = \psi \circ \zeta_1$.\\
\end{enumerate}
\end{enumerate}
\end{definition}

\begin{remark}
Note that while we use the notation $\mathsf{Tot}\,\bm{\pi}_*\calF$, it is often the case that $\bm{\pi}_*\calF$ is not locally free. This space is called an \textit{abelian cone} in \cite{Intrinsic}. \\
\end{remark}

\begin{re}[\textit{$r$-th Power Maps over $\M^r$}]\label{rth_power_maps_on_M}
Natural examples of definition \ref{totalspace_definition} arise when the line-bundle $\calF$ is the universal $r$-root $\L$ and its $r$-th tensor power $\L^r$.\\

Moreover, in these cases, there are natural morphisms which arise from taking the $r$-th power of the section. 
We call these morphisms the \textit{$r$-th power maps} and denote the related morphism by the following commutative diagrams:
\begin{align*}
\begin{array}{c}
\xymatrix{
\mathsf{Tot}\,\bm{\pi}_*\L \ar[r]^{\bm{\tau}}\ar[d]^{\bm{\beta}} &\mathsf{Tot}\,\bm{\pi}_*\L^r \ar[d]^{\bm{\alpha}} \\
\M^r \ar@{=}[r]&\M^r
} 
\end{array}
\hspace{0.5cm}
\hspace{0.5cm}
\begin{array}{c}
\xymatrix{
\mathsf{Tot}\,\L \ar[r]^{\check{\bm{\tau}}}\ar[d]^{\check{\bm{\beta}}} &\mathsf{Tot}\,\L^r \ar[d]^{\check{\bm{\alpha}}} \\
\calC^r \ar@{=}[r]&\calC^r
} 
\end{array}\num \label{rth_power_map_commuting_diagrams} \\
\end{align*}
\end{re}

\begin{remark}
Comparing the two $r$-th power maps we see that that $\check{\bm{\tau}}$ is a map between total spaces of line bundles on $\calC^r$. In fact, it is an $r$-fold cover ramified at the zero section. However, in general, $\mathsf{Tot}\,\bm{\pi}_*\L$ is not the total space of a bundle and hence $\bm{\tau}$ is more complicated than $\check{\bm{\tau}}$. 
\end{remark}

\begin{definition}[\textit{Moduli Space of Stable Maps with Divisible Ramification}]\label{M1/r_main_definition}
The canonical ramification section $\bm{\delta} :\O_{\calC} \rightarrow \calR$  and the universal $r$th root $\bm{e}:\L^r\overset{\sim}{\rightarrow} \calR$ define a natural inclusion
\begin{align*}
\begin{array}{cccc}
\bm{i'} : & \M^r & \longrightarrow & \mathsf{Tot}\,\bm{\pi}_*\L^r \\
& \xi &\longmapsto & \big(~\xi, \,\bm{e}_\xi^{-1}(\bm{\delta}_\xi)\,\big).
\end{array}
\end{align*}
The moduli space of stable maps with divisible ramification  $\M^{1/r}$ (also denoted $\M^{_{1/r}}_g(\P^1, \part)$) is defined by following cartesian diagram which also defines  $\ForMapDRtoRSM$ and $\bm{i}$:
\begin{align}\label{M1/r_defining_diagram}
\begin{array}{c}
\xymatrix@R=1.5em{
		\M^{\frac{1}{r}} \ar[r]^{\bm{i}\hspace{0.5em}}\ar[d]_{\ForMapDRtoRSM} &\mathsf{Tot}\,\bm{\pi}_*\L \ar[d]^{\bm{\tau}}\\
		\M^{r}\ar[r]_{\bm{i'}\hspace{0.5em}} &\mathsf{Tot}\,\bm{\pi}_*\L^r
} 
\end{array}  
\end{align}\\
\end{definition}

\begin{re}[\textit{Universal Objects of $\M^{1/r}$}]
The universal objects of $\M^r$ pullback via the morphism $\bm{\nu} :\M^{1/r}\rightarrow \M^r$ to give universal objects on $\M^{1/r}$. For example we have a universal ($r$-twisted) curve $\bm{\pi}^{1/r}:\calC^{1/r} \rightarrow \M^{1/r}$. The universal $r$th root section $\bm{\sigma}$ pulls back from $\mathsf{Tot}\,\bm{\pi}_*\L$.\\
\end{re}

\begin{theorem}[\textit{Branch-Type morphism for $\M^{1/r}$} \cite{Leigh_Ram}]\label{branch_type_morphism_re}
There is a morphism of stacks
\[
\mathtt{br} :\M^{\frac{1}{r}}  \longrightarrow \Sym^{m} X 
\]
defined at each geometric point in $\xi \in \M^{1/r}$ with $C:=\calC^{1/r}_\xi$ and $f:= \bm{f}^{1/r}_{\xi}$ to be
\[
\mathrm{Div}\Big(Rf_*\big[ \O_{C} \overset{\bm{\sigma}_{\xi}}{\longrightarrow}  \L_{\xi} \big] \Big). 
\]
It commutes with the branch morphism of \cite{FantechiPand} via the diagram
\[
\xymatrix@R=1.5em{
\M^{\frac{1}{r}} \ar[r]^{\mathtt{br}\hspace{0.9em}}\ar[d] &\Sym^{m} X \ar[d]^{\Delta}\\
 \M \ar[r]^{\bm{br}\hspace{0.9em}} & \Sym^{rm} X  
}
\]
where $\Delta$ is defined by $\sum_i x_i \mapsto \sum_i rx_i$.\\
\end{theorem}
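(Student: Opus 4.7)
The plan is to mimic the Fantechi--Pandharipande construction of theorem \ref{FP_branch_morphism_thm}, but applied to the two-term complex $E := [\O_{\calC^{1/r}} \xrightarrow{\bm{\sigma}} \L]$ in place of $[\O_{\calC} \xrightarrow{\bm{\delta}} \calR]$. First, I would form $E$ on the universal curve $\calC^{1/r}$ and push it forward to $X$ along the universal relative stable map $\bm{f}^{1/r}$. The arguments of \cite{FantechiPand}, applied verbatim in this setting, show that $R\bm{f}^{1/r}_* E$ is a perfect two-term complex whose homology, on every geometric fiber, is supported at finitely many smooth points of $C$, and hence away from any point of depth zero. Mumford's divisor construction then produces a relative Cartier divisor on $X \times \M^{1/r}$ whose fibers have degree $\deg \L = m$, which is classified by a morphism $\mathtt{br} : \M^{1/r} \to \Sym^{m} X$.

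Next, I would verify the formula on geometric points. At a smooth ramification point $p$ of $f$ with ramification index $e_p$, the canonical section $\bm{\delta}_\xi$ vanishes to order $e_p - 1$ by \ref{Can_ram_sec}. The divisibility condition forces $e_p - 1 \in r\Z$, so $\bm{\sigma}_\xi$ vanishes to order $(e_p - 1)/r$. Mumford's divisor then contributes $(e_p-1)/r$ times $f(p)$ for each such point, and summing gives a divisor of total degree $\tfrac{1}{r}(2g-2+l+|\part|(1-2g_X)) = m$, consistent with Riemann--Hurwitz and with the degree of $\L$ on a fiber.

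The commutativity with $\bm{br}$ then follows from the defining cartesian diagram (\ref{M1/r_defining_diagram}): pulling back from $\mathsf{Tot}\,\bm{\pi}^r_* \L$ one has $\bm{e}(\bm{\sigma}^r) = \bm{\delta}$ as sections of $\calR$ under the isomorphism $\bm{e}: \L^r \overset{\sim}{\to} \calR$. Combined with the local identity $\mathrm{div}(\bm{\sigma}^r) = r \cdot \mathrm{div}(\bm{\sigma})$ and the multiplicativity of $\mathrm{Div}$ under raising the target line bundle of a two-term complex to a tensor power, this yields $\bm{br}$ evaluated on the image in $\M$ equal to $r \cdot \mathtt{br}$, i.e.\ $\Delta \circ \mathtt{br}$.

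The main technical obstacle will be verifying the hypotheses of Mumford's divisor construction at the $r$-twisted nodes of $\calC^{1/r}$ and across the degenerated targets, since a priori the étale gerbes at nodes could contribute fractional multiplicities. The key input is that the canonical ramification section $\bm{\delta}$ constructed in \cite{Leigh_Ram} is an isomorphism near pre-images of nodes of the target (see \ref{Can_ram_sec}), and hence so is $\bm{\sigma}$. Thus $E$ is acyclic in a neighbourhood of these nodes, and the remaining check is that pushforward through the étale gerbes at the balanced nodes of the domain produces an integral (and correctly weighted) divisor. This ultimately reduces to the balancing condition built into definition \ref{r-twisted_curves_definition} together with the local analysis recalled in \ref{normalisation_separating_nodes}.
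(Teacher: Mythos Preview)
The paper does not give its own proof of this theorem: it is quoted from \cite{Leigh_Ram} and stated without proof in section~\ref{background_section}. So there is nothing in the present paper to compare your argument against line by line.

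That said, your outline is the natural one and is presumably close to what is carried out in \cite{Leigh_Ram}: replace the Fantechi--Pandharipande complex $[\O_C\xrightarrow{\delta}\calR]$ by $[\O_C\xrightarrow{\sigma}\L]$, check that the hypotheses of Mumford's $\mathrm{Div}$ are met, read off the degree as $\deg\L=m$, and deduce the commuting square from $e(\sigma^r)=\delta$. Two points deserve more care than you give them. First, your appeal to ``multiplicativity of $\mathrm{Div}$ under raising the target line bundle to a tensor power'' is not a general property of $\mathrm{Div}$ for arbitrary perfect complexes; what you actually need is the identity of effective Cartier divisors $\mathrm{div}(\sigma^r)=r\cdot\mathrm{div}(\sigma)$ on $C$, together with the compatibility of $\mathrm{Div}\circ Rf_*$ with pushforward of Cartier divisors (this is how \cite{FantechiPand} relate $\mathrm{Div}$ to the usual branch divisor). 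You should phrase the commutativity that way rather than invoking a nonexistent tensor-power formula. Second, the map $f$ in $\M^{1/r}$ has source an $r$-twisted curve, so $Rf_*$ really means push forward to the coarse curve and then along $\overline f$; you should say explicitly why taking $\mu_r$-invariants at the balanced nodes does not disturb the $\mathrm{Div}$ computation (you gesture at this, and \ref{Can_ram_sec} together with the balancing condition is indeed the right input, but the reduction to the coarse map should be made explicit).
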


\section{\texorpdfstring{$\C^*$}{C*}-Action on Stable Maps with Divisible Ramification}\label{C*-section}

\notationConvention\\

\subsection{Natural \texorpdfstring{$\C^*$}{C*}-Actions and Equivariant Morphisms}

\begin{re}[\textit{$\C^*$-Action on $\M^r$}] \label{C*-action_on_M_def}
For $a\in \C^*$ there is an isomorphism $m_a: \P^1\rightarrow \P^1$ defining multiplication by $a$ and for any morphism $f:C\rightarrow \P^1$ we have $a\cdot f = m_a \circ f$. This gives rise to canonical isomorphisms
\[
f^*(\omega_{\P^1}^{\log})^\vee \overset{\sim}\longleftarrow f^*m_a^*(\omega_{\P^1}^{\log})^\vee \overset{\sim}\longrightarrow (a\cdot f)^*(\omega_{\P^1}^{\log})^\vee
\]
which (using the notation from \ref{Ram_bun_def}) give an isomorphism which we denote by
\[
\Phi_{a}: R_f \overset{\sim}\longrightarrow R_{(a\cdot f)}.
\]

We use this canonical morphism to define a natural $\C^*$-action on $\M^r$,  given by the morphism of stacks
$\C^*\times \M^r \longrightarrow  \M^r$ that maps a moduli point 
\[
 \left(\,
a
,\, \Big(\, (C,q_i)
,~ (T,t)
,~ h: C \rightarrow T
,~ p :T \rightarrow \P^1
,~ L
,~ e:L^r\overset{\sim}{\rightarrow} R_f
\,\Big)\right)
\]
to the moduli point
\[
\Big(~
 (C,q_i)
,~ (T,t)
,~ h
,~ a\cdot p
,~ L
,~ \Phi_a\circ e
~\Big).
\]
This can be defined on families in the same way as in \ref{C*-action_relative_stable_maps_def}. \\
\end{re}

\begin{remark}
This action has the property that the natural forgetful morphism $\M^r \rightarrow \M$ is $\C^*$-equivariant. \\
\end{remark}

\begin{re}[\textit{$\C^*$-Action on $\M^{\frac{1}{r}}$ and Related Spaces}]\label{action_on_M1/r_and_related_spaces}

Since the $\C^*$-action defined in \ref{C*-action_on_M_def} does not affect the bundle $\L$, it extends immediately to the the spaces $\M^{1/r}$,  $\mathsf{Tot}\,\bm{\pi}_*\L$, $\mathsf{Tot}\,\bm{\pi}_*\L^r$ and their universal curves by leaving the extra data unaffected. Moreover, we give the spaces $\frakM$, $\frakM^r$ and $\calT$ the trivial action.\\
 
The natural inclusions $\bm{i'}: \M^r\hookrightarrow \mathsf{Tot}\,\bm{\pi}_*\L^r$ and $\bm{i}: \M^{1/r}\hookrightarrow \mathsf{Tot}\,\bm{\pi}_*\L$ appearing in definition \ref{M1/r_main_definition} are equivariant since the the following diagram is commutative:
\[
\xymatrix@R=0.8em{
&R_f \ar[dd]^{\Phi_a}& \\
\O_C \ar[ru]^{\delta_f} \ar[rd]_{\delta_{(a\cdot f)}} & & L^r \ar[lu]_{e} \ar[ld]^{\Phi_a \circ e}\\
&R_{(a\cdot f)} &
}
\]\\
\end{re}

\subsection{Basic Properties of the Fixed Locus}

\begin{re}[\textit{The Simple and Non-Simple Fixed Loci}] \label{simple_non-simple_fixed_loci_of_M1/r}
By the universal property of the fixed locus the commuting diagram from theorem \ref{branch_type_morphism_re} restricts to the commuting diagram
\[
\xymatrix@R=1.5em{
(\M^{\frac{1}{r}})^{\C^*} \ar[r]^{\widetilde{\mathtt{br}}\hspace{0.75em}}\ar[d] &\Sym^{m} X \ar[d]^{\Delta}\\
\M^{\C^*} \ar[r]^{\widetilde{\bm{br}}\hspace{0.75em}}& \Sym^{rm} X  
}
\]
where $\widetilde{\mathtt{br}}$ and $\widetilde{\bm{br}}$ are the respective restrictions of $\mathtt{br}$ and $\bm{br}$ to the fixed loci.\\

We know from \ref{fixed_locus_SM} that the stack-theoretic image of $\M^{\C^*}$ under $\bm{br}$ is reduced and equal to a finite number of points. Since $\Delta$ is a closed immersion we must also have that the image of $(\M^{1/r})^{\C^*}$ under $\mathtt{br}$ is reduced and equal to a finite number of points. We can identify the possible points via $\mathrm{Im}(\widetilde{\bm{br}}) \cap \mathrm{Im}(\Delta)$ to be
\[
h_n:= [(m-n)\cdot(0) + k\cdot(\infty)] \in \Sym^m\P^1
\]
 for $n\in\{ 0,1,\ldots, m\}$, giving a decomposition 
 \[
(\M^{\frac{1}{r}})^{\C^*} = \bigsqcup_{n=0}^{m} \left( (\M^{\frac{1}{r}})^{\C^*} \cap \mathtt{br}^{-1}(h_n) \right).
\]

Following \ref{simple_fixed_locus} we split this into two cases:
\begin{enumerate}
\item \textit{The Simple Fixed Locus}: $\fixedDR := (\M^{\frac{1}{r}})^{\C^*} \cap \mathtt{br}^{-1}(h_0)$. This is the case where there is \textit{no} degeneration at $\infty \in \P^1$. 
\item \textit{The Non-Simple Fixed Loci}: $\nonsimpDR_n := (\M^{\frac{1}{r}})^{\C^*} \cap \mathtt{br}^{-1}(h_n)$ for $n\in\{ 1,\ldots, m\}$. This is the case where there \textit{is} degeneration at $\infty \in \P^1$. \\
\end{enumerate}
\end{re}

\begin{re}[\textit{The Simple Fixed Locus of \texorpdfstring{$\M^r$}{Mr}}] \label{simple_fixed_locus_of_M^r}
By the universal property of the fixed locus the forgetful morphism $\ForMapRSMtoSM: \M^r\rightarrow \M$ restricts to a morphism $\widetilde{\ForMapRSMtoSM}: (\M^r)^{\C^*}\rightarrow \M^{\C^*}$ on the fixed loci. Since this is surjective and the stack theoretic image of $(\M)^{\C^*}$ under $\bm{br}$ is reduced, we must have the following equality of stack theoretic images
\[
\mathrm{Im}(br\circ\ForMapRSMtoSM) = \mathrm{Im}(br) \subset \Sym^{rm}\, \P^1.
\] 
Hence we have a decomposition of $(\M^r)^{\C^*}$  similar to that described in \ref{simple_non-simple_fixed_loci_of_M1/r} and we define the \textit{simple fixed locus} $\fixedRSM\subset (\M^r)^{\C^*}$  to be
\[
\fixedRSM := (\M^r)^{\C^*} \cap (\mathtt{br}\circ\ForMapRSMtoSM) ^{-1} \big([ rm\cdot(0)] \big). 
\]\\
\end{re}

\vspace{-0.75cm}
\begin{lemma}[\textit{The Simple Fixed Loci as Pullbacks}] \label{simple_fixed_locus_of_M1/r_Pullbacks}
The simple fixed loci $\fixedRSM$ and $\fixedDR$ fit into the following diagram where both squares are cartesian and the vertical arrows are closed immersions:
\begin{align}
\begin{array}{c}
\xymatrix@R=1.5em{
\fixedDR\ar[r]^{\ForMapDRtoRSMfixed} \ar[d]^{\fixedIncDR}& \fixedRSM \ar[r]^{\ForMapRSMtoSMfixed}\ar[d]^{\fixedIncRSM}&  \fixedSM\ar[d]^{\fixedIncSM}\\
\M^{\frac{1}{r}}\ar[r]_{\ForMapDRtoRSM} &\M^r \ar[r]_{\ForMapRSMtoSM}& \M
} \label{simple_fixed_locus_of_M1/r_Pullbacks_diagram}
\end{array}
\end{align}
\end{lemma}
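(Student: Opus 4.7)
The strategy is to prove the two cartesian squares in parallel, using the same underlying template. The key auxiliary fact is that, for each of the horizontal forgetful morphisms, forming the $\C^*$-fixed locus commutes with pullback to the base. I plan to carry this out in detail for the right square and then indicate how the left square follows by the same argument.

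For the right square, the plan is to identify $\fixedRSM$ with the fibre product $\M^r \times_\M \fixedSM$. Since $\fixedSM$ is a connected component of $\M^{\C^*}$, it is a closed substack of $\M$, so this fibre product is automatically a closed substack of $\M^r$; that observation simultaneously establishes the closed-immersion claim for $\fixedIncRSM$. Using the description $\fixedSM = \M^{\C^*} \cap \bm{br}^{-1}([rm\cdot(0)])$ together with the branch compatibility of Theorem \ref{branch_type_morphism_re}, the desired identification reduces to the equality
\[
(\M^r)^{\C^*} \;=\; \M^r \times_\M \M^{\C^*}.
\]
The inclusion $\subseteq$ is immediate from the $\C^*$-equivariance of $\ForMapRSMtoSM$ (see \ref{C*-action_on_M_def}) and the universal property of the fixed locus. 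For the reverse inclusion, given an $S$-family $\xi^r\in\M^r(S)$ whose image $\xi\in\M(S)$ is $\C^*$-fixed via isomorphisms $\phi_a: a\cdot\xi \cong \xi$ for each $a\in\C^*$, I would promote each $\phi_a$ to an isomorphism $a\cdot\xi^r \cong \xi^r$ by transporting the same underlying isomorphism of curves and degenerated targets. Because the action on $\M^r$ from \ref{C*-action_on_M_def} leaves the twisted structure and the root line bundle $L$ untouched and only composes $e$ with $\Phi_a$, the compatibility check reduces to the naturality of the canonical isomorphism $\Phi_a: R_f \to R_{a\cdot f}$.

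For the left square, the same template applies, with $\ForMapDRtoRSM$ in place of $\ForMapRSMtoSM$. Here the extra datum in $\M^{1/r}$ is the section $\bm{\sigma}$, on which $\C^*$ acts trivially (see \ref{action_on_M1/r_and_related_spaces}), so the promotion-of-isomorphisms step is now essentially automatic. The branch condition defining $\fixedDR$ is $\mathtt{br}^{-1}(h_0)$, and the commuting diagram in \ref{simple_non-simple_fixed_loci_of_M1/r}, combined with the fact that $\Delta$ is a closed immersion (so in particular injective on its image), matches it with the branch condition defining $\fixedRSM$ after pullback along $\ForMapDRtoRSM$. The vertical arrow $\fixedIncDR$ is then a closed immersion as the base change of $\fixedIncRSM$.

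The main obstacle is the lifting step in the right square: verifying that $\phi_a$ is automatically compatible with the root data $(L,e)$. This rests on the fact that the $\C^*$-equivariant structure on $\omega_{\P^1}^{\log}$ is canonical (with invariant section $dx/x$), so $\Phi_a$ transports the root isomorphism in precisely the way that arises by pulling back along $\phi_a$; concretely, one must check that the equality $\alpha_a^* e = \Phi_a \circ e$ holds up to the natural identifications, where $\alpha_a$ is the underlying curve-automorphism in $\phi_a$. Once this verification is made, both squares are cartesian and the entire diagram (\ref{simple_fixed_locus_of_M1/r_Pullbacks_diagram}) is as claimed.
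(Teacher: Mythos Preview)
The main gap is in your lifting step for the inclusion $\M^r\times_\M\M^{\C^*}\subseteq(\M^r)^{\C^*}$. You implicitly assume that the curve automorphism $\alpha_a$ inside $\phi_a:a\cdot\xi\cong\xi$ is the identity. It is not: for a $\C^*$-fixed stable map in $\fixedSM$, the isomorphism $a\cdot\xi\cong\xi$ involves a genuine reparametrisation of the source (on each rational component $C_i$ of degree $\part_i$, the map $\alpha_a|_{C_i}$ is multiplication by a $\part_i$-th root of $a$). Consequently, promoting $\phi_a$ to an isomorphism in $\M^r$ requires both lifting $\alpha_a$ from the coarse curve to the $r$-twisted curve and supplying a bundle isomorphism $b:L\to\alpha_a^*L$ intertwining $e$ with $\Phi_a\circ e$. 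Your appeal to the fact that the action ``leaves $L$ untouched'' addresses the \emph{action side} of the comparison but not the \emph{isomorphism side}: one still needs $L\cong\alpha_a^*L$ compatibly, and naturality of $\Phi_a$ alone does not supply it. Even if one can argue this pointwise (line bundles on $\P^1[r]$ are determined by degree), assembling these choices coherently in families so as to land inside the stacky fixed locus is a nontrivial further step that you have not carried out.

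The paper avoids this lifting problem altogether. Rather than proving the full equality $(\M^r)^{\C^*}=\M^r\times_\M\M^{\C^*}$, it sets $\calX:=\M^r\times_\M\fixedSM$ and uses the auxiliary moduli space $\frakD^r:=\frakD^{1/r,rm}_{g,l}$ of twisted curves with $r$-th roots of a line bundle, which carries the trivial $\C^*$-action by definition. The equivariant forgetful map $\M^r\to\frakD^r$ yields an equivariant isomorphism $\calX\cong\frakD^r\times_{\frakD^1}\fixedSM$; since every factor on the right carries the trivial action, so does $\calX$, whence $\calX\subseteq(\M^r)^{\C^*}$ and then $\calX\subseteq\fixedRSM$ via the branch condition. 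The left square is handled by the analogous trick using the total-space stack $\mathsf{Tot}_{\frakD}\,{\bm{\pi}_{\frakD}}_*\L_{\frakD}$ over $\frakD^r$. This route is cleaner precisely because it replaces the delicate lift of $\phi_a$ by a single equivariant cartesian identification.
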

\begin{proof}
Following \ref{simple_non-simple_fixed_loci_of_M1/r} and \ref{simple_fixed_locus_of_M^r}, both $\fixedDR$ and $\fixedRSM$ can be constructed using pullbacks of $[rm\cdot(0)] \in\Sym\,\P^1$ under the compositions involving the branch morphism and forgetful morphisms. The pullback construction allows us to construct the desired diagram. For example we have the following commuting diagram 
\[
\xymatrix@R=1.5em{
\fixedRSM \ar@{-->}[r] \ar@/^0.75pc/[rr] \ar[d] & \fixedSM \ar[r] \ar[d]&\big\{[rm\cdot(0)]\big\} \ar[d] \\
(\M^r)^{\C^*} \ar[r] &\M^{\C^*} \ar[r] & \Sym\,\P^1
}
\]
where the right square is cartesian. The dashed arrow arises from the properties of the cartesian square. We have a similar diagram for $\fixedDR$. Hence we have a diagram of the desired form, but we must show that the squares are cartesian.  \\

If we define $\calX := \M^r \times_{\M} \fixedSM$, then we claim that $\fixedRSM \subseteq \calX$ is a sub-stack. To see this we consider use the rightmost square of (\ref{simple_fixed_locus_of_M1/r_Pullbacks_diagram}) and combine it with the cartesian square defining $\calX$:
\[
\xymatrix@R=1.5em{
\raisebox{0.1em}{$\fixedRSM$} \ar@{^(->}[rd]_{\fixedIncRSM} \ar@/^0.75pc/[rr]^{\ForMapRSMtoSMfixed} \ar@{-->}[r]&  \raisebox{0.1em}{$\calX$} \ar[r] \ar@{^(->}[d]& \raisebox{0.1em}{$\fixedSM$} \ar@{^(->}[d]^{\fixedIncSM} \\ 
  &\M^r \ar[r] & \M
}
\]
Hence we must have that $\fixedRSM \rightarrow \calX$ is an immersion. In a similar way we have that $\fixedDR$ is a sub-stack of  $\calY := \M^{1/r} \times_{\M^r} \fixedRSM$. \\

We now show that every family in $\calX$ is fixed under the $\C^*$ action. Recall the moduli space from definition  \ref{D_moduli_curves_with_bundles_definition} and define the notation $\frakD^n:= \frakD^{_{1/n,rm}}_{g,l}$ where $\C^*$ acts with the trivial action. Then from definition \ref{RSM_definition} we have a natural equivariant forgetful map $\M^r \longrightarrow \frakD^{r}$ which gives the equivariant isomorphism $\calX \cong \frakD^{r} \times_{\frakD^{1}} \fixedSM$. This shows that $\calX$ has trivial $\C^*$ action and hence $ \calX \subseteq (\M^r)^{\C^*}$. \\

We can similarly express $\calY$ as a cartesian product of spaces with trivial action by using the total-space stack over $\frakD^{r}$ defined by:
\[
\mathsf{Tot}_{\frakD}\,{\bm{\pi}_{\frakD}}_*\L_{\frakD} := \Spec_{\frakD}\, \Big(\Sym^\bullet \,R^1{\bm{\pi}_{\frakD}}_*(\L_{\frakD}^\vee \otimes \omega_{\bm{\pi}_{\frakD}} ) \Big)
\]
 which contains objects $
					\big(~
					\xi
					,~~
					\sigma: \O_{(\calC_{\frakD})_{\xi}} \longrightarrow (\L_{\frakD})_{\xi} ~\big)
$ where $\xi$ is an object of $\frakD^{r}$, $\bm{\pi}_{\frakD}: \calC_{\frakD} \rightarrow \frakD^{r}$ is the universal ($r$-twisted) curve and  $\L_{\frakD}$ is the universal $r$-th root. 
\end{proof}
\mbox{}

\vspace{-0.2cm}
\begin{lemma} \label{fixed_locus_etale_morphism}
The forgetful morphism $\ForMapDRtoRSMfixed: \fixedDR \rightarrow \fixedRSM$ from lemma \ref{simple_fixed_locus_of_M1/r_Pullbacks} is étale of degree $r^{l}$. 
\end{lemma}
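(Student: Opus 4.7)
The plan is to exploit the cartesian structure from Lemma~\ref{simple_fixed_locus_of_M1/r_Pullbacks}: the left square tells us that $\ForMapDRtoRSMfixed$ is a base-change of $\ForMapDRtoRSM$, and Definition~\ref{M1/r_main_definition} identifies $\ForMapDRtoRSM$ as the base-change along $\bm{i'}$ of the $r$-th power map $\bm{\tau}: \mathsf{Tot}\,\bm{\pi}_*\L \to \mathsf{Tot}\,\bm{\pi}_*\L^r$. Composing these two cartesian squares, $\ForMapDRtoRSMfixed$ is identified with the base-change of $\bm{\tau}$ along the morphism $\fixedRSM \to \mathsf{Tot}\,\bm{\pi}_*\L^r$ recording the section $\sigma_r := \bm{e}^{-1}(\bm{\delta})$ over the simple fixed locus. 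The task thus reduces to analysing this base-change.

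First I would compute the geometric fibres. Over $\xi \in \fixedRSM$, by~\ref{simple_fixed_locus} the curve $\calC_\xi$ decomposes as $C_v \cup \bigsqcup_{i=1}^l C_i$, where $f|_{C_v}$ is constant at $0$ and each $f|_{C_i}$ is a Galois cover of degree $\part_i$ ramified at the flag node and at $q_i$. The canonical ramification section $\bm{\delta}_\xi$ vanishes identically on $C_v$ (the differential of a constant map is zero) and on each $C_i$ is generically nonzero with vanishing concentrated at the flag node, of multiplicity $r$-divisible in the orbifold sense --- this divisibility is exactly what allows $\L_\xi$ to be an $r$-th root of $\calR_\xi$. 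Consequently, a section $\sigma : \O_{\calC_\xi} \to \L_\xi$ satisfying $\sigma^r = \sigma_{r,\xi}$ is forced to vanish identically on $C_v$, while on each $C_i$ it is determined up to multiplication by an element of $\mu_r$ (acting on the generic fibre of $\L|_{C_i}$), giving $r$ choices per component; each such $\sigma|_{C_i}$ automatically vanishes at the flag node and thus glues with the zero section on $C_v$. This gives exactly $r^l$ geometric points in each fibre.

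To upgrade this set-theoretic count to étaleness, I would use the partial normalisation of the universal curve over $\fixedRSM$ along the flag nodes (the pullback of \ref{normalisation_along_flag_nodes_stable_maps} along $\ForMapRSMtoSMfixed$) to decompose the analysis of $\bm{\tau}$ componentwise. On the vertex component the constraint $\sigma = 0$ is uniquely rigid and contributes no deformations. On each flag-arm component the pointwise $r$-th power map $\check{\bm{\tau}}$ is étale of degree $r$ away from the zero section, being fibrewise the morphism $\C^* \to \C^*$ given by $z \mapsto z^r$; the analysis at the orbifold flag node itself is done $\mu_r$-equivariantly using the local model $[\Spec \C[u]/\mu_r]$.

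The hard part will be a careful treatment of the fact that $\bm{\tau}$ is in general not étale at sections which vanish on a whole divisorial component, so a priori the base-change over $\fixedRSM$ could fail to be étale along the $C_v$-direction. The way to handle this will be to argue that the gluing conditions at the flag nodes, combined with the trivialisation $\bm{e}: \L^r \overset{\sim}{\to} \calR$, rigidify any infinitesimal deformation of an $r$-th root: I would verify the formal lifting criterion directly at each of the $r^l$ points of a fibre using the componentwise decomposition, so that every deformation lifts uniquely along the $C_i$-arms and is forced to vanish along $C_v$.
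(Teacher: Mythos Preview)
Your set-theoretic count of $r^l$ choices of $\sigma$ is correct and matches the paper's, but you have missed the stacky bookkeeping that turns this into a degree. The objects of $\fixedRSM$ carry extra automorphisms coming from the global $\mu_r$-action on $L$ and from the $r$-orbifold structure at each flag node, and some of these automorphisms identify distinct choices of $\sigma$ rather than lifting to automorphisms of $(\xi,\sigma)$. The paper's proof tracks this carefully: the automorphisms from $L\overset{\sim}{\to}L$ and from nodes with $r\nmid\part_i$ act nontrivially on the set of $r$-th roots, so the number of isomorphism classes in the fibre is only $r^{l-1-\#\{i:r\mid\part_i\}}$; the degree $r^l$ then comes out as this count multiplied by the ratio $|\mathrm{Aut}\,\xi|/|\mathrm{Aut}\,\xi'|=r^{1+\#\{i:r\mid\part_i\}}$. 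Your proposal asserts ``exactly $r^l$ geometric points in each fibre'' and never mentions automorphisms, so as written the degree computation is incomplete.

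Your approach to \'etaleness is different from the paper's and, as sketched, has a genuine gap. The formal lifting criterion requires that the tangent to the fibre $\{\sigma:\sigma^r=\delta\}$ vanish; at a point with $\sigma|_{C_v}=0$ this tangent space contains every $\tau\in H^0(C,\L)$ supported on $C_v$ and satisfying the gluing constraints at the flag nodes. You assert such $\tau$ are ``forced to vanish'', but this is exactly what needs proof and depends on $\L_v$ and the orbifold structure at the nodes --- it is not handled by the observation that $\check{\bm\tau}$ is \'etale off the zero section, since on $C_v$ we sit precisely on the zero section. The paper avoids this direct computation entirely: it defers \'etaleness to Corollary~\ref{cotangent_F1/r->F_is_zero}, where the equivariant analysis of $\eF_{\ForMapDRtoRSM}$ (Lemmas~\ref{distinguished_triangle_for_vertex_contributions}, \ref{distinguished_triangle_for_flag_contributions}, \ref{edge_contributions_lemma}) shows that the vertex contribution carries $\C^*$-weights $\tfrac{1}{r}$ and $1$, hence has no fixed part, and then smoothness of both loci finishes the argument.
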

\begin{proof}
We will show $\ForMapDRtoRSMfixed$ is étale in corollary \ref{cotangent_F1/r->F_is_zero}. We assume that $3g-3+l\geq 0$ since the special cases $(g,l)=(0,1)$ and $(0,2)$ are simpler and only require minor modifications. We calculate the degree at the fibre of a geometric point $\xi\in\fixedRSM$ determined by the data
\[
f: C \longrightarrow \P^1
,~ q_i 
,~ L
~~\mbox{and}~
~ e:L^r\overset{\sim}{\longrightarrow} R_f. 
\]
The pre-image is supported on finite collection determined by the $r$-th roots $\sigma$ of $\delta$. Denote by $C_v \subset C$ be the union of irreducible components of $C$ mapping to $0\in\P^1$ and denote by $C_i\cong \P^1[r]$ the irreducible components labelled by $q_i$ where $f$ is non-constant and the image of the $r$-orbifold point is $0$.\\

The map $f$ is constant on $C_v$ so we have $\delta|_{C_v} = 0$ and hence we must also have $\sigma|_{C_v} = 0$ as well. Thus the number of possible $r$-th roots $\sigma$ of $\delta$ is determined by the number of possibilities for $\sigma|_{C_i}$. \\

We choose coordinates on $C_i$ given by $C_i\setminus C_v \cong \Spec\,\C[y]$ and at the $r$-orbifold point by  $V:=\left[ (\Spec\,\C[z])/ \mu_r\right]$ where the action of $\mu_r$ is given by $z\mapsto \xi_r z$. Then $(R_f)|_{C_i}$ has local $\mu_r$-equivariant generator $z^{-r\part_i}$ at the orbifold point and trivial generator on $C_i\setminus C_v$. The restriction $\delta|_{C_i}$ is then given on these open sets by $z^{r\part_i}\cdot(z^{-r\part_i})$ and $1$ respectively. \\

The $r$-root bundle $L$ is similarly given by local generators $z^{-\part_i}$ and $1$, while the $r$-root section $\sigma|_{C_i}$ is determined by polynomials $\zeta_1(z)\in\C[z]$ and $\zeta_2(y)\in\C[y]$ compatible with change of coordinates such that $\zeta_1^r = z^{r\part_i}$ and $\zeta_2^r = 1$. There are exactly $r$ different choices for these, determined by the roots of unity. Hence we have a total of $r^l$ different choices for $\sigma$. \\

We must now examine the automorphisms and isomorphisms of objects in $\fixedDR$. We have automorphisms in $\fixedDR$ and $\fixedRSM$ arising from $\overline{C}$ and $f$, however these are unaffected by the morphism $\ForMapDRtoRSMfixed$. The other automorphisms of $\fixedRSM$ form a subgroup $\mu_r^{_{\oplus(l+1)}}$ arising from the isomorphism $L \overset{\sim}{\rightarrow} L$ defined by $r$-roots of unity and the $r$-orbifold nodes (discussed more in \ref{forgetting_twisted_structure_intro_re}). \\

The automorphisms of $\xi$ arising from $L\overset{\sim}{\rightarrow}L$ and the nodes where $r\nmid \part_i$ do not correspond to automorphisms of  $\xi'\in\ForMapDRtoRSMfixed^{-1}(\xi)$, but rather isomorphisms of objects. However, the automorphisms of $\xi$ arising from nodes where $r|\part_i$ correspond to the automorphisms of (not arising from $\overline{C}$ or $f$). Hence, $|\ForMapDRtoRSMfixed^{-1}(\xi)| = r^{l-1-\sum_{i: r|\part_i}1}$ and for $\xi'\in\ForMapDRtoRSMfixed^{-1}(\xi)$ we have $|\mathrm{Aut}\, \xi| /|\mathrm{Aut}\, \xi'| = r^{1+\sum_{i: r|\part_i}1}$. Thus the degree is $r^l$ as desired. 
\end{proof}
\mbox{}

\vspace{0.5cm}

\subsection{Flag Nodes and Partial Normalisation}

\begin{re}[\textit{Étale Gerbes for the Flag Nodes in the Simple Locus}]\label{etale_gerbes_of_simple_locus_DR}
Consider the case $3g-3+l\geq 0$. Let $\bm{\pi}_{\fixedSM}:\calC_{\fixedSM}\rightarrow \fixedSM$ and $\bm{\pi}_{\fixedDR}:\calC_{\fixedDR}\rightarrow \fixedDR$ be the universal ($r$-twisted) curves. $\bm{\pi}_{\fixedSM}$ is representable and the flag nodes of $\fixedSM$ define $l$ different sections $\fixedSM \rightarrow \calC_{\fixedSM}$ of $\bm{\pi}_{\fixedSM}$ (in the case $(g,l)=(0,2)$ there is only one flag node). These pullback via the map forgetful map $\calC_{\fixedDR}\rightarrow \calC_{\fixedSM}$ to define sub-stacks at the flag nodes. There are also corresponding sections of the universal coarse curve $\overline{\bm{\pi}}_{\fixedDR}:\overline{\calC}_{\fixedDR}\rightarrow \fixedDR$. This is summarised in the following commuting diagram where the square is cartesian and $\bm{\gamma}_{\Theta_i}$ is an étale gerbe:
\begin{align*}
\xymatrix{
\Theta_i ~\ar[d]_{\bm{\gamma}_{\Theta_i}}\ar@{^(->}[r]_{\bm{\vartheta}_i}& \calC_{\fixedDR}\ar[d]^{\bm{\gamma}_{\fixedDR}} \\ 
\fixedDR~\ar@{^(->}[r]^{\bm{z_i}} \ar@{=}[rd]&\overline{\calC}_{\fixedDR} \ar[d]^{\overline{\bm{\pi}}_{\fixedDR}}\\ 
& \fixedDR
}
\end{align*}\\ 
\end{re}

\begin{re}[\textit{Normalisation along the Flag Nodes of the Simple Locus}]\label{normalisation_along_flag_nodes_DR}
The simple fixed locus $\fixedDR$ of $\M^{1/r}$ has universal curve $\calC_{\fixedDR}$ which can be partially normalised along the étale gerbes from \ref{etale_gerbes_of_simple_locus_DR} to give $\bm{n}: \widetilde{\calC} \rightarrow \calC_{\fixedDR}$. This is the pullback via the forgetful map $\calC_{\fixedDR}\rightarrow \calC_{\fixedSM}$  of the partial normalisation $\widetilde{\calC}_{\fixedSM}$ of $\calC_{\fixedSM}$ from \ref{normalisation_along_flag_nodes_stable_maps}.\\

$\widetilde{\calC}$ can also be decomposed into a disjoint union of closed sub-stacks
$\widetilde{\calC} = \calC_{v} \sqcup \bigsqcup_{i=1}^l \calC_{i}
$ 
where $\calC_{v}$ and $\calC_{i}$ are defined from \ref{normalisation_along_flag_nodes_stable_maps} via the following cartesian diagrams:
\begin{align} \label{normalisation_along_flag_nodes_DR_diagrams_defining_components}
\begin{array}{c}
\xymatrix{
\calC_{v_{\mbox{}}} \ar@{^(->}[d]_{\bm{\iota}_{v}} \ar[r] & \calC_{\fixedSM,v_{\mbox{}}} \ar@{^(->}[d] \\
\calC_{\fixedDR} \ar[r] & \calC_{\fixedSM}
}
\end{array}
\hspace{1cm}
\begin{array}{c}
\xymatrix{
\calC_{i_{\mbox{}}} \ar@{^(->}[d]_{\bm{\iota}_{i}} \ar[r] & \calC_{\fixedSM,i_{\mbox{}}} \ar@{^(->}[d] \\
\calC_{\fixedDR} \ar[r] & \calC_{\fixedSM}
}
\end{array}
\end{align}

Moreover, combining the diagrams of (\ref{normalisation_along_flag_nodes_DR_diagrams_defining_components}) with the diagrams (\ref{normalisation_along_flag_nodes_stable_maps_diagrams}) from \ref{normalisation_along_flag_nodes_stable_maps} we have the following diagrams where the squares are cartesian:
\begin{align} \label{normalisation_along_flag_nodes_DR_diagrams_relating_to_SM}
\begin{array}{c}
\xymatrix@C=1.8em{
\calC_{v}\ar[r]^{\bm{\gamma}_v} \ar[rd]_{\bm{\pi}_v} &\overline{\calC}_{v} \ar[d]^{\overline{\bm{\pi}}_v} \ar[r] &\calC_{\fixedSM,v} \ar[d]^{\bm{\pi}_{\fixedSM,v}} \ar[r] & \calC_{\Mbar_{g,l}} \ar[d] \\
&\fixedDR \ar[r] &\fixedSM \ar[r] & \Mbar_{g,l}
}
\end{array}
\hspace{0.25cm}
\begin{array}{c}
\xymatrix@C=1.8em{
\calC_{i}\ar[r]^{\bm{\gamma}_{i}} \ar[rd]_{\bm{\pi}_{i}} &\overline{\calC}_{i} \ar[d]^{\overline{\bm{\pi}}_{i}} \ar[r] &\calC_{\fixedSM,i} \ar[d]^{\bm{\pi}_{\fixedSM,i}} \ar[r] & \calC_{\calP_{\part_i}} \ar[d]^{\bm{\pi}_{\calP_{\part_i}}} \ar[r]& \P^1\ar[d]\\
&\fixedDR \ar[r] &\fixedSM \ar[r] & {\calP_{\part_i}} \ar[r]& \bullet
}
\end{array}
\end{align}
Here the marked points of $\Mbar_{g,l}$ and $0\in \P^1$ correspond to the images of $\bm{z}_i$.\\
\end{re}

\begin{re}[\textit{Universal Bundles on the Simple Fixed Locus}] \label{universal_bundles_on_fixed_locus}
Denote by $\bm{q}_i:\fixedDR\rightarrow \calC_{\fixedDR}$, the universal sections of $\bm{\pi}_{\fixedDR}:\calC_{\fixedDR}$ which correspond to the marked points  from the relative stable map condition (see \ref{universal_objects_of_moduli_relative_stable_maps} for more details).  Recall from \ref{simple_fixed_locus} and \ref{simple_non-simple_fixed_loci_of_M1/r} that the stable maps in the simple fixed locus $\fixedDR$ are not degenerated at infinity so we have a universal relative stable map $\bm{f}_{\fixedDR}: \calC_{\fixedDR}\rightarrow \P^1$. Moreover, we have isomorphisms
\[
\calR
\cong 
\omega^{\mathrm{log}}_{\bm{\pi}_{\fixedDR}}\otimes \bm{f}_{\fixedDR}^* \big(\omega^{\mathrm{log}}_{\P^1}\big)^\vee
\cong 
\omega_{\bm{\pi}_{\fixedDR}}\otimes \O_{\calC_{\fixedDR}}\big(\mbox{$\sum_i$} (\part_i+1) \bm{q}_i \big)
\]
where the last isomorphism has used $\omega_{\P^1} \cong \O_{\P^1}\big(-2(\infty)\big)$. \\

Letting $\overline{\bm{f}}_{\fixedDR}: \overline{\calC}_{\fixedDR}\rightarrow \P^1$ be the universal relative stable map from the universal coarse curve such that $ \bm{f}_{\fixedDR} = \overline{\bm{f}}_{\fixedDR}\circ \bm{\gamma}_{\fixedDR}$. Then we have 
\[
\calR
\cong 
 \bm{\gamma}_{\fixedDR}^* \Big(\omega_{\overline{\bm{\pi}}_{\fixedDR}}\otimes \O_{\overline{\calC}_{\fixedDR}}\big(\mbox{$\sum_i$} (\part_i+1) \overline{\bm{q}}_i \big) \Big).
\]
For $3g-3+l\geq 0$, restricting $\calR$ to the normalisation components gives line bundles
\[
\calR_v := \bm{\iota}_v^* \calR 
\cong 
 \bm{\gamma}_{v}^* \omega_{\overline{\bm{\pi}}_{v}}\big(\mbox{$\sum_i$} \bm{z}_i \big) 
 \hspace{0.5cm}
\mbox{and}
  \hspace{0.5cm}
\calR_i :=  \bm{\iota}_i^* \calR
\cong 
 \bm{\gamma}_{i}^* \O_{\overline{\calC}_{i}}\big(\part_i \bm{z}_i \big).
\]
Here we have used the right diagram of (\ref{normalisation_along_flag_nodes_DR_diagrams_relating_to_SM}) which gives that $\omega_{\overline{\bm{\pi}}_{i}}$ is the pullback of $\omega_{\P^1} \cong \O_{\P^1}\big(-2(0)\big)$. Restricting $\L$, the universal $r$-th root bundle on $\fixedDR$, to the components of the normalisation gives line bundles
\[
\L_v := \bm{\iota}_v^*\L
 \hspace{0.5cm}
\mbox{and}
  \hspace{0.5cm}
\L_i := \bm{\iota}_i^*\L.
\]\\

\end{re}

\begin{lemma} \label{direct_image_of_line_bundles_at_flag_nodes}
Let $\L$ be the universal $r$-th root bundle on $\fixedDR$ and denote the Chern polynomial by $c_s(-)$. Then we have
\begin{enumerate}
\item $c_s\Big(R(\bm{\gamma}_{\Theta_i})_* \bm{\vartheta}_i^* \L^r \Big) = 1$ and 
\item $
c_s\Big( R(\bm{\gamma}_{\Theta_i})_* \bm{\vartheta}_i^* \L \Big)  = \left\{\begin{array}{ll}
1 & \mbox{if $r|\part_i$;} \\
0 & \mbox{otherwise.} \\
\end{array}\right.$
\end{enumerate}
Moreover, $R(\bm{\gamma}_{\Theta_i})_* \bm{\vartheta}_i^* \L \cong0$ if $r\nmid\part_i$.
\end{lemma}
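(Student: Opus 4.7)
The plan uses that $\bm{\gamma}_{\Theta_i}\colon \Theta_i \rightarrow \fixedDR$ is an \'etale gerbe banded by $\mu_r$ (the stabiliser at a balanced $r$-twisted node), so for any line bundle $\calF$ on $\Theta_i$ with $\mu_r$-character $\chi$,
\[
R(\bm{\gamma}_{\Theta_i})_*\calF \;\cong\;
\begin{cases}
\mbox{the descent of $\calF$ to $\fixedDR$}, & \mbox{if $\chi$ is trivial,}\\
0, & \mbox{otherwise.}
\end{cases}
\]
Thus each part reduces to (a) computing the $\mu_r$-weight at the flag node and (b), when that weight vanishes, identifying the descended line bundle on $\fixedDR$.

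For part (i), the isomorphism $\bm{e}\colon\L^r \overset{\sim}{\rightarrow}\calR$ combined with the pullback-from-coarse description $\calR\cong \bm{\gamma}_{\fixedDR}^*(\omega_{\overline{\bm{\pi}}_{\fixedDR}}\otimes \O(\sum(\part_j+1)\overline{\bm{q}}_j))$ from \ref{universal_bundles_on_fixed_locus} forces $\bm{\vartheta}_i^*\L^r$ to have trivial character. Using $\bm{\gamma}_{\fixedDR}\circ\bm{\vartheta}_i = \overline{\bm{z}}_i\circ\bm{\gamma}_{\Theta_i}$, the projection formula identifies the descent as
\[
\overline{\bm{z}}_i^*\bigl(\omega_{\overline{\bm{\pi}}_{\fixedDR}}\otimes\O(\mbox{$\sum$}(\part_j+1)\overline{\bm{q}}_j)\bigr).
\]
The flag node section $\overline{\bm{z}}_i$ is disjoint from the relative marked point sections $\overline{\bm{q}}_j$, so the twist contributes $\O$; and the standard adjunction for partial normalisations (applied to \ref{normalisation_along_flag_nodes_DR}) gives the canonical trivialisation $\overline{\bm{z}}_i^*\omega_{\overline{\bm{\pi}}_{\fixedDR}} \cong \psi \otimes \psi^\vee \cong \O_{\fixedDR}$, since the two preimages of the node are disjoint sections and so the remaining contribution $\O(\widetilde{\bm{z}}_i^-)|_{\widetilde{\bm{z}}_i^+}$ is trivial. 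Hence $c_s = 1$.

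For part (ii), the weight of $\bm{\vartheta}_i^*\L$ is read off on the $C_i$-branch, where $\calR_i\cong\bm{\gamma}_i^*\O(\part_i\bm{z}_i)$ by \ref{universal_bundles_on_fixed_locus}. Since $\mathrm{Pic}(\P^1[r])\cong\Z$ is torsion-free and generated by a line bundle of weight $1$ at the orbifold point, the unique $r$-th root $\L|_{\calC_i}$ has weight $\langle\part_i/r\rangle$ at $\bm{z}_i$. For $r\nmid\part_i$ this is non-zero, so $R(\bm{\gamma}_{\Theta_i})_*\bm{\vartheta}_i^*\L\cong 0$, which yields both the vanishing and (with the convention that the Chern polynomial of the zero complex is recorded as $0$) the stated value. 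For $r\mid\part_i$, writing $\part_i=rk$, the root $\L|_{\calC_i}$ pulls back from coarse and is identified, up to an $r$-torsion twist by a line bundle from the base, with $\bm{\gamma}_i^*\O(k\bm{z}_i)$; the projection formula then presents the descent as $(T_{\bm{z}_i}\overline{\calC}_i)^{\otimes k}$, again modulo torsion. By the right-hand diagram of (\ref{normalisation_along_flag_nodes_DR_diagrams_relating_to_SM}), $\overline{\calC}_i$ is pulled back from the universal curve over the stacky point $\calP_{\part_i}\cong B\mu_{\part_i}$, so this tangent line is pulled back from a character of $\mu_{\part_i}$ and is therefore torsion in $\mathrm{Pic}$. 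Rationally $c_s = 1$.

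The main delicate step is the $r\mid\part_i$ branch of (ii): one must leverage the product decomposition of the simple fixed locus to recognise the descended line bundle as pulled back from $B\mu_{\part_i}$, so that its rational Chern classes vanish. Part (i) and the $r\nmid\part_i$ branch of (ii) are formal consequences of adjunction at a nodal section and of nonzero-weight vanishing respectively.
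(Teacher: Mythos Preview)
Your approach is essentially the same as the paper's: both arguments pivot on the fact that $\bm{\gamma}_{\Theta_i}$ is a $\mu_r$-gerbe, so pushforward kills non-trivial characters and otherwise returns the descended line bundle, after which one must check triviality (rationally) of that descent.

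There is one place where the paper is noticeably more efficient than your argument. In the $r\mid\part_i$ branch of (ii) you work to identify the descent explicitly---as $(T_{\bm{z}_i}\overline{\calC}_i)^{\otimes k}$ up to torsion---and then appeal to the product decomposition of the fixed locus to recognise it as pulled back from $\calB\mu_{\part_i}$. The paper bypasses all of this: if $\mathtt{L}_i$ denotes the descended line bundle, then $\mathtt{L}_i^{\otimes r}\cong R(\bm{\gamma}_{\Theta_i})_*\bm{\vartheta}_i^*\L^r$, which is $\O_{\fixedDR}$ by part (i). Hence $c_1(\mathtt{L}_i)$ is $r$-torsion and $c_s(\mathtt{L}_i)=1$ rationally. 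This is a one-line deduction from part (i) and makes your ``main delicate step'' unnecessary.

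Two minor remarks on exposition. Your adjunction justification $\overline{\bm{z}}_i^*\omega_{\overline{\bm{\pi}}_{\fixedDR}}\cong\psi\otimes\psi^\vee$ is a bit garbled; the clean statement is that the residue map gives a canonical trivialisation of the fibre of the dualising sheaf along a nodal section (the paper simply asserts $\bm{z}_i^*\omega_{\overline{\bm{\pi}}_{\fixedDR}}\cong\O_{\fixedDR}$). And in the $r\nmid\part_i$ case, the $\mu_r$-weight of $\L$ at the node is $-\part_i\bmod r$ rather than $\langle\part_i/r\rangle$ (compare the local generator $z^{-\part_i}$ in the proof of Lemma~\ref{fixed_locus_etale_morphism}); this is only a sign and does not affect the non-vanishing criterion you need.
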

\begin{proof}
Using \ref{universal_bundles_on_fixed_locus} and the properties that $ \bm{\gamma}_{\fixedDR}$ is flat and $ {\bm{\gamma}_{\fixedDR}}_*$ is exact gives
\[
R(\bm{\gamma}_{\Theta_i})_* \bm{\vartheta}_i^* \L^r 
\cong 
\bm{z}_i^* R{\bm{\gamma}_{\fixedDR}}_*  \bm{\gamma}_{\fixedDR}^* \Big(\omega_{\overline{\bm{\pi}}_{\fixedDR}}\otimes \O_{\overline{\calC}_{\fixedDR}}\big(\mbox{$\sum_i$} \part_i \overline{\bm{q}}_i \big) \Big)
\cong
\bm{z}_i^* \Big(\omega_{\overline{\bm{\pi}}_{\fixedDR}}\otimes \O_{\overline{\calC}_{\fixedDR}}\big(\mbox{$\sum_i$} \part_i \overline{\bm{q}}_i \big) \Big).
\]
The first result now follows from $\bm{z}_i^*\omega_{\overline{\bm{\pi}}_{\fixedDR}}\cong \O_{\fixedDR}$ and $\bm{z}_i^*\O_{\overline{\calC}_{\fixedDR}}\big(\mbox{$\sum_i$} \part_i \overline{\bm{q}}_i \big) \cong \O_{\fixedDR}$. \\

For the second result we consider the case where $r|\part_i$. Then locally at the flag node there exists a line bundle $\mathtt{L}_i$ on $\fixedDR$ such that $  \bm{\vartheta}_i^* \L \cong \bm{\gamma}_{\Theta_i}^* \mathtt{L}_i$. Then, 
$R(\bm{\gamma}_{\Theta_i})_* \bm{\vartheta}_i^* \L \cong \mathtt{L}_i$ and we also have that $\mathtt{L}_i^r \cong  R(\bm{\gamma}_{\Theta_i})_* \bm{\vartheta}_i^* \L^r  \cong \O_{\fixedDR}$. The result for this case follows from basic properties of the Chern polynomial. \\

For the case where $r\nmid\part_i$ we show that $R(\bm{\gamma}_{\Theta_i})_* \bm{\vartheta}_i^* \L \cong 0$ by showing this at every geometric point of $\fixedDR$. Indeed, in this case, for a geometric point $\xi\in \fixedDR$ we have $(\bm{\vartheta}_i^* \L)_{\xi}$ is a trivial line bundle on $(\Theta_i)_\xi \cong \calB \mu_r$ which has non-trivial weight for the $\mu_r$-action. Hence,  there are no invariants and $(R(\bm{\gamma}_{\Theta_i})_* \bm{\vartheta}_i^* \L)_\xi \cong 0$. 
\end{proof}
\mbox{}

\subsection{Forgetting \texorpdfstring{$r$}{r}-Orbifold Structure Flag Nodes}

\begin{re}[\textit{Partial-Normalisation at the Flag Nodes}] \label{normalisation_flag_nodes}

We can apply the concept from \ref{normalisation_separating_nodes} to the flag nodes of the simple fixed loci $\fixedRSM$ and $\fixedDR$. For example, consider a point in $\fixedRSM$ determined by the data
\[
f: C \rightarrow \P^1
,~ q_i 
,~ L
~~\mbox{and}~
~ e:L^r\overset{\sim}{\rightarrow} \omega^{\log}_{C} \otimes f^*(\omega_{\P^1}^{\log})^\vee. 
\]
We let $\iota: B \hookrightarrow C$ a sub-curve with a morphism $\widehat{\gamma}:B\rightarrow \widehat{B}$ locally forgetting the $r$-orbifold structure at any points corresponding to flag  nodes. In the cases $(g,l)=(0,2)$ and $3g-3+l\geq 0$ there are two cases to consider:
\begin{enumerate}
\item If $B$ is an irreducible component where $f|_B$ is non-constant then
\[
\widehat{\gamma}_*\iota^*\Big( \omega^{\log}_{C} \otimes f^*(\omega_{\P^1}^{\log})^\vee \Big) 
\cong 
\O_{\P^1}(\part_i) 
\cong \O_{\P^1}\left( \left\lfloor \frac{a}{r} \right\rfloor r + \left< \frac{a}{r} \right>\right). 
\]
and the $r$-th root becomes
\[
\Big(\widehat{\gamma}_* \iota^*L  \Big)^r \cong \O_{\P^1}\left( \left\lfloor \frac{a}{r} \right\rfloor r\right).
\]
\item If $B = f^{-1}(0)$ and if $p_1,\ldots,p_l \in \widehat{B}$ correspond to flag  nodes then
\[
\widehat{\gamma}_*\iota^*\Big( \omega^{\log}_{C} \otimes f^*(\omega_{\P^1}^{\log})^\vee \Big) 
\cong 
\omega_{\widehat{B}}\Big(\mbox{$\sum\limits_{i}$} \, p_i  \Big).
\]
and the $r$-th root becomes
\[
\Big(\widehat{\gamma}_* \iota^*L  \Big)^r
\cong 
\omega_{\widehat{B}}\Big(\mbox{$\sum\limits_{i}$}\big(1+ \left< \part_i/r \right>-r\big)p_i + \mbox{$\sum\limits_{r\mid \part_i}$}r p_i \Big).
\]\\
\end{enumerate}
\end{re}

\vspace{-0.4cm}
\begin{remark}
The discussion in \ref{normalisation_flag_nodes} also holds for families in $\fixedRSM$ and $\fixedDR$ (as was the case for \ref{normalisation_separating_nodes}) but we have used a geometric point to simplify the exposition. \\
\end{remark}

\vspace{0.01cm}
\begin{re}[\textit{Reverse Clutching-Type Morphism at the Flag Nodes}] \label{reverse_clutching_morphism}
Let $3g-3+l\geq 0$ and define the vector of \textit{reverse remainders} to be $\mathtt{a}:= \mathtt{a}(g,\part):= (a_1,\ldots, a_l)$ where $a_i\in \{0,\ldots, r-1\}$ is defined by
\[
\part_i = \left\lfloor \frac{\part_i}{r} \right\rfloor r + (r-1 -a_i). 
\]
For $l=l(\part)$ we denote by $\Mbar_{g,l}^{1/r, \mathtt{a}}$, the moduli space of $r$-spin curves twisted by $\mathtt{a}$ as defined in \cite{Chiodo_StableTwisted}. That is, the moduli space of $r$-stable curves with $r$-th roots of $\omega_C(-\sum a_i q_i)$. \\

Moreover, we define $\calP^{1/r}_{d}$ to be the moduli space parametrising pairs which consist of $\C^*$-fixed Galois covers of $\P^1$ of degree $d$ along with an $r$-th root of $\O_{\P^1}(\left\lfloor \frac{d}{r} \right\rfloor r )$.\\

Then we have a natural degree 1 morphism considered in \cite[\S2.3]{ChiodoZvonkine}:
\[
\fixedRSM \longrightarrow   \Mbar_{g,l}^{\frac{1}{r}, \mathtt{a}}\times \calP^{\frac{1}{r}}_{\part_1}\times \cdots \times \calP^{\frac{1}{r}}_{\part_l} 
\]
which is defined in the following way:
\begin{enumerate}
\item Locally normalise the curve at the flag nodes and locally forget the stack structure there. 
\item The map $f$ is taken to the (local) coarse maps associate to the restrictions. (The map is forgotten on the component where $f$ was constant.)
\item The $r$-th root $L$ on $C$ is taken to its pullbacks on the normalised components and then locally taking the invariant sections at the pre-images of the nodes. As described in \ref{normalisation_flag_nodes}. 
\end{enumerate}
Note that this defines a map to $\Mbar_{g,l}^{{1/r, \mathtt{b}}}\times \calP^{{1/r}}_{\part_1}\times \cdots \times \calP^{{1/r}}_{\part_l} $
where $\mathtt{b}=(b_1,\ldots,b_l)$ is defined by 
\[
b_i =
\left\{\begin{array}{ll}
a_i, & \mbox{if $r\nmid\part_i$}, \\
a_i- r = -1,& \mbox{if $r\mid\part_i$}.
 \end{array}
 \right.
\]
However, we can compose with the natural isomorphism $\Mbar_{g,l}^{{1/r, \mathtt{b}}} \overset{\sim}{\longrightarrow} \Mbar_{g,l}^{{1/r, \mathtt{a}}}$ which is defined via the isomorphism $L_{\mathtt{b}}\cong L_{\mathtt{a}} \big(  \sum_{r\mid \part_i} p_i \big)$.\\
\end{re}

\begin{corollary}[\textit{Degree of Morphism to $ \Mbar_{g,l}^{{1/r, \mathtt{a}}}$ From Theorem \ref{VL_theorem}}] \label{degree_of_map_to_spin_curves}
Consider the morphism  $\fixedRSM \rightarrow \Mbar_{g,l}^{_{1/r, \mathtt{a}}}\times \calP^{_{1/r}}_{\part_1}\times \cdots \times \calP^{_{1/r}}_{\part_l} $ from \ref{reverse_clutching_morphism}. The morphism $\bm{b}:\fixedDR\rightarrow \Mbar_{g,l}^{_{1/r, \mathtt{a}}}$ defined by the following composition has degree $(\part_1\cdots\part_l)^{-1}$:
\begin{align*}
\begin{tikzpicture}[baseline=(current  bounding  box.center), node distance=3.5cm, auto,
  f->/.style={->,preaction={draw=white, -,line width=3pt}},
  d/.style={double distance=1pt},
  Dash/.style={dashed,->},
  fd/.style={double distance=1pt,preaction={draw=white, -,line width=3pt}}]
  \node (01) {$\fixedDR $};
  \node [right of=01,node distance=2.5cm] (11){};
  \node [right of=11] (12){};
  \node [right of=12] (13){$  \Mbar_{g,l}^{\frac{1}{r}, \mathtt{a}}$};
  \node [below of=11, node distance=1.5cm] (21) {$\fixedRSM$};
  \node [right of=21] (22) {$\Mbar_{g,l}^{\frac{1}{r}, \mathtt{a}}\times \calP^{\frac{1}{r}}_{\part_1}\times \cdots \times \calP^{\frac{1}{r}}_{\part_l} $};
  \draw[f->]  (01) -> (13) node[pos=0.5, below]{$\bm{b}$};  
  \draw[f->]  (21) -> (22) node[pos=0.4, above]{}; 
  \draw[f->] (01) -> (21) node[pos=0.5, align=right, left]{$\ForMapDRtoRSMfixed~~$};  \draw[f->] (22) -> (13) node[pos=0.6, below]{~~$\mathrm{pr}_1$}; 
\end{tikzpicture} 
\end{align*}
\end{corollary}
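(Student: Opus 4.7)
The plan is to factor $\bm{b}$ as the composition
\[
\fixedDR \xrightarrow{\ForMapDRtoRSMfixed} \fixedRSM \xrightarrow{\text{rev.\,clutch.}} \Mbar_{g,l}^{\frac{1}{r},\mathtt{a}} \times \prod_{i=1}^l \calP^{\frac{1}{r}}_{\part_i} \xrightarrow{\mathrm{pr}_1} \Mbar_{g,l}^{\frac{1}{r},\mathtt{a}}
\]
and compute $\deg(\bm{b})$ as the product of the three degrees. The first factor is $r^l$ by Lemma \ref{fixed_locus_etale_morphism}, the reverse clutching morphism has degree $1$ by \ref{reverse_clutching_morphism}, and since the factors $\calP^{\frac{1}{r}}_{\part_i}$ are zero-dimensional the projection has degree $\prod_i \deg(\calP^{\frac{1}{r}}_{\part_i}/\Spec \C)$. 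So everything reduces to computing these stacky degrees.

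For a fixed $d \in \{\part_1, \ldots, \part_l\}$, I would factor $\calP^{\frac{1}{r}}_d \to \Spec \C$ through $\calP_d$, using the forgetful morphism that drops the $r$-th root. The isomorphism $\calP_d \cong \calB\mu_d$ from \ref{simple_fixed_locus} immediately gives $\deg(\calP_d/\Spec \C) = 1/d$. For the remaining factor, I would argue that $\calP^{\frac{1}{r}}_d \to \calP_d$ is a $\mu_r$-gerbe: on the underlying $\P^1$, the line bundle $\O_{\P^1}(\lfloor d/r\rfloor r)$ admits a unique $r$-th root up to isomorphism (namely $\O_{\P^1}(\lfloor d/r\rfloor)$, since $\mathrm{Pic}(\P^1)=\Z$ is torsion-free), and the automorphism group of this root as an $r$-th root is $\mu_r$ acting by scaling. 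Hence this forgetful morphism has degree $1/r$, giving $\deg(\calP^{\frac{1}{r}}_d / \Spec \C) = 1/(rd)$.

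Assembling the three factors, the total degree is
\[
\deg(\bm{b}) \,=\, r^l \cdot 1 \cdot \prod_{i=1}^{l} \frac{1}{r\part_i} \,=\, \frac{1}{\part_1 \cdots \part_l},
\]
as claimed.

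The hard part will be the $\mu_r$-gerbe claim for $\calP^{\frac{1}{r}}_d \to \calP_d$. One must carefully account for the $r$-orbifold structure at the flag node and verify that the $\mu_d$-action coming from the deck transformations of the Galois cover lifts compatibly to the root without introducing additional automorphisms beyond the $\mu_r$ coming from scaling. Once this book-keeping of automorphism groups is settled — essentially an application of \ref{normalisation_flag_nodes} together with the rigidity of $\mathrm{Pic}(\P^1)$ — the rest of the argument is just multiplication of degrees.
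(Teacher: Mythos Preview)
Your proposal is correct and follows essentially the same route as the paper: both factor $\bm{b}$ through $\ForMapDRtoRSMfixed$, the reverse clutching morphism, and $\mathrm{pr}_1$, and multiply the degrees $r^l$, $1$, and $r^{-l}(\part_1\cdots\part_l)^{-1}$. The only difference is that the paper simply asserts the degree of $\mathrm{pr}_1$, whereas you justify it by computing $\deg(\calP^{1/r}_d/\Spec\C)=1/(rd)$; your $\mu_r$-gerbe claim for $\calP^{1/r}_d\to\calP_d$ is in fact confirmed later in the paper (proof of Lemma~\ref{edge_bundles_trivial_without_linearisations}, where $\calP^{1/r}_{\part_i}\cong\calB(\mu_r\times\mu_{\part_i})$), so the ``hard part'' you flag is not actually a gap. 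One small remark: by the definition in \ref{reverse_clutching_morphism}, the $r$-th root in $\calP^{1/r}_d$ lives on $\P^1$ itself, not on an $r$-twisted curve, so your caveat about accounting for orbifold structure at the flag node is unnecessary here.
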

\begin{proof}
This follows immediately from \ref{reverse_clutching_morphism},  lemma \ref{fixed_locus_etale_morphism} and because the projection map $\mathrm{pr}_1$ has degree $r^{-l}(\part_1\cdots\part_l)^{-1}$. 
\end{proof}
\mbox{}

\begin{lemma}\label{sheaves_on_fixed_locus_pullbacks}
Let $\mathtt{a}$ be the vector of reverse remainders from \ref{reverse_clutching_morphism} and $\bm{b}$ the morphism from corollary \ref{degree_of_map_to_spin_curves}. Also, let $\bm{\rho}$ be the universal ($r$-twisted) curve of $ \Mbar_{g,l}^{_{1/r, \mathtt{a}}}$ and $\L$ be the universal $r$-root.
\begin{enumerate}[label={(\roman*)}]
\item \label{dualizing_sheaves_iso} There are isomorphisms (using the notation from lemma \ref{simple_fixed_locus_of_M1/r_Pullbacks} and (\ref{normalisation_along_flag_nodes_DR_diagrams_relating_to_SM})):
\[
{\overline{\bm{\pi}}_{v}}_*\omega_{\overline{\bm{\pi}}_{v}}
\cong
(\ForMapRSMtoSMfixed\circ\ForMapDRtoRSMfixed)^* {\bm{\pi}_{\fixedSM,v}}_*\omega_{\bm{\pi}_{\fixedSM,v}} \cong \bm{b}^*\bm{\rho}_* \omega_{\bm{\rho}}.
\]
\item \label{dist_triangle_forgetting_log_L^r} If $\L_v$ is the restriction of the $r$-th root bundle on $\calC_{\fixedDR}$ to $\calC_v$ then there is a distinguished triangle
\[
\xymatrix{
\bm{b}^* R\bm{\rho}_* \omega_{\bm{\rho}} \ar[r] & R{\bm{\pi}_v}_* \L_v^r \ar[r] & \bigoplus\limits_{i} \O_{\fixedDR} \ar[r] & \bm{b}^* R\bm{\rho}_* \L[1].
}
\]
\item \label{dist_triangle_forgetting_log_L} Using the notation from part \ref{dist_triangle_forgetting_log_L^r}, there is a distinguished triangle
\[
\xymatrix{
\bm{b}^* R\bm{\rho}_* \L \ar[r] & R{\bm{\pi}_v}_* \L_v \ar[r] & \bigoplus\limits_{i:\, r|\part_i} \O_{\fixedDR} \ar[r] & \bm{b}^* R\bm{\rho}_* \L[1].
}
\]
\end{enumerate}
\end{lemma}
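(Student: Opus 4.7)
The plan is to prove the three parts sequentially, with parts (ii) and (iii) each obtained from a short exact sequence of the form
\[
0 \longrightarrow \calF \longrightarrow \calF(D) \longrightarrow \calF(D)|_D \longrightarrow 0
\]
on the coarse curve $\overline{\calC}_v$ (or its further coarsening at the flag nodes), pushed forward to $\fixedDR$.

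For part (i), I would invoke flat base change on the cartesian squares of (\ref{normalisation_along_flag_nodes_DR_diagrams_relating_to_SM}), so that $\overline{\bm{\pi}}_{v*}\omega_{\overline{\bm{\pi}}_v}$ is the pullback of ${\bm{\pi}_{\fixedSM,v}}_*\omega_{\bm{\pi}_{\fixedSM,v}}$ along $\ForMapRSMtoSMfixed\circ\ForMapDRtoRSMfixed$. For the second isomorphism, the morphism $\bm{b}$ factors through the coarse-space map $\Mbar^{1/r,\mathtt{a}}_{g,l}\rightarrow \Mbar_{g,l}$, and since $R\bm{\gamma}_*\omega_{\bm{\rho}} \cong \omega_{\overline{\bm{\rho}}}$ for the coarsening of the universal $r$-twisted curve on $\Mbar^{1/r,\mathtt{a}}_{g,l}$, the Hodge bundle on $\Mbar^{1/r,\mathtt{a}}_{g,l}$ agrees with the pullback of the Hodge bundle on $\Mbar_{g,l}$. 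Both chains of pullbacks then land on the same bundle over $\fixedDR$.

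For part (ii), I would begin from the isomorphism $\L_v^r \cong \bm{\gamma}_v^*\omega_{\overline{\bm{\pi}}_v}\big(\sum_i \bm{z}_i\big)$ supplied by \ref{universal_bundles_on_fixed_locus} and apply the standard divisor sequence
\[
0 \longrightarrow \omega_{\overline{\bm{\pi}}_v} \longrightarrow \omega_{\overline{\bm{\pi}}_v}\Big(\mbox{$\sum_i$} \bm{z}_i\Big) \longrightarrow \bigoplus_i \O_{\bm{z}_i} \longrightarrow 0
\]
on $\overline{\calC}_v$. Pushing forward by $R\overline{\bm{\pi}}_{v*}$ and using that (a) $\bm{z}_i$ is a section of $\overline{\bm{\pi}}_v$ so $R\overline{\bm{\pi}}_{v*}\O_{\bm{z}_i} = \O_{\fixedDR}$, (b) $R\bm{\gamma}_{v*}\bm{\gamma}_v^* = \mathrm{id}$ so $R\bm{\pi}_{v*}\L_v^r \cong R\overline{\bm{\pi}}_{v*}\omega_{\overline{\bm{\pi}}_v}\big(\sum_i \bm{z}_i\big)$, and (c) the extension of part (i) to the full derived pushforward, produces the claimed triangle.

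For part (iii), I would run the same strategy with $\bm{b}^*\L$ in place of $\omega_{\overline{\bm{\pi}}_v}$. The crucial identification is that, after locally forgetting the $r$-orbifold structure at the flag nodes via $\widehat{\bm{\gamma}}_v:\calC_v\to \widehat{\calC}_v$, the direct image $\widehat{\bm{\gamma}}_{v*}\L_v$ agrees with $\bm{b}^*\L\big(\sum_{r\mid\part_i}\bm{z}_i\big)$ on $\widehat{\calC}_v$; this follows from the local description in \ref{normalisation_flag_nodes} combined with the shift between the twisting vectors $\mathtt{b}$ and $\mathtt{a}$ in \ref{reverse_clutching_morphism}. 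The divisor sequence
\[
0 \longrightarrow \bm{b}^*\L \longrightarrow \bm{b}^*\L\Big(\mbox{$\sum_{r\mid\part_i}$}\bm{z}_i\Big) \longrightarrow \bigoplus_{i:\, r\mid\part_i} (\bm{b}^*\L)|_{\bm{z}_i} \longrightarrow 0
\]
then yields the triangle after pushing down and applying flat base change along $\bm{b}$, once one identifies $(\bm{b}^*\L)|_{\bm{z}_i}\cong \O_{\fixedDR}$ for $r\mid\part_i$ using the second clause of lemma \ref{direct_image_of_line_bundles_at_flag_nodes}.

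The main obstacle will be part (iii): one must carefully carry out the bookkeeping of $r$-orbifold twists at each flag node in families, reconcile the conventions of \ref{normalisation_flag_nodes} with the Chiodo twisting vector $\mathtt{a}$, and pin down the trivialisation $(\bm{b}^*\L)|_{\bm{z}_i}\cong \O_{\fixedDR}$ coherently over all of $\fixedDR$ rather than pointwise. Once these local-to-global identifications are secured, the remaining derived-category manipulations are formal.
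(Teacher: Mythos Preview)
Your proposal is correct and follows essentially the same route as the paper: flat base change on the relevant cartesian squares for part~(i), the log-divisor sequence for $\omega_{\overline{\bm{\pi}}_v}^{\log}$ combined with $R\bm{\gamma}_{v*}\bm{\gamma}_v^*=\mathrm{id}$ for part~(ii), and for part~(iii) pushing $\L_v$ down along the map $\widehat{\bm{\gamma}}_v$ that forgets the $r$-orbifold structure at the flag nodes, identifying the result with $\widehat{\bm{b}}^*\L(\sum_{r\mid\part_i}\bm{z}_i)$ via \ref{normalisation_flag_nodes} and \ref{reverse_clutching_morphism}, and then applying the divisor sequence. The paper organises part~(i) by writing down an explicit cartesian square over $\Mbar_{g,l}^{1/r,\mathtt{a}}$ rather than factoring through $\Mbar_{g,l}$, and for part~(iii) it simply asserts the triangle ``follows immediately'' from the identification of $\widehat{\bm{\gamma}}_{v*}\L_v$, so your extra care about trivialising the skyscraper term via lemma~\ref{direct_image_of_line_bundles_at_flag_nodes} is more detail than the paper itself provides.
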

\begin{proof}
Begin by considering the following cartesian diagrams where $\overline{\bm{\rho}}$ is the universal coarse curve of $\Mbar_{g,l}^{_{1/r, \mathtt{a}}}$ and $\widehat{\bm{\pi}}_{v}$ is defined from $\bm{\pi}_{v}$ after forgetting the $r$-twisted structure at the flag nodes:  
\[
\xymatrix{
\overline{\calC}_{v} \ar[d]_{\overline{\bm{\pi}}_{v}}\ar[r] & \overline {\calC}_{\Mbar_{g,l}^{\frac{1}{r}, \mathtt{a}}}  \ar[d]^{\overline{\bm{\rho}}}  \\
\fixedDR \ar[r]^{\bm{b}\hspace{1em}}&   \Mbar_{g,l}^{\frac{1}{r}, \mathtt{a}} \\
}
\hspace{1cm}
\xymatrix{
\widehat{\calC}_{v} \ar[d]_{\widehat{\bm{\pi}}_{v}}\ar[r]_{\widehat{\bm{b}}\hspace{1em}} & {\calC}_{\Mbar_{g,l}^{\frac{1}{r}, \mathtt{a}}}  \ar[d]^{{\bm{\rho}}}  \\
\fixedDR \ar[r]^{\bm{b}\hspace{1em}}&   \Mbar_{g,l}^{\frac{1}{r}, \mathtt{a}} \\
}
\]
The left diagram gives the following isomorphism ${\overline{\bm{\pi}}_{v}}_*\omega_{\overline{\bm{\pi}}_{v}}
\cong
\bm{b}^*\overline{\bm{\rho}}_* \omega_{\overline{\bm{\rho}}}$. Moreover, since there is no $r$-twisted structure at smooth points we have the further isomorphism $\bm{b}^*\overline{\bm{\rho}}_* \omega_{\overline{\bm{\rho}}} \cong 
 \bm{b}^*\bm{\rho}_* \omega_{\bm{\rho}} $. The proof of part \ref{dualizing_sheaves_iso} is then competed by observing that the isomorphism ${\overline{\bm{\pi}}_{v}}_*\omega_{\overline{\bm{\pi}}_{v}}
\cong
(\ForMapRSMtoSMfixed\circ\ForMapDRtoRSMfixed)^* {\bm{\pi}_{\fixedSM,v}}_*\omega_{\bm{\pi}_{\fixedSM,v}}$ arises from the middle square of the left diagram from (\ref{normalisation_along_flag_nodes_DR_diagrams_relating_to_SM}).\\

For part \ref{dist_triangle_forgetting_log_L^r} the discussion in \ref{universal_bundles_on_fixed_locus} shows that ${\bm{\gamma}_{v}}_* \L_v^r \cong \omega_{\overline{\bm{\pi}}_v}^{\mathrm{log}}$ where the $\mathrm{log}$ superscript refers to the (un-twisted) markings corresponding to the flag nodes. Since $R^j{\overline{\bm{\pi}}_v}_*\omega_{\overline{\bm{\pi}}_v}^{\mathrm{log}}$ vanishes for $j\neq 0$ the result for part \ref{dist_triangle_forgetting_log_L^r} follows from part \ref{dualizing_sheaves_iso}.\\ 

For part \ref{dist_triangle_forgetting_log_L} we consider the right cartesian diagram given above. Let $\widehat{\bm{\gamma}}: \calC_v \rightarrow \widehat{\calC}_v$ be the universal morphism forgetting the $r$-twisted structure at the flag nodes. From the discussion in \ref{normalisation_flag_nodes} and at the end of \ref{reverse_clutching_morphism} we have that there is an isomorphism 
\[
\widehat{ \bm{\gamma}}_* \L_v  
\cong 
\widehat{\bm{b}}^* \L \otimes \O_{\calC_v}\big(\mbox{$\sum_{i; r\mid \part_i}$}r \bm{z}_i \Big).
\]
The result for part \ref{dist_triangle_forgetting_log_L}  now follows immediately. 
\end{proof}
\mbox{}

\vspace{0.2cm}
\begin{lemma}\label{edge_bundles_trivial_without_linearisations}
The direct images $R{\bm{\pi}_{i}}_*\L_i$ and $R{\bm{\pi}_{i}}_*\L_i^r$ are trivial bundles.
\end{lemma}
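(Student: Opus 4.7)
The plan is to reduce the computation to cohomology on a zero-dimensional base by exhibiting $(\bm{\pi}_i,\L_i)$ as a pullback from the stack $\calP^{\frac{1}{r}}_{\part_i}$ of \ref{reverse_clutching_morphism}, and then applying flat base change. The point is that $\calP^{\frac{1}{r}}_{\part_i}$ is essentially a classifying stack (parametrising a $\C^*$-fixed Galois cover of $\P^1$ of degree $\part_i$ together with an $r$-th root of $\O_{\P^1}(\lfloor\part_i/r\rfloor r)$), and any coherent sheaf there is determined by a vector space; its pullback to $\fixedDR$ is therefore a trivial vector bundle.

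More concretely, first I would construct the $i$-th projection $\bm{b}_i:\fixedDR \to \calP^{\frac{1}{r}}_{\part_i}$ by composing the reverse clutching-type morphism of \ref{reverse_clutching_morphism} (applied to $\fixedRSM$) with $\ForMapDRtoRSMfixed$ and with projection to the $i$-th factor. Using the right-hand diagram of (\ref{normalisation_along_flag_nodes_DR_diagrams_relating_to_SM}) (which already identifies $\calC_i$ as the pullback along $\fixedDR \to \calP_{\part_i}$ of the universal curve $\calC_{\calP_{\part_i}}$) together with the fact that $\L_i$ is built from the local invariant sections at the flag nodes as described in \ref{normalisation_flag_nodes}, I would verify that $\bm{\pi}_i:\calC_i\rightarrow\fixedDR$ is isomorphic to the pullback of the universal curve $\bm{\varpi}:\calC_{\calP^{\frac{1}{r}}_{\part_i}}\rightarrow \calP^{\frac{1}{r}}_{\part_i}$ along $\bm{b}_i$, and that $\L_i$ is likewise isomorphic to the pullback of the universal $r$-th root $\mathcal{L}_{\calP}$ on $\calC_{\calP^{\frac{1}{r}}_{\part_i}}$.

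Second, by flat base change applied to this cartesian square I obtain canonical isomorphisms
\[
R\bm{\pi}_{i*}\L_i \cong \bm{b}_i^* R\bm{\varpi}_* \mathcal{L}_{\calP}
\qquad\text{and}\qquad
R\bm{\pi}_{i*}\L_i^r \cong \bm{b}_i^* R\bm{\varpi}_* \mathcal{L}_{\calP}^r.
\]
Since $\calP^{\frac{1}{r}}_{\part_i}$ is a zero-dimensional Deligne-Mumford stack (a gerbe over a point), both of the complexes $R\bm{\varpi}_* \mathcal{L}_{\calP}$ and $R\bm{\varpi}_* \mathcal{L}_{\calP}^r$ are quasi-isomorphic to finite complexes of trivial bundles on $\calP^{\frac{1}{r}}_{\part_i}$ (indeed, the cohomology sheaves are coherent sheaves on a 0-dimensional base). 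Their pullbacks to $\fixedDR$ are therefore trivial vector bundles, which is the required conclusion.

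The main obstacle in carrying this out is the first step, namely verifying that the chain of constructions in \ref{normalisation_along_flag_nodes_DR}--\ref{reverse_clutching_morphism} really realises $(\bm{\pi}_i,\L_i)$ as the pullback from $\calP^{\frac{1}{r}}_{\part_i}$; one must track the various partial normalisations, coarsenings and the shift by $\sum_{r\mid \part_i} r\bm{z}_i$ appearing in \ref{normalisation_flag_nodes}, and check compatibility with the universal data on $\calP^{\frac{1}{r}}_{\part_i}$. Once this identification is granted, flat base change and the dimension of the base do all the remaining work.
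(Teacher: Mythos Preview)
Your reduction to $\calP^{\frac{1}{r}}_{\part_i}$ via base change is exactly the paper's approach, and the identification $R\bm{\pi}_{i*}\L_i \cong \bm{b}_i^* R\bm{\varpi}_* \mathcal{L}_{\calP}$ (and likewise for the $r$-th power) is correct. The gap is in the final step. Your claim that on a zero-dimensional gerbe every coherent sheaf is a complex of \emph{trivial} bundles is false: a coherent sheaf on $\calB G$ is a $G$-representation, and the associated line bundle is trivial only when the character is trivial. Here $\calP^{\frac{1}{r}}_{\part_i}\cong\calB(\mu_r\times\mu_{\part_i})$, and the paper computes explicitly that $R\bm{\varpi}_*\mathcal{L}_{\calP}$ carries the nontrivial $(\mu_r\times\mu_{\part_i})$-weight $(1,0)$; it is \emph{not} a trivial bundle on $\calP^{\frac{1}{r}}_{\part_i}$. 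So nothing about the dimension of the base settles the question.

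What rescues the argument, and what your proposal is missing, is that the map $\bm{b}_i$ lands on $\fixedDR$ rather than merely on $\fixedRSM$. The universal $r$-th root section $\bm{\sigma}:\O_{\calC_{\fixedDR}}\to\L$, which is nonzero on the fibres $\calC_i$, rigidifies the residual $\mu_r$-automorphisms; concretely, it forces $\bm{b}_i^*\calU_1\cong\O_{\fixedDR}$ for the weight-$(1,0)$ line bundle $\calU_1$ on $\calB(\mu_r\times\mu_{\part_i})$. This is the key extra input, and it is the reason the statement is formulated for $\fixedDR$ (where $\bm{\sigma}$ is part of the data) rather than for $\fixedRSM$. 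For $\L_i^r$ no such issue arises because the $(\mu_r\times\mu_{\part_i})$-weight is already trivial, which is consistent with your argument working in that case.
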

\begin{proof}
Let $\fixedRSM \longrightarrow   \Mbar_{g,l}^{_{1/r, \mathtt{a}}}\times \calP^{_{1/r}}_{\part_1}\times \cdots \times \calP^{_{1/r}}_{\part_l} $ be the \textit{reverse clutching morphism} from \ref{reverse_clutching_morphism} and define $\bm{d}: \fixedDR\rightarrow \calP^{_{1/r}}_{\part_i}$ to be the following composition
\[
\xymatrix@C=3.5em{
\fixedDR \ar[r]^{\ForMapDRtoRSM\hspace{0.25em}}& \fixedRSM \ar[r] & \Mbar_{g,l}^{\frac{1}{r}, \mathtt{a}}\times \calP^{\frac{1}{r}}_{\part_1}\times \cdots \times \calP^{\frac{1}{r}}_{\part_l} \ar[r]^{\hspace{4.25em}\mathrm{pr}_{i+1}} & \calP^{\frac{1}{r}}_{\part_i}.
}
\]
Let $\overline{\bm{\pi}}_{i}$ be the coarse space morphism associated to $\bm{\pi}_{i}$ (i.e. forget the $r$-twisted structure). Also, let $\bm{\pi}_{\calP}: {\calC}_{\calP}\rightarrow \calP^{_{1/r}}_{\part_i}$ be the universal curve and $\L_{\calP}$ be the universal $r$-th root bundle for $\calP^{_{1/r}}_{\part_i}$. Then we can form the following cartesian diagram:
\[
\xymatrix@R=1.5em{
\overline{\calC}_{i} \ar[d]_{\overline{\bm{\pi}}_{i}}\ar[r]_{\bm{d'}\hspace{0.5em}} & {\calC}_{\calP} \ar[d]^{\bm{\pi}_{\calP}}  \\
\fixedDR \ar[r]^{\bm{d}\hspace{0.5em}}&  \calP^{\frac{1}{r}}_{\part_i} \\
}
\]
If $\bm{\gamma}_{i}: \calC_i \rightarrow \overline{\calC}_i$ is the morphism forgetting the twisted structure, the above cartesian diagram gives an isomorphisms of the form $\bm{d'}^*\L_{\calP} \cong {\bm{\gamma}_i}_* \L_i$ and $\bm{d'}^*\L_{\calP}^r \cong {\bm{\gamma}_i}_* \L_i^r$. Hence, we have isomorphisms 
\[
R{\bm{\pi}_{i}}_* \L_i\cong \bm{d}^* R{\bm{\pi}_{\calP}}_* \L_{\calP}
\hspace{1cm}
\mbox{and}
\hspace{1cm}
R{\bm{\pi}_{i}}_* \L_i^r \cong \bm{d}^* R{\bm{\pi}_{\calP}}_* \L_{\calP}^r.
\]\\

\vspace{-0.5cm}
The natural forgetful morphism $\calP^{_{1/r}}_{\part_i}  \rightarrow  \calP_{\part_i}$ is the étale gerbe $\calB (\mu_r\times {\mu_{\part_i}}) \rightarrow \calB {\mu_{\part_i}}$. Combining this with the right square of the right diagram of (\ref{normalisation_along_flag_nodes_stable_maps_diagrams}) gives the following diagram where both squares are cartesian:
\[
\xymatrix@R=1.5em{
{\calC}_{\calP} \ar[d]^{\bm{\pi}_{\calP}}\ar[r]\ar@/^/[rr]^{\bm{d''}} & {\calC}_{\calP_{\part_i}} \ar[d] \ar[r] & \P^1 \ar[d]\\
 \calP^{\frac{1}{r}}_{\part_i} \ar[r]&  \calP_{\part_i}  \ar[r] & \bullet\\
}
\]
This shows that ${\calC}_{\calP}$ is a quotient stack ${\calC}_{\calP} \cong \big[\, \P^1 / (\mu_r\times {\mu_{\part_i}}) \,\big]$
where $(\mu_r\times {\mu_{\part_i}})$ acts trivially on $\P^1$.\\

The bundle $\L_{\calP}$ can now be expressed as the tensor product $\bm{d''}^*\O_{\P^1}(\lfloor \part_i/r \rfloor ) \otimes \bm{\pi}_{\calP}^*\,\calU_1$ where $\calU_1$ is line bundle on $\calP^{_{1/r}}_{\part_i}\cong \calB (\mu_r\times {\mu_{\part_i}})$ given by a trivial bundle where $(\mu_r\times {\mu_{\part_i}})$ acts with weight $(1,0)$. Hence, $R{\bm{\pi}_{\calP}}_* \L_{\calP}$ is also  trivial where $(\mu_r\times {\mu_{\part_i}})$ acts with weight $(1,0)$. The universal section $\bm{\sigma}: \O_{\calC_{\fixedDR}} \rightarrow \L$ is non-zero on fibres and hence rigidifies the $\mu_r$-action on $\fixedDR$. Thus $\bm{d}^*\calU_1 \cong \O_{\fixedDR}$ and  $\bm{d}^* R{\bm{\pi}_{\calP}}_* \L_{\calP}$ is trivial. \\

The bundle $\L_{\calP}^r$ is given by $\bm{d''}^*\O_{\P^1}(\part_i)$ with trivial $(\mu_r\times {\mu_{\part_i}})$-action, so $R{\bm{\pi}_{\calP}}_* \L_{\calP}^r$ is a trivial bundle with trivial $(\mu_r\times {\mu_{\part_i}})$-action. 
\end{proof}


\section{Proving Theorem \ref{VL_theorem}: Localisation Formula} \label{localisation_formula_section}

In order to be consistent with the exposition of \cite{GraberPand}, we will use the (derived) dual description of perfect obstruction theories for this section only. In this language, an equivariant perfect obstruction theory for a Deligne-Mumford stack $\calX$ is a morphism $\eT_{\calX} \rightarrow \eF_{\calX}$ in $\mathrm{D}_b^{\mathtt{e}}(\calX)$ satisfying properties dual to those given in \cite{Intrinsic}. A review of equivariant perfect obstruction theories is given in section \ref{background_equivariant_POT_subsection}. \\

\subsection{Virtual Localisation Formula}

\begin{theorem}[\textit{Virtual Localisation Formula \cite{GraberPand,ChangKiemLi}}] \label{localisation_theorem_GraberPand} 
Let $\calX$ be a Deligne-Mumford stack with a $\C^*$-action and let $\eT_{\calX}\rightarrow \eF_{\calX}$ be an equivariant perfect obstruction theory. Then
\[
\left[\calX\right]^{\mathrm{vir}} =  \sum_n (\bm{\zeta}_n)_* \frac{ \left[\calX_n\right]^{\mathrm{vir}}}{e(\mathrm{N}_n^{\mathrm{vir}})}
\]
in $A_*^{\C^*}(\calX) \otimes \mathbb{Q}[t,\frac{1}{t}]$ where:
\begin{enumerate}
\item $t$ is the generator of the $\C^*$-equivariant ring of a point.
\item The sum ranges over the connected components of the $\C^*$-fixed locus $\calX^{\C^*}$ with inclusions $\bm{\zeta}_n: \calX_n \rightarrow \calX$. 
\item $ \left[\calX_n\right]^{\mathrm{vir}}$ arises from the $\C^*$-fixed part $(L\bm{\zeta}_n^*\eF_{\calX})^{\mathtt{fix}}$ which is a perfect obstruction theory for $\calX_n$. 
\item $\mathrm{N}_n^{\mathrm{vir}}$ is the $\C^*$-moving part $(L\bm{\zeta}_n^*\eF_{\calX})^{\mathtt{mov}}$ of the perfect obstruction theory and $e(-)$ is the equivariant Euler class. \\
\end{enumerate}
\end{theorem}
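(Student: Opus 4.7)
The plan is to reduce the statement to the Atiyah--Bott / Edidin--Graham localisation theorem in equivariant Chow theory, using the structure of the equivariant intrinsic normal cone. The starting point is that an equivariant perfect obstruction theory $\eT_{\calX}\to \eF_{\calX}$ produces a $\C^*$-equivariant virtual fundamental class $[\calX]^{\mathrm{vir}} \in A_*^{\C^*}(\calX)$, constructed by intersecting the intrinsic normal cone $\mathfrak{C}_{\calX}$ with the zero section of the vector bundle stack associated to $\eF_{\calX}$, all carried out equivariantly.

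The first step is to pull the obstruction theory back along each inclusion $\bm{\zeta}_n:\calX_n\hookrightarrow \calX$ and decompose it into weight spaces for the $\C^*$-action. Because the action is trivial on $\calX_n$, the complex $L\bm{\zeta}_n^*\eF_{\calX}$ splits canonically as
\[
L\bm{\zeta}_n^*\eF_{\calX} \;\cong\; (L\bm{\zeta}_n^*\eF_{\calX})^{\fix} \;\oplus\; (L\bm{\zeta}_n^*\eF_{\calX})^{\mov}.
\]
I would then verify that the fixed summand defines an honest perfect obstruction theory on $\calX_n$. The key input is that the cotangent complex of $\calX_n$ agrees with the fixed part of $L\bm{\zeta}_n^*\bbL_{\calX}$, because the $\C^*$-moving infinitesimal directions at a fixed point are precisely the directions normal to the fixed locus. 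This identification delivers both the virtual class $[\calX_n]^{\mathrm{vir}}$ and the virtual normal bundle $\NorBun_n$, realised in K-theory as the class of the moving summand.

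Next I would invoke the Atiyah--Bott localisation theorem for equivariant Chow groups, which tells us that the pushforward $\bigoplus_n (\bm{\zeta}_n)_*$ is an isomorphism after inverting $t$. Hence $[\calX]^{\mathrm{vir}}$ is determined by its contributions from the $\calX_n$, and the task reduces to computing those contributions. Using the compatibility of the intrinsic normal cone with the weight decomposition above -- concretely, the cone pulled back to $\calX_n$ sits inside the sum of the bundle stacks for the fixed and moving parts, and intersecting first with the zero section of the moving factor produces $e(\NorBun_n^{\mathrm{vir}})$ while the remainder is exactly $[\calX_n]^{\mathrm{vir}}$ -- one obtains the formula claimed.

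The technical heart of the argument, and the step I expect to be the main obstacle, is precisely this compatibility: that the intrinsic normal cone $\mathfrak{C}_{\calX_n}$ embeds $\C^*$-equivariantly in $\bm{\zeta}_n^*\mathfrak{C}_{\calX}$ in a manner compatible with the weight decomposition of $L\bm{\zeta}_n^*\eF_{\calX}$. Establishing this at the level of cones -- rather than merely at the level of the two-term presentation of the obstruction theory -- is the content of Graber--Pandharipande, and was later reproved by Chang--Kiem--Li using techniques that avoid the global resolution hypotheses of the original. Once this geometric compatibility is in place, the rest is a formal manipulation within the equivariant intersection theory of Edidin--Graham.
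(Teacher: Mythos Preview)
The paper does not give its own proof of this theorem: it is stated as a background result and attributed to \cite{GraberPand,ChangKiemLi}, with the subsequent remark noting only that Graber--Pandharipande's original argument required a $\C^*$-equivariant embedding into a smooth Deligne--Mumford stack and that Chang--Kiem--Li later removed this hypothesis. So there is nothing in the paper to compare your proposal against beyond that attribution.

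Your sketch is a faithful outline of the Graber--Pandharipande strategy (weight decomposition of the pulled-back obstruction theory on the fixed locus, identification of the fixed part as a perfect obstruction theory for $\calX_n$, appeal to Edidin--Graham localisation, and the cone-compatibility step as the technical core), and you correctly flag that Chang--Kiem--Li's contribution is to bypass the global resolution or embedding assumptions. That is exactly the content the paper is citing. If anything, you might sharpen the description of what Chang--Kiem--Li actually do: rather than merely ``reproving'' the cone compatibility, they use cosection localisation and a master-space/wall-crossing argument to deduce the formula directly, which is a genuinely different mechanism from the original cone argument and is what makes the embedding hypothesis unnecessary.
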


\begin{remark}
Theorem \ref{localisation_theorem_GraberPand} was originally due to \cite{GraberPand}, however it required the existence of a $\C^*$-equivariant embedding of $\calX$ into a smooth Deligne-Mumford stack. The requirement of this condition was removed in \cite{ChangKiemLi} and the localisation method was extended to include other, more general, concepts. \\
\end{remark}

\begin{re}[\textit{Perfect Obstruction Theory}]\label{dual_pot_re_vir_loc}
It will be shown in corollary \ref{POT_for_M1/r_main_corollary} that there is an equivariant perfect obstruction theory $\eT_{\M^{1/r}} \rightarrow \eF_{\M^{1/r}}$ for ${\M^{1/r}}$ which fits into the distinguished triangle 
\begin{align*}
\xymatrix@=3em{
 \eF_{\ForMapDRtoRSM} \ar[r] & \eF_{\M^{1/r}}  \ar[r] & L\ForMapDRtoRSM^*\eF_{\M^r}\ar[r] & \eF_{\ForMapDRtoRSM}[1]
 } 
\end{align*}
and is compatible with the distinguished triangle of equivariant tangent complexes arising from the morphism $\ForMapDRtoRSM: \M^{1/r}\rightarrow \M^r$. Here, $\eF_{\M^r}$ is the perfect obstruction theory for $\M^r$ to be given in \ref{equi_POT_for_M_re}.  $\eF_{\ForMapDRtoRSM}$ is the equivariant perfect relative obstruction theory for $\ForMapDRtoRSM$  first constructed in \cite{Leigh_Ram}; it is described further in \ref{equivariant_pot_nu_re} and \ref{equivariant_pot_nu_re_alt}. \\
\end{re}

\begin{re}[\textit{Strategy for Localisation Calculation}]
The strategy for computing the localisation formula will be to use the distinguished triangle from \ref{dual_pot_re_vir_loc} and calculate the fixed and moving parts on each term. Indeed, pulling back via $\fixedIncDR:\fixedDR\hookrightarrow \M^{1/r}$ and taking either the fixed or the moving part gives another distinguished triangle
\[
\xymatrix@C=1.25em{
 \big(L\fixedIncDR^* \eF_{\ForMapDRtoRSM}\big)^{\fixmov} \ar[r] &  \big(L\fixedIncDR^* \eF_{\M^{1/r}} \big)^{\fixmov}  \ar[r] & \big(L\fixedIncDR^*L\ForMapDRtoRSM^*\eF_{\M^r}\big)^{\fixmov}   \ar[r]& \big(L\fixedIncDR^* \eF_{\ForMapDRtoRSM}\big)^{\fixmov}  [1].
}
\]
This shows that we can compute $ (L\fixedIncDR^* \eE_{\M^{1/r}} )^{\fixmov}$ by computing $(L\fixedIncDR^*L\ForMapDRtoRSM^*\eE_{\M^r})^{\fixmov}$ and $(L\fixedIncDR^* \eE_{\ForMapDRtoRSM})^{\fixmov} $ individually. We will compute $(L\fixedIncDR^*L\ForMapDRtoRSM^*\eE_{\M^r})^{\fixmov}$ in section \ref{analysis_fix_mov_Mr_subsection} and $(L\fixedIncDR^* \eE_{\ForMapDRtoRSM})^{\fixmov} $  in section  \ref{analysis_fix_mov_nu_subsection}. \\
\end{re}

\subsection{Analysis of the Contributions From \texorpdfstring{$\M^r$}{Mr}} \label{analysis_fix_mov_Mr_subsection}

\begin{lemma}\label{fixed_moving_parts_pullback_of_RSM}
The fixed and moving parts of $L\fixedIncDR^*L\ForMapDRtoRSM^*\eF_{\M^r}$ have:
\begin{enumerate}
\item $\big(L\fixedIncDR^*L\ForMapDRtoRSM^*\eF_{\M^r})^{\fix} \cong F_0$ for some locally free sheaf $F_0$ of rank $3g-3+l$.
\item The Euler class of the moving part is the following:
\begin{enumerate}
\item If $g=0$ and $l=1$ then 
\begin{flalign*}
\hspace{5.5em}
e\Big(\big(L\fixedIncDR^* L\ForMapDRtoRSM^*{\eF_{\M^r}}\big)^{\mov}\Big) &= t^{\part_1-1} \frac{\part_1!}{\part_1^{\part_1-1}}  &
\end{flalign*}
\item If $g=0$ and $l=2$ then 
\begin{flalign*}
\hspace{5.5em}
e\Big(\big(L\fixedIncDR^* L\ForMapDRtoRSM^*{\eF_{\M^r}}\big)^{\mov}\Big) &= \frac{ t^{\part_1+\part_2}}{r}\, \frac{\part_1!}{\part_1^{\part_1+1}} \frac{\part_2!}{\part_2^{\part_2+1}} \left( \part_1 + \part_2 \right)&
\end{flalign*}
\item If $3g-3+l\geq0$ then 
\begin{flalign*}
\hspace{5.5em}
e\Big(\big(L\fixedIncDR^* L\ForMapDRtoRSM^*{\eF_{\M}}\big)^{\mov}\Big) &= 
\frac{r^{-l} \, t^{1-g+|\part|}}{c_{\frac{1}{t}}\big(- \bm{b}^*\bm{\rho}_* \omega_{\bm{\rho}}\big)} \prod_{i=1}^{l} \frac{\part_i!}{\part_i^{\part_i+1}}\left(1 - \frac{\part_i }{t}\psi_{i}\right)&
\end{flalign*}
where $c_{s}$ is the Chern polynomial, $\bm{b}: \fixedDR \longrightarrow \Mbar^{_{1/r, \mathtt{a}}}_{g,l(\part)}$ is the morphism from corollary \ref{degree_of_map_to_spin_curves} and $\bm{\rho}$ is the universal curve for $\Mbar^{_{1/r, \mathtt{a}}}_{g,l(\part)}$.\\
\end{enumerate}
\end{enumerate}
\end{lemma}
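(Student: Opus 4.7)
My plan is to reduce the computation to the classical Graber-Pandharipande analysis of the standard perfect obstruction theory $\eF_{\M}$ of $\M = \Mbar_g(\P^1,\part)$ along its simple fixed locus $\fixedSM$, and then to pull this analysis back through the chain $\fixedDR \xrightarrow{\ForMapDRtoRSMfixed} \fixedRSM \xrightarrow{\ForMapRSMtoSMfixed} \fixedSM$. Two inputs make the reduction work. First, the perfect obstruction theory $\eF_{\M^r}$ is obtained as the pullback $L\ForMapRSMtoSM^*\eF_{\M}$ along the degree-one flat forgetful morphism $\ForMapRSMtoSM: \M^r \to \M$ (as set up in the POT discussion referenced in \ref{dual_pot_re_vir_loc}). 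Second, lemma \ref{simple_fixed_locus_of_M1/r_Pullbacks} gives
\[
L\fixedIncDR^*L\ForMapDRtoRSM^*\eF_{\M^r} \;\cong\; L\ForMapDRtoRSMfixed^*\, L\ForMapRSMtoSMfixed^*\, L\fixedIncSM^*\eF_{\M},
\]
and since $\ForMapRSMtoSMfixed$ is degree one while $\ForMapDRtoRSMfixed$ is étale of degree $r^l$ by lemma \ref{fixed_locus_etale_morphism}, taking fixed/moving parts commutes with these pullbacks.

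For part (i), the standard GP analysis identifies $(L\fixedIncSM^*\eF_{\M})^{\fix}$ with the tangent complex of $\Mbar_{g,l}$ (via $\fixedSM \cong \Mbar_{g,l}\times\prod_i\calP_{\part_i}$), which is locally free of rank $3g-3+l$. Pullback through finite flat and étale morphisms preserves this, producing the sheaf $F_0$.

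For the moving part, I would invoke the classical GP decomposition of $(L\fixedIncSM^*\eF_{\M})^{\mov}$ into three ingredients: a vertex term giving a Hodge-class factor of the form $c_{\frac{1}{t}}(-\text{Hodge bundle})^{-1}$; an edge term on each $C_i \cong \P^1$ of degree $\part_i$ contributing the universal weight factor $\part_i!/\part_i^{\part_i+1}\cdot t^{\part_i+1}$; and a flag-node smoothing factor $(1-\part_i\psi_i/t)$ at each node. I would then translate these onto $\fixedDR$ using the tools already in the paper: lemma \ref{sheaves_on_fixed_locus_pullbacks}(i) identifies the pullback of the Hodge bundle with $\bm{b}^*\bm{\rho}_*\omega_{\bm{\rho}}$, and the cartesian normalisation diagrams (\ref{normalisation_along_flag_nodes_DR_diagrams_relating_to_SM}) identify the $\psi_i$-classes on $\fixedSM$ pulled back through the chain with the classes $\bm{b}^*\psi_i$ on $\fixedDR$. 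The total exponent $1-g+|\part|$ of $t$ then arises as the sum of the vertex contribution ($1-g$ from an Euler-characteristic count for $C_v$ mapping to the weight-$(-1)$ point $0\in\P^1$) and the edge contributions $\sum_i\part_i$.

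The main obstacle, and the only ingredient that is genuinely new relative to the classical GP formulas, is the prefactor $r^{-l}$ (and the corresponding $1/r$ in the $(0,2)$ case). This factor arises from the $r$-twisted structure at each of the $l$ flag nodes: the balanced $\mu_r$-action scales the one-dimensional smoothing deformation space by a factor of $1/r$ compared with the untwisted case. Equivalently, one must track how the degree-$r^l$ étale cover $\ForMapDRtoRSMfixed$ of lemma \ref{fixed_locus_etale_morphism} redistributes normal-bundle weights between $\fixedRSM$ and $\fixedDR$; verifying this redistribution is the principal bookkeeping step. The special cases $(g,l)=(0,1)$ and $(0,2)$ are handled by a direct version of the same analysis, using the explicit descriptions of the universal curve components supplied by the right-hand diagram of (\ref{normalisation_along_flag_nodes_DR_diagrams_relating_to_SM}) in place of the missing or degenerate $\Mbar_{g,l}$.
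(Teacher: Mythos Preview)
Your reduction contains a genuine gap at the first step. The perfect obstruction theory $\eF_{\M^r}$ is \emph{not} the pullback $L\ForMapRSMtoSM^*\eF_{\M}$. Diagram (\ref{Mbar_M_POT_comparision_diagram}) in \S\ref{equi_pot_section} shows instead a distinguished triangle
\[
\ForMapRSMtoSM^* \eE_{\M} \longrightarrow \eE_{\M^r} \longrightarrow \eL_{\ForMapRSMtoSM} \longrightarrow \ForMapRSMtoSM^* \eE_{\M}[1],
\]
and the cone $\eL_{\ForMapRSMtoSM}$ is nonzero because $\ForMapRSMtoSM$ is only flat of degree $1$, not \'etale: the morphism $\frakM^r\to\frakM$ has nontrivial cotangent complex supported along the nodal boundary. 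On the simple fixed locus this cone is concentrated precisely at the flag nodes, and it is exactly this term that produces the $r^{-l}$ factor. So once you assert the pullback identification you have already thrown away the source of the correction you are looking for.

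Your later paragraph half-recognises this --- you correctly locate the $r^{-l}$ in the $r$-twisted node-smoothing --- but the ``Equivalently'' is wrong: the \'etale morphism $\ForMapDRtoRSMfixed:\fixedDR\to\fixedRSM$ pulls back cotangent complexes isomorphically and therefore cannot redistribute weights or introduce any factor at all. The factor is entirely on the $\fixedRSM$ side, in the comparison between $\eF_{\M^r}$ and $\ForMapRSMtoSM^*\eF_{\M}$.

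The paper avoids this trap by never reducing to $\eF_{\M}$. It works directly with the triangle $\eF_{\ForMap}\to\eF_{\M^r}\to\ForMap^*\eT_{\frakM^r\times\calT}$ and redoes the Graber--Pandharipande clutching analysis with $r$-twisted curves. The key computation is in part (i) of the proof: the smoothing direction at the $i$th flag node, viewed $\mu_r$-equivariantly, carries $\C^*$-weight $(r\part_i)^{-1}$ rather than $\part_i^{-1}$, and its first Chern class is $\tfrac{1}{r}\psi_i$ rather than $\psi_i$; each flag node therefore contributes a factor $\tfrac{1}{r}(\psi_i-\tfrac{t}{\part_i})$ in place of the classical $(\psi_i-\tfrac{t}{\part_i})$. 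Your strategy can be repaired by keeping track of $\eL_{\ForMapRSMtoSM}$ as an explicit correction term, but at that point you are effectively carrying out the paper's computation in a different coordinate system.
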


\vspace{-0.4cm}
\begin{remark}
This calculation is essentially the same as in \cite[\S 4]{GraberPand} with changes arising from the relative condition described in \cite[\S 3.7]{GraVakil_Loc}. Additional differences in the case at hand arise from the deformation space of an ($r$-twisted) flag node  being an $r$-fold cover of the deformation space of the associated coarse flag node.\\
\end{remark}

\vspace{-0.4cm}
\begin{proof}
We prove the case of $3g-3+l\geq0$ (with the other cases requiring few modifications). Consider the universal morphism $\ForMap: \M^r \rightarrow \frakM^r \times \calT$ that forgets all data but the source and target families. This gives rise to the following distinguished triangle of perfect obstruction theories whose dual is discussed more in \ref{equi_POT_for_M_re}
\begin{align*}
\xymatrix{
   \eF_{\ForMap}  \ar[r]  & \eF_{\M^r}  \ar[r]  & {\ForMap}^* \eT_{\frakM^r \times \calT} \ar[r] &        \eF_{\ForMap}[1].
}
\end{align*}
Here $\eF_{\ForMap}$ is the pullback of the perfect relative obstruction theory originally given by Behrend in \cite{Behrend_GW} and extend to the relative stable maps case by \cite{GraVakil_Loc}. \\

In the simple fixed locus the target curves are not degenerated. Hence, there are none of the complications which arise from admissibility conditions discussed in \cite[\S2.8]{GraVakil_Loc} and we have an isomorphism
\[
{\ForMap}^* \eT_{\frakM^r \times \calT} 
\cong
R{\bm{\pi}_{\fixedDR}}_* R\sheafHom\big(\Omega_{\bm{\pi}_{\fixedDR}}(\mbox{$\sum_i$}\bm{q}_i), \O_{\calC_{\fixedDR}} \big). 
\]
For each flag node we have an étale gerbe $\bm{\gamma}_{\Theta_i}:\Theta \rightarrow\fixedDR$ which defines a sub-stack corresponding to the $r$-twisted node $\bm{\vartheta}_i: \Theta_i \hookrightarrow \calC_{\fixedDR}$ and corresponding sub-stacks of the normalised components 
\[
\check{\bm{\vartheta}}_i: \Theta_i \hookrightarrow \calC_v
\hspace{1cm}
\mbox{and}
\hspace{1cm}
\hat{\bm{\vartheta}}_i: \Theta_i \hookrightarrow \calC_i. 
\]
There is the following exact sequence (which is the $\C^*$-equivariant and $r$-twisted version of the clutching morphism from \cite[Thm. 3.5]{Knudsen_Mgn}):
\[
\xymatrix{
0 \ar[r] &\bigoplus_i {\bm{\vartheta}_i}_*\big({\check{\bm{\vartheta}}_i}^* \Omega^{\mathtt{e}}_{\bm{\pi}_v} \otimes {\hat{\bm{\vartheta}}_i}^*\Omega^{\mathtt{e}}_{\bm{\pi}_i}\big)  \ar[r] & \Omega^{\mathtt{e}}_{\bm{\pi}_{\fixedDR}}(\mbox{$\sum_i$}\bm{q}_i) \ar[r] &  \bm{n}_*\Omega^{\mathtt{e}}_{\widetilde{\bm{\pi}}}(\mbox{$\sum_i$}\hat{\bm{q}}_i)  \ar[r] & 0 .
}
\]
Note that we have also tensored by $\O_{\calC_{\fixedDR}}(\mbox{$\sum_i$}\bm{q}_i)$ and used the notation $\hat{\bm{q}}_i:\fixedDR \rightarrow \widetilde{\calC}$ for the morphisms corresponding to $\bm{q}_i:\fixedDR \rightarrow \calC_{\fixedDR}$. We can now analyse the fixed and moving parts of ${\ForMap}^* \eL_{\frakM^r \times \calT}$ by applying the functor $R{\bm{\pi}_{\fixedDR}}_*R\sheafHom(\,-\,,\O_{\calC_{\fixedDR}}) $ to this exact sequence and analysing the result. \\

\begin{itemize}[leftmargin=1.5em]
\item[(i)] \textit{Contribution from ${\bm{\vartheta}_i}_*\big({\check{\bm{\vartheta}}_i}^* \Omega^{\mathtt{e}}_{\bm{\pi}_v} \otimes {\hat{\bm{\vartheta}}_i}^*\Omega^{\mathtt{e}}_{\bm{\pi}_i}\big)$}: \\
Considering this term using Serre-Grothendieck duality for twisted curves gives
\begin{align*}
&R{\bm{\pi}_{\fixedDR}}_*R\sheafHom\Big(
 {\bm{\vartheta}_i}_*\big({\check{\bm{\vartheta}}_i}^* \Omega^{\mathtt{e}}_{\bm{\pi}_v} \otimes {\hat{\bm{\vartheta}}_i}^*\Omega^{\mathtt{e}}_{\bm{\pi}_i}\big)
,~\O_{\calC_{\fixedDR}}\Big)\\
&\cong
R{\bm{\pi}_{\fixedDR}}_*R\sheafHom\Big(
 {\bm{\vartheta}_i}_*\big({\check{\bm{\vartheta}}_i}^* \Omega^{\mathtt{e}}_{\bm{\pi}_v} \otimes {\hat{\bm{\vartheta}}_i}^*\Omega^{\mathtt{e}}_{\bm{\pi}_i}\big) \otimes \omega_{\bm{\pi}_{\fixedDR}}
,~\omega_{\bm{\pi}_{\fixedDR}}[1]\Big) [-1]\\
&\cong
\Big[
{\bm{\gamma}_{\Theta_i}}_* \big({\check{\bm{\vartheta}}_i}^* \Omega^{\mathtt{e}}_{\bm{\pi}_v} \otimes {\hat{\bm{\vartheta}}_i}^*\Omega^{\mathtt{e}}_{\bm{\pi}_i}\big)  \Big]^{\vee} [-1].
\end{align*}
Denote by $\calE:={\check{\bm{\vartheta}}_i}^* \Omega_{\bm{\pi}_v} \otimes {\hat{\bm{\vartheta}}_i}^*\Omega_{\bm{\pi}_i}$ the line bundle where the $\C^*$-equivariant structure is forgotten. We claim that ${\bm{\gamma}_{\Theta_i}}_*\calE $ is a line bundle on $\fixedDR$. To see this note that ${\bm{\gamma}_{\Theta_i}}$ is relative dimension $0$ so we only need to show that $h^0$ has constant rank $1$ for all fibres. For a geometric point $\bullet\hookrightarrow \fixedDR$ the local picture of $\calC_{\fixedDR}$ at the flag node is  given by $V:=\left[ U/ \mu_r\right]$, for $U = \Spec\,\C[z,w]/zw$ with action of $\mu_r$ given by $(z,w)\mapsto (\xi_r z, \xi_r^{-1} w)$. Then the local picture of $\calE |_\bullet$ has local generator $dz\otimes dw$ which is $\mu_r$ invariant so $h^0$ has constant rank $1$.\\

\noindent
Since ${\bm{\gamma}_{\Theta_i}}_*\calE$ is a line bundle on the $\C^*$ fixed locus we have that 
\[
{\bm{\gamma}_{\Theta_i}}_* \big({\check{\bm{\vartheta}}_i}^* \Omega^{\mathtt{e}}_{\bm{\pi}_v} \otimes {\hat{\bm{\vartheta}}_i}^*\Omega^{\mathtt{e}}_{\bm{\pi}_i}\big) 
\cong 
\calV_c \otimes {\bm{\gamma}_{\Theta_i}}_*\calE
\]
where $\calV_c$ is a trivial bundle on $\fixedDR$ with $\C^*$-weight $c$. We can calculate the weight $c$ by considering the geometric point $\bullet\hookrightarrow \fixedDR$. The equivariant bundle has local generator $dz\otimes dw$ giving that $\C^*$ acts with weight $(-{r \part_i})^{-1}$. For $(g,l)=(0,1)$ this term does not exist and for $(g,l)=(0,2)$ the weight is $(-{r \part_1+ r\part_2)^{-1}}$.
\\

\noindent
Hence this contribution has no fixed part and taking the Euler class gives
\[
e\Big(\big[
{\bm{\gamma}_{\Theta_i}}_* \big({\check{\bm{\vartheta}}_i}^* \Omega^{\mathtt{e}}_{\bm{\pi}_v} \otimes {\hat{\bm{\vartheta}}_i}^*\Omega^{\mathtt{e}}_{\bm{\pi}_i}\big)  \big]^{\vee} [-1] \Big)
=
e\big(\calV_c \otimes {\bm{\gamma}_{\Theta_i}}_*\calE\big)
= e\big({\bm{\gamma}_{\Theta_i}}_*\calE\big) -\tfrac{t}{r \part_i}.
\]
To compute $e\big({\bm{\gamma}_{\Theta_i}}_*\calE\big)$ we observe that the local generator of the line bundle  $\calE \cong {\check{\bm{\vartheta}}_i}^*\O_{\calC_v}(-\check{\bm{\vartheta}}_i) \otimes  {\hat{\bm{\vartheta}}_i}^* \O_{\calC_i}(-\hat{\bm{\vartheta}}_i)$ is $\mu_r$ invariant which gives $\calE\cong {\bm{\gamma}_{\Theta_i}}^{\hspace{-0.3em}*} {\bm{\gamma}_{\Theta_i}}_*\calE$. The projection formula  now gives
\[
({\bm{\gamma}_{\Theta_i}}_*\calE)^{\otimes r} 
\cong  {\bm{\gamma}_{\Theta_i}}_*(\calE^{\otimes r})
\cong {\check{\bm{\vartheta}}_i}^*\O_{\calC_v}(-r\check{\bm{\vartheta}}_i) \otimes  {\hat{\bm{\vartheta}}_i}^* \O_{\calC_i}(-r\hat{\bm{\vartheta}}_i)
\cong {\check{\bm{z}}_i}^* \Omega_{\overline{\bm{\pi}}_v} \otimes {\hat{\bm{z}}_i}^*\Omega_{\overline{\bm{\pi}}_i}
\]
where we have used the notation $\check{\bm{z}}_i: \fixedDR \hookrightarrow \overline{\calC}_v$ and 
$\hat{\bm{z}}_i: \fixedDR \hookrightarrow \overline{\calC}_i$ for the morphisms corresponding to $\bm{z}_i: \fixedDR \hookrightarrow \overline{\calC}_{\fixedDR}$.  Since ${\hat{\bm{z}}_i}^*\Omega_{\overline{\bm{\pi}}_i}$ is trivial as a bundle (as is the case in \cite[\S 4]{GraberPand}) we have $e\big({\bm{\gamma}_{\Theta_i}}_*\calE\big) = \frac{1}{r}\psi_i$. This vanishes for $(g,l)=(0,1),(0,2)$. \\

\item[(ii)]  \textit{Contribution from  $
\bm{n}_*\Omega^{\mathtt{e}}_{\widetilde{\bm{\pi}}}(\mbox{$\sum_i$}\hat{\bm{q}}_i)
\cong {\bm{\iota}_v}_* \Omega^{\mathtt{e}}_{\bm{\pi}_v} \oplus \bigoplus_i {\bm{\iota}_i}_* \Omega^{\mathtt{e}}_{\bm{\pi}_i}(\hat{\bm{q}}_i) 
$}:\\
Considering this term using Serre-Grothendieck duality for twisted curves gives
\begin{align*}
&R{\bm{\pi}_{\fixedDR}}_*R\sheafHom\Big(
\bm{n}_*\Omega^{\mathtt{e}}_{\widetilde{\bm{\pi}}}(\mbox{$\sum_i$}\hat{\bm{q}}_i)
,~\O_{\calC_{\fixedDR}}\Big) [1]\\
&\cong
R{\bm{\pi}_{\fixedDR}}_*R\sheafHom\Big(
\bm{n}_*\Omega^{\mathtt{e}}_{\widetilde{\bm{\pi}}}(\mbox{$\sum_i$}\hat{\bm{q}}_i) \otimes \omega_{\bm{\pi}_{\fixedDR}}
,~\omega_{\bm{\pi}_{\fixedDR}}[1]\Big) \\
&\cong
R{\bm{\pi}_{\fixedDR}}_*R\sheafHom\Big(
{\bm{\iota}_v}_* \big(\Omega^{\mathtt{e}}_{\bm{\pi}_v}\otimes \omega_{\bm{\pi}_v}(\mbox{$\sum_i$}\check{\bm{\vartheta}}_i)\big)\oplus \bigoplus\nolimits_i {\bm{\iota}_i}_* \big(\Omega^{\mathtt{e}}_{\bm{\pi}_i}(\hat{\bm{q}}_i) \otimes \omega_{\bm{\pi}_i}(\hat{\bm{\vartheta}}_i)\big) ,~\omega_{\bm{\pi}_{\fixedDR}}[1]\Big)\\
&\cong
\Big[
R{\bm{\pi}_{v}}_* \big(\Omega^{\mathtt{e}}_{\bm{\pi}_v}(\mbox{$\sum_i$}\check{\bm{\vartheta}}_i)\otimes \omega_{\bm{\pi}_v}\big)
\Big]^{\vee}
\oplus \bigoplus\nolimits_i 
\Big[ R{\bm{\pi}_{i}}_* \big(\Omega^{\mathtt{e}}_{\bm{\pi}_i}( \hat{\bm{\vartheta}}_i+ \hat{\bm{q}}_i) \otimes \omega_{\bm{\pi}_i}\big) 
\Big]^{\vee}
\end{align*}
Considering the direct summands individually gives:\\

\begin{itemize}[leftmargin=1.5em]
\item[(a)]
The term $\big[
R{\bm{\pi}_{v}}_* \big(\Omega^{\mathtt{e}}_{\bm{\pi}_v}(\mbox{$\sum_i$}\check{\bm{\vartheta}}_i)\otimes \omega_{\bm{\pi}_v}\big)
\big]^{\vee} [-1]
\cong
R{\bm{\pi}_{v}}_*R\sheafHom\big(
\Omega^{\mathtt{e}}_{\bm{\pi}_v}(\mbox{$\sum_i$}\check{\bm{\vartheta}}_i)
,\,\O_{\calC_{\fixedDR}}\big)
$ has $\C^*$-weight 0 and was shown in \cite[\S 2]{AbraJarvis} and \cite[\S 3]{Twisted_and_Admissible} to be locally free of rank $3g-3+l$. \\

\item[(b)]
The term $\big[ 
R{\bm{\pi}_{i}}_* \big(\Omega^{\mathtt{e}}_{\bm{\pi}_i}( \hat{\bm{\vartheta}}_i+ \hat{\bm{q}}_i) \otimes \omega_{\bm{\pi}_i}\big) 
\big]^{\vee}[-1] 
$ is the pullback of a corresponding bundle on $\M$. Specifically,  there is an isomorphism
\begin{align*}
\big[ 
&R{\bm{\pi}_{i}}_* \big(\Omega^{\mathtt{e}}_{\bm{\pi}_i}( \hat{\bm{\vartheta}}_i+ \hat{\bm{q}}_i) \otimes \omega_{\bm{\pi}_i}\big) 
\big]^{\vee}[-1] \\
& \cong 
\big[ R{\overline{\bm{\pi}}_{i}}_* \big( \Omega^{\mathtt{e}}_{\overline{\bm{\pi}}_i}( \hat{\bm{z}}_i+ \hat{\bm{q}}_i) \otimes \omega_{\overline{\bm{\pi}}_i}\big) 
\big]^{\vee}[-1]\\
& \cong 
(\ForMapRSMtoSMfixed\circ\ForMapDRtoRSMfixed)^* \big[ R{{\bm{\pi}}_{\fixedSM,i}}_* \big( \Omega^{\mathtt{e}}_{{\bm{\pi}}_{\fixedSM,i}}( \hat{\bm{z}}_{\fixedSM,i}+ \hat{\bm{q}}_{\fixedSM,i}) \otimes \omega_{{\bm{\pi}}_{\fixedSM,i}}\big) 
\big]^{\vee}[-1]
\end{align*}
where we have label the morphisms and sheaves related to $\fixedSM$ using the by adding $\fixedSM$ to the subscripts. This is the pullback of the bundle studied in \cite[\S 4]{GraberPand} and \cite[\S 3.7]{GraVakil_Loc}. We can use that analysis without change to show that this term cancels with a term in part (iii) below. \\
\end{itemize}

\item[(iii)]\textit{Contribution from $\eF_{\ForMap}$}:\\
There is an isomorphism $\eF_{\ForMap} 
\cong (\ForMapRSMtoSMfixed\circ\ForMapDRtoRSMfixed)^* R{\overline{\bm{\pi}}_{\fixedSM}}_* \bm{f}_{\fixedSM}^* (\omega_{\P^1}^{\mathrm{log}})^\vee.$ Hence we can use the analysis for this term from \cite[\S 4]{GraberPand} with the relative changes from \cite[\S 3.7]{GraVakil_Loc}. The desired result now follows once we employee the isomorphism of Hodge bundles $(\ForMapRSMtoSMfixed\circ\ForMapDRtoRSMfixed)^* {\bm{\pi}_{\fixedSM,v}}_*\omega_{\bm{\pi}_{\fixedSM,v}} \cong \bm{b}^*\bm{\rho}_* \omega_{\bm{\rho}}$ discussed in \ref{sheaves_on_fixed_locus_pullbacks} part \ref{dualizing_sheaves_iso}. 
\end{itemize}
\end{proof}

\subsection{Analysis of the Contributions Relative to \texorpdfstring{$\ForMapDRtoRSM$}{nu}} \label{analysis_fix_mov_nu_subsection}

\begin{re}[\textit{Description of Perfect Relative Obstruction Theory for $\ForMapDRtoRSM$}]
Consider the following commutative diagram:
\begin{align}
\begin{array}{c}
\xymatrix@R=1.5em{
\fixedDR \ar@{=}[d] & \calC_{\fixedDR} \ar[l]_{\bm{\pi}_{\fixedDR}} \ar@{=}[d] \ar[r]^{\mathfrak{f}~~} & \mathsf{Tot}_{\calC_{\fixedDR}} \L \ar[r]^{~~\bm{\beta}_{\fixedDR}}  \ar[d]^{\bm{\tau}_{\fixedDR}} & \calC_{\fixedDR} \ar@{=}[d]\\
\fixedDR & \calC_{\fixedDR} \ar[l]_{\bm{\pi}_{\fixedDR}}  \ar[r]^{\mathfrak{f}'~~} & \mathsf{Tot}_{\calC_{\fixedDR}} \L^r\ar[r]^{~~\bm{\alpha}_{\fixedDR}}  & \calC_{\fixedDR} \\
}
\end{array} \label{nu_pot_restriction_diagram}
\end{align}
In this diagram $\bm{\alpha}_{\fixedDR}$, $\bm{\beta}_{\fixedDR}$ and $\bm{\tau}_{\fixedDR}$ are the morphisms from \ref{rth_power_maps_on_M} pulled back to ${\fixedDR}$ from  $\M^{1/r}$. The morphisms $\mathfrak{f}$ and $\mathfrak{f}'$ closed immersions defined by the universal sections $\bm{\sigma}$ and $\bm{\sigma}^r$ respectively. Using this notation we have (as described in \ref{equivariant_pot_nu_re_alt}) that the equivariant perfect relative obstruction theory for $\ForMapDRtoRSM$ restricted to $\fixedDR$ is
\[
L\fixedIncDR^* \eF_{\nu} = R{\bm{\pi}_{\fixedDR}}_* L\mathfrak{f}^*  \eT_{\bm{\tau}_{\fixedDR}}. 
\]\\
\end{re}

\vspace{-0.5cm}
\begin{re} \label{fixed_moving_nu_morphism_notation}
Consider the morphism $\bm{n}:\widetilde{\calC} \rightarrow  \calC_{\fixedDR}$ (discussed in \ref{normalisation_along_flag_nodes_DR}) normalising $\calC_{\fixedDR}$ at the distinguished nodes. This gives the distinguished triangle in $\mathrm{D}_b^{\mathtt{e}}( \calC_{\fixedDR})$ 

\[
\xymatrix{
 \O_{ \calC_{\fixedDR}} \ar[r] & \bm{n}_* \O_{\widetilde{\calC}}  \ar[r] & \bigoplus\limits_{i} (\bm{\vartheta}_i )_* \O_{\Theta_i}  \ar[r]&  \O_{ \calC_{\fixedDR}} [1]
}
\]
where all $\C^*$-linearisations are trivial. Now, taking the (derived) tensor product by $L\mathfrak{f}^*  \eT_{\bm{\tau}_{\fixedDR}}$ and pushing forward via $R{\bm{\pi}_{\fixedDR}}_*$  we obtain the following distinguished triangle 

\[
\xymatrix{
L\fixedIncDR^*\eF_{\ForMapDRtoRSM} \ar[r] & \calF_{v} \oplus \bigoplus\limits_{i} \calF_{i} \ar[r] & \bigoplus\limits_{i} \calF_{\Theta_i} \ar[r]&  L\fixedIncDR^*\eF_{\ForMapDRtoRSM}[1]
}
\]
where, using the notation from \ref{etale_gerbes_of_simple_locus_DR} and \ref{normalisation_along_flag_nodes_DR}, we have
\begin{enumerate}
\item $\calF_{v} = R{\bm{\pi}_{v}}_*  L \bm{\iota}_{v}^* L\mathfrak{f}^*  \eT_{\bm{\tau}_{\fixedDR}}$, 
\item $\calF_{i} = R{\bm{\pi}_{i}}_* L \bm{\iota}_{v}^* L\mathfrak{f}^*  \eT_{\bm{\tau}_{\fixedDR}}$, 
\item $\calF_{\Theta_i} = R(\bm{\gamma}_{\Theta_i})_*  L \bm{\vartheta}_i^* L\mathfrak{f}^*  \eT_{\bm{\tau}_{\fixedDR}}$. \\
\end{enumerate}
\end{re}

\vspace{0.1cm}
\begin{lemma} \label{distinguished_triangle_for_vertex_contributions}
Let $3g-3+l\geq 0$ and let $\mathrm{log}$ refer to the un-twisted markings on $\calC_v$ corresponding to the flag nodes. There is a distinguished triangle in $\mathrm{D}_b^{\mathtt{e}}(\fixedDR)$:
\[
\xymatrix{
\calF_{v} \ar[r] & \calV_{\frac{1}{r}}  \otimes R{\bm{\pi}_v}_*\L_{v} \ar[r] & \calV_{1} \otimes  R{\bm{\pi}_v}_*\omega_{\bm{\pi}_v}^{\mathrm{log}} \ar[r]&  \calF_{v}[1]
}
\]
where $R{\bm{\pi}_v}_*\L_{v}$ and $R{\bm{\pi}_v}_*\omega_{\bm{\pi}_v}^{\mathrm{log}}$ have trivial $\C^*$-linearisations and $\calV_c$ is a trivial bundle on $\fixedDR$ with weight $c$. 
\end{lemma}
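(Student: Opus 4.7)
The proof rests on restricting the perfect relative obstruction theory to the vertex component $\calC_v$, exploiting the vanishing of the universal section $\bm{\sigma}$ there, and then computing the tangent complex of the $r$-th power map at its ramification locus.

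First, restricting the diagram \ref{nu_pot_restriction_diagram} along $\bm{\iota}_v$ and combining with flat base change for $\bm{\pi}_v=\bm{\pi}_{\fixedDR}\circ\bm{\iota}_v$, I would identify
\[
\calF_v \;\cong\; R{\bm{\pi}_v}_*\bigl(L\bm{\iota}_v^* L\mathfrak{f}^* \eT_{\bm{\tau}_{\fixedDR}}\bigr).
\]
Because $\bm{f}$ is constant on $\calC_v$, one has $\bm{\delta}|_{\calC_v}=0$, and the argument in the proof of Lemma \ref{fixed_locus_etale_morphism} shows $\bm{\sigma}|_{\calC_v}=0$ as well. Thus the restricted immersion $\bm{\iota}_v^* \mathfrak{f}$ is the zero section $\mathfrak{z}:\calC_v\hookrightarrow \Tot_{\calC_v}\L_v$, and $\bm{\tau}_{\fixedDR}$ restricts to the $r$-th power map $\bm{\tau}_v: \Tot_{\calC_v}\L_v\rightarrow \Tot_{\calC_v}\L_v^r$ between total spaces over $\calC_v$.

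Second, I would compute $L\mathfrak{z}^* \eT_{\bm{\tau}_v}$ by means of the tangent-complex triangle for the composition $\calC_v\overset{\mathfrak{z}}{\hookrightarrow}\Tot_{\calC_v}\L_v\overset{\bm{\tau}_v}{\rightarrow}\Tot_{\calC_v}\L_v^r$:
\[
\eT_{\mathfrak{z}} \longrightarrow \eT_{\bm{\tau}_v \circ \mathfrak{z}} \longrightarrow L\mathfrak{z}^* \eT_{\bm{\tau}_v} \longrightarrow \eT_{\mathfrak{z}}[1].
\]
Both $\mathfrak{z}$ and $\bm{\tau}_v\circ\mathfrak{z}$ are zero sections of line bundles on $\calC_v$, so their tangent complexes are $\eT_{\mathfrak{z}}\cong \L_v[-1]$ and $\eT_{\bm{\tau}_v \circ \mathfrak{z}}\cong \L_v^r[-1]$. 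The connecting map between them arises from the derivative of $x\mapsto x^r$ at the origin, which vanishes for $r\geq 2$. Hence $L\mathfrak{z}^* \eT_{\bm{\tau}_v}$ fits into the distinguished triangle
\[
L\mathfrak{z}^* \eT_{\bm{\tau}_v} \longrightarrow \L_v \longrightarrow \L_v^r \longrightarrow L\mathfrak{z}^* \eT_{\bm{\tau}_v}[1].
\]

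Third, applying $R{\bm{\pi}_v}_*$ and substituting the isomorphism $\L_v^r\cong \calR_v\cong \omega_{\bm{\pi}_v}^{\log}$ from \ref{universal_bundles_on_fixed_locus} produces the distinguished triangle in the statement, modulo tracking the $\C^*$-linearizations. Since $\calC_v$ lies in the fixed locus and the $\C^*$-action is trivial on $\calC_v$, each of $\L_v$ and $\omega_{\bm{\pi}_v}^{\log}$ splits as a trivial bundle tensored with a $\C^*$-character; the identity $\calR_v=\omega_{\bm{\pi}_v}^{\log}\otimes \bm{f}_v^*(\omega_{\P^1}^{\log})^\vee$ together with the known $\C^*$-weight of $(\omega_{\P^1}^{\log})^\vee$ at $0\in\P^1$ forces these characters to be $\calV_{\frac{1}{r}}$ on the $\L_v$-factor and $\calV_{1}$ on the $\omega_{\bm{\pi}_v}^{\log}$-factor. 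The principal obstacle is this equivariant bookkeeping; the geometric content reduces to the elementary fact that the $r$-th power map has vanishing derivative at the origin for $r\geq 2$.
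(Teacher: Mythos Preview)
Your proposal is correct and reaches the same distinguished triangle, but by a genuinely different route than the paper. The paper pulls back the tangent triangle for the composition $\bm{\beta}_v=\bm{\alpha}_v\circ\bm{\tau}_v$ along the section $\mathfrak{f}_v$, obtaining
\[
L\mathfrak{f}_v^*\eT_{\bm{\tau}_v}\longrightarrow \mathfrak{f}_v^*\eT_{\bm{\beta}_v}\longrightarrow \mathfrak{f}_v^*\bm{\tau}_v^*\eT_{\bm{\alpha}_v}\longrightarrow L\mathfrak{f}_v^*\eT_{\bm{\tau}_v}[1],
\]
and then identifies $\mathfrak{f}_v^*\eT_{\bm{\beta}_v}\cong\L_v$ and $\mathfrak{f}_v^*\bm{\tau}_v^*\eT_{\bm{\alpha}_v}\cong\L_v^r$ as relative tangents of total-space projections. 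This argument is indifferent to which section $\mathfrak{f}_v$ one has, which is why the same template is reused verbatim for the edge contributions $\calF_i$ in Lemma~\ref{edge_contributions_lemma}, where the section is emphatically \emph{not} zero. Your argument instead exploits the special feature $\bm{\sigma}|_{\calC_v}=0$, replaces $\mathfrak{f}_v$ by the zero section $\mathfrak{z}$, and uses the tangent triangle for the composition $\bm{\tau}_v\circ\mathfrak{z}$ together with $\eT_{\mathfrak{z}}\cong\L_v[-1]$. This is clean and conceptually pleasant for the vertex, but note that the observation about the derivative of $x\mapsto x^r$ vanishing is not needed: the triangle you write down is simply the rotation of the composition triangle and exists regardless of whether the first map is zero.

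On the weights: the paper computes them by an explicit local calculation at a flag node, tracking how $\C^*$ acts on the local generator $\Psi$ of $\L$ via $z\mapsto c^{-1/(r\part_i)}z$. Your higher-level argument, reading the weight off from $(\omega_{\P^1}^{\log})^\vee$ at $0$, is correct in spirit but would need more care to pin down the sign: the linearisation on $\calR$ is induced by the isomorphism $\Phi_a$ of \ref{C*-action_on_M_def}, not directly by the geometric action on $\P^1$, and getting the sign right this way is more delicate than the paper's direct computation.
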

\begin{proof}
Consider the following diagram which is a restriction of the diagram (\ref{nu_pot_restriction_diagram}) to $\calC_v$ (with appropriately chosen labels):
\[
\xymatrix@R=1.5em{
\fixedDR \ar@{=}[d] & \calC_{v} \ar[l]_{\bm{\pi}_{v}} \ar@{=}[d] \ar[r]^{\mathfrak{f}_v~~} & \mathsf{Tot}_{\calC_{v}} \L_v \ar[r]^{~~\bm{\beta}_v}  \ar[d]^{\bm{\tau}_v} & \calC_{v} \ar@{=}[d]\\
\fixedDR & \calC_{v} \ar[l]_{\bm{\pi}_{v}}  \ar[r]^{\mathfrak{f}'_v~~} & \mathsf{Tot}_{\calC_{v}} \L_v^r\ar[r]^{~~\bm{\alpha}_v}  & \calC_{v} \\
}
\]
With this notation we have an isomorphism
\[
\calF_{v} = R{\bm{\pi}_{v}}_*  L \bm{\iota}_{v}^* L\mathfrak{f}^*  \eT_{\bm{\tau}_{\fixedDR}}
\cong
R{\bm{\pi}_{v}}_*  L\mathfrak{f}_v^*  \eT_{\bm{\tau}_v}
\]
and the distinguished triangle of equivariant tangent complexes arising from $\bm{\tau}_{v}$
\[
\xymatrix{
L\mathfrak{f}_v^*  \eT_{\bm{\tau}_v}  \ar[r] & \mathfrak{f}_v^*  \eT_{\bm{\beta}_v} \ar[r] & \mathfrak{f}_v^* \bm{\tau}_v^* \eT_{\bm{\alpha}_v} \ar[r]&  L\mathfrak{f}_v^*  \eT_{\bm{\tau}_v}[1].
}
\]
Since $\calC_v$ is a $\C^*$-fixed locus, any equivariant line bundle will be of the form $\calE \otimes\calU_{c}$ where $\calE$ is a line bundle with trivial $\C^*$-linearisation and $\calU_{c}$ is a trivial bundle on $\calC_v$ with weight $c$. Moreover, since $\fixedDR$ is also $\C^*$-fixed we have $\calU_{c}\cong \bm{\pi}_v^* \calV_{c}$ where $\calV_c$ is a trivial bundle on $\fixedDR$ with weight $c$.\\

Forgetting the $\C^*$-linearisations,  $\mathfrak{f}_v^*  \eT_{\bm{\beta}_v} $ and $\mathfrak{f}_v^* \bm{\tau}_v^* \eT_{\bm{\alpha}_v}$ correspond to the bundles $\L_v$ and $\omega_{\bm{\pi}_v}^{\mathrm{log}}$ respectively. Hence, we need to calculate the weight of the $\C^*$-linearisation. \\

We can calculate the weights by taking a geometric point in $\fixedDR$ given by
\[
\zeta = \Big(~
C
,~ f: C \rightarrow \P^1
,~ L
,~ e:L^r\overset{\sim}{\rightarrow} R_f
,~ \sigma:\O_C\rightarrow L 
~\Big)
\]
and considering the flag node of $C$ where $C_v$ meets a $C_i$. The weights must be the same as the weighs of the corresponding sheaves at the orbifold point of $C_i$. This has local picture given by $V:=\left[ U/ \mu_r\right]$, for $U = \Spec\, \C[z]$ where the action of $\mu_r$ is given by  $z\mapsto \xi_r z$. Let $\Psi$ be the local generator of $(\L_\zeta)|_{V}$ with isomorphisms
\begin{align}
\Psi^r  \, \C[z]
\cong
\frac{ dz }{z^{r \part_i+1} } \, \C[z]
\cong 
R_f\big|_V. 
\end{align}
Locally $\C^*$ acts on $V$ via $z\mapsto c^{-\frac{1}{r\part_i}} z$, so the $\C^*$-action on the local generator is $c\cdot \Psi = c^{\frac{1}{r}} \Psi$. Locally, the total space morphism $\bm{\beta}_v$ is given by
\[
A \rightarrow A[\Psi]
\]
Hence, we have the weight corresponding to $\bm{\beta}_v$ is $\frac{1}{r}$. Similarly we can calculate the weight for $\bm{\alpha}_v$ to be 1. 
\end{proof}
\mbox{}

\begin{corollary}\label{vertex_contributions_corollary}
In the case $3g-3+l\geq 0$, the fixed part of $\calF_{v}$ is zero and the moving part has equivariant Euler class
\[
e\big(\calF_v^{\,\mov}\big) = 
\frac{
c_{\frac{r}{t}}\big(\bm{b}^* R\bm{\rho}_* \L \big)
\left(\frac{t}{r}\right)^{m  -g+1-l -  \sum_i\left\lfloor\part_i/r\right\rfloor+\sum_{i: r|\part_i} 1}
}
{
c_{\frac{1}{t}}\big( \bm{b}^* \bm{\rho}_* \omega_{\bm{\rho}} \big) ~ t^{g-1+l}
}
\]
where $c_{s}$ is the Chern polynomial, $\bm{b}: \fixedDR \rightarrow \Mbar^{_{1/r, \mathtt{a}}}_{g,l}$ is from corollary \ref{degree_of_map_to_spin_curves} and where $\bm{\rho}$ and $\L$ are the the universal curve  and $r$-th root bundle for $\Mbar^{_{1/r, \mathtt{a}}}_{g,l}$.
\end{corollary}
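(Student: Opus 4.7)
The plan is to extract $e(\calF_v^{\mov})$ directly from the distinguished triangle of Lemma \ref{distinguished_triangle_for_vertex_contributions} by rewriting each of the two middle complexes as a $\bm{b}^*$-pullback from $\Mbar^{\frac{1}{r},\mathtt{a}}_{g,l}$ modulo trivial $\O_{\fixedDR}$-summands. First I would observe that $R{\bm{\pi}_v}_*\L_v$ and $R{\bm{\pi}_v}_*\omega_{\bm{\pi}_v}^{\log}$ carry trivial $\C^*$-linearisations, so that after tensoring by $\calV_{1/r}$ and $\calV_1$ they become purely moving. Applying the (exact) fixed-part functor to the triangle then forces $\calF_v^{\fix}=0$, and multiplicativity of Euler classes along the triangle yields
\[
e(\calF_v^{\mov})\;=\;\frac{e\bigl(\calV_{1/r}\otimes R{\bm{\pi}_v}_*\L_v\bigr)}{e\bigl(\calV_1\otimes R{\bm{\pi}_v}_*\omega_{\bm{\pi}_v}^{\log}\bigr)}.
\]
Throughout I would rely on the general identity $e(\calV_c\otimes E)=(ct)^{\chi(E)}c_{1/(ct)}(E)$ for a complex $E$ with trivial linearisation and fibrewise Euler characteristic $\chi(E)$.

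For the numerator, Lemma \ref{sheaves_on_fixed_locus_pullbacks}\ref{dist_triangle_forgetting_log_L} splits the class in $K$-theory as $[R{\bm{\pi}_v}_*\L_v]=[\bm{b}^*R\bm{\rho}_*\L]+\sum_{i:\,r|\part_i}[\O_{\fixedDR}]$, so the trivial summands are absorbed by $c_{r/t}$. Riemann--Roch applied to $\L$ on a genus $g$ curve, together with the relation $r\deg\L=2g-2-\sum_i a_i$ and the constraint $rm=2g-2+l+|\part|$, gives $\deg\L=m-l-\sum_i\lfloor\part_i/r\rfloor$ and hence virtual rank $m-g+1-l-\sum_i\lfloor\part_i/r\rfloor+\sum_{i:\,r|\part_i}1$, which is precisely the exponent of $t/r$ in the stated formula. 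For the denominator the analogous manoeuvre is more delicate, because Lemma \ref{sheaves_on_fixed_locus_pullbacks}\ref{dualizing_sheaves_iso} supplies only the Hodge bundle $\bm{b}^*\bm{\rho}_*\omega_{\bm{\rho}}$ rather than $R{\bm{\pi}_v}_*\omega_{\bm{\pi}_v}^{\log}$. Here I would use the local orbifold identity $du=rz^{r-1}dz$ at an $r$-twisted flag node to deduce $\omega_{\bm{\pi}_v}\cong\bm{\gamma}_v^*\omega_{\overline{\bm{\pi}}_v}\bigl((r-1)\mbox{$\sum_i$}\bm{z}_i\bigr)$, and then, tensoring by $\O(\sum_i\bm{z}_i)$ and using $r\bm{z}_i=\bm{\gamma}_v^*\overline{\bm{z}}_i$, obtain the cleaner identification $\omega_{\bm{\pi}_v}^{\log}\cong\bm{\gamma}_v^*\omega_{\overline{\bm{\pi}}_v}^{\log}$. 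The projection formula with $R\bm{\gamma}_{v*}\O_{\calC_v}=\O_{\overline{\calC}_v}$ then reduces everything to the coarse curve, and the short exact sequence $0\to\omega_{\overline{\bm{\pi}}_v}\to\omega_{\overline{\bm{\pi}}_v}^{\log}\to\bigoplus_i(\overline{\bm{z}}_i)_*\O\to 0$ combined with the Grothendieck-duality identification $R^1\overline{\bm{\pi}}_{v*}\omega_{\overline{\bm{\pi}}_v}\cong\O_{\fixedDR}$ and Lemma \ref{sheaves_on_fixed_locus_pullbacks}\ref{dualizing_sheaves_iso} yield $[R{\bm{\pi}_v}_*\omega_{\bm{\pi}_v}^{\log}]=[\bm{b}^*\bm{\rho}_*\omega_{\bm{\rho}}]+(l-1)[\O_{\fixedDR}]$, of virtual rank $g-1+l$, producing the denominator $t^{g-1+l}c_{1/t}(\bm{b}^*\bm{\rho}_*\omega_{\bm{\rho}})$.

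The main obstacle is this stacky-versus-coarse comparison of logarithmic dualizing sheaves on $\calC_v$: once $\omega_{\bm{\pi}_v}^{\log}$ has been traded for $\bm{\gamma}_v^*\omega_{\overline{\bm{\pi}}_v}^{\log}$ and the logarithmic Hodge calculation on the coarse is in hand, every remaining step is routine bookkeeping of $K$-theory classes, virtual ranks, and the $\calV_c$-twist formula for equivariant Euler classes, and assembling numerator and denominator produces the stated expression.
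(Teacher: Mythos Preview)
Your proof is correct and follows essentially the same route as the paper: apply Euler classes to the triangle of Lemma~\ref{distinguished_triangle_for_vertex_contributions}, use Lemma~\ref{sheaves_on_fixed_locus_pullbacks}\ref{dist_triangle_forgetting_log_L} and Riemann--Roch for the numerator, and reduce the denominator to the Hodge bundle.

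The one place you do extra work is the denominator. You say the manoeuvre is ``more delicate'' because Lemma~\ref{sheaves_on_fixed_locus_pullbacks}\ref{dualizing_sheaves_iso} only gives the Hodge bundle, and you then carry out a stacky-versus-coarse comparison of $\omega_{\bm{\pi}_v}^{\log}$ by hand. But you have overlooked part~\ref{dist_triangle_forgetting_log_L^r} of that same lemma, which already supplies the distinguished triangle
\[
\bm{b}^* R\bm{\rho}_* \omega_{\bm{\rho}} \longrightarrow R{\bm{\pi}_v}_* \L_v^r \longrightarrow \bigoplus_i \O_{\fixedDR} \longrightarrow \bm{b}^* R\bm{\rho}_* \omega_{\bm{\rho}}[1],
\]
together with the identification $R{\bm{\pi}_v}_* \L_v^r \cong R{\bm{\pi}_v}_*\omega_{\bm{\pi}_v}^{\log}$ (via $\L_v^r\cong\calR_v$ from \ref{universal_bundles_on_fixed_locus}). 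The paper simply cites this and the well-known splitting $R\bm{\rho}_*\omega_{\bm{\rho}}\cong[\bm{\rho}_*\omega_{\bm{\rho}}\to\O]$. Your argument is not wrong---it essentially reproves that part of the lemma---but it is an unnecessary detour.
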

\begin{proof}
We take the equivariant Euler class of the distinguished triangle from lemma \ref{distinguished_triangle_for_vertex_contributions}. We begin with the term $\calV_{\frac{1}{r}}  \otimes R{\bm{\pi}_v}_*\L_{v}$. As discussed in lemma \ref{sheaves_on_fixed_locus_pullbacks} part \ref{dist_triangle_forgetting_log_L}  there is a distinguished triangle
\[
\xymatrix{
\bm{b}^* R\bm{\rho}_* \L \ar[r] & R{\bm{\pi}_v}_* \L_v \ar[r] & \bigoplus\limits_{i:\, r|\part_i} \O_{\fixedDR} \ar[r] & \bm{b}^* R\bm{\rho}_* \L[1].
}
\]
This gives that  $e(\calV_{\frac{1}{r}}  \otimes R{\bm{\pi}_v}_*\L_{v}) = \left(\frac{t}{r}\right)^{\sum_{i:\, r|\part_i} 1} e(\calV_{\frac{1}{r}}  \otimes \bm{b}^* R\bm{\rho}_* \L)$. Now, using standard methods (for example \cite[Prop. 5]{Behrend_GW})  one can show $R\bm{\rho}_* \L$ is quasi-isomorphic to a two term sequence of bundles $
R\bm{\rho}_* \L
~ \cong~
 \big[\,E_{0} \longrightarrow E_1\,\big]$ in $D_b\big(\Mbar^{_{1/r, \mathtt{a}}}_{g,l}\big)$.\\

Pulling back and tensoring by $\calV_{\frac{1}{r}}$ gives a quasi-isomorphim in $D_b^{\mathtt{e}}(\fixedDR)$
\[
\calV_{\frac{1}{r}}  \otimes \bm{b}^*R\bm{\rho}_* \L
~ \cong ~
 \big[\, \calV_{\frac{1}{r}} \otimes F_{0} \longrightarrow \calV_{\frac{1}{r}} \otimes  F_1\,\big]. 
\]
where we have defined $F_n:=\bm{b}^*E_n$. Then taking the equivariant Euler class gives
\[
e\big(\calV_{\frac{1}{r}}  \otimes \bm{b}^*R\bm{\rho}_* \L \big)
= 
\frac{\sum_{k=0}^{\mathrm{rk}\,F_0} \left(\frac{t}{r}\right)^{\mathrm{rk}\,F_0-k} c_k(F_0)}{\sum_{k=0}^{\mathrm{rk}\,F_1} \left(\frac{t}{r}\right)^{\mathrm{rk}\,F_1- k} c_k(F_1)}
= 
\left(\frac{t}{r}\right)^{\mathrm{rk}\,F_0-\mathrm{rk}\,F_1}   c_{\frac{r}{t}}(\bm{b}^*R\bm{\rho}_* \L ).
\]
The degree of $\L$ is $m - l -  \sum_i\left\lfloor\part_i/r\right\rfloor$, hence we can calculate $\mathrm{rk}\,F_0-\mathrm{rk}\,F_1$ using Riemann-Roch for twisted curves. \\

The contribution from $\calV_{\frac{1}{r}}  \otimes R{\bm{\pi}_v}_*\L_{v}^r $ is calculated in a similar manner. We have a distinguished triangle discussed in \ref{sheaves_on_fixed_locus_pullbacks} part \ref{dist_triangle_forgetting_log_L^r} 
\[
\xymatrix{
\bm{b}^* R\bm{\rho}_* \omega_{\bm{\rho}} \ar[r] & R{\bm{\pi}_v}_* \L_v^r \ar[r] & \bigoplus\limits_{i} \O_{\fixedDR} \ar[r] & \bm{b}^* R\bm{\rho}_* \L[1].
}
\]
However this case is simpler and it is well known that there is an isomorphism  $
 R\bm{\rho}_* \omega_{\bm{\rho}} 
 ~ \cong ~
 \big[\bm{\rho}_* \omega_{\bm{\rho}} \rightarrow \O_{\Mbar^{_{1/r, \mathtt{a}}}_{g,l}}\big]$
 where each term in the complex is a vector bundle. The calculation now proceeds in the same way as the previous case. 
\end{proof}
\mbox{}

\vspace{0.5cm}
\begin{lemma}\label{distinguished_triangle_for_flag_contributions}
The contributions from flag nodes have no fixed part.
\begin{enumerate}
\item In the case $g=0$ and $l=2$, there is a single flag node and 
\[
e( \calF_{\Theta}) = 
\left\{
\begin{array}{cl}
\tfrac{1}{r}, & \mbox{if $r\mid\part_1$; } \vspace{0.1cm}\\
\tfrac{1}{t}, & \mbox{if $r\nmid\part_1$.}
\end{array}
\right.
\]
\item In the case $3g-3+l\geq0$, the moving part has equivariant Euler class
\[
e\big(\mbox{$\bigoplus_{i}$} \calF_{\Theta_i}\big)
= 
t^{-l} \left(\tfrac{t}{r}\right)^{\sum_{i:\, r|\part_i} 1}.
\]
\end{enumerate}
\end{lemma}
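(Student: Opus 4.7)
The plan is to adapt the distinguished-triangle analysis from Lemma \ref{distinguished_triangle_for_vertex_contributions} to the flag nodes, using Lemma \ref{direct_image_of_line_bundles_at_flag_nodes} to evaluate the relevant pushforwards. The overall strategy is identical to the vertex case, but with $R{\bm{\pi}_v}_*$ replaced by $R(\bm{\gamma}_{\Theta_i})_*\bm{\vartheta}_i^*$.

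First I will apply $R(\bm{\gamma}_{\Theta_i})_*L\bm{\vartheta}_i^*$ to the distinguished triangle coming from the factorisation $\bm{\alpha}_{\fixedDR}\circ\bm{\tau}_{\fixedDR}=\bm{\beta}_{\fixedDR}$ (the same triangle used in the proof of Lemma \ref{distinguished_triangle_for_vertex_contributions}). This produces a distinguished triangle
\[
\calF_{\Theta_i}\to \calV_{\frac{1}{r}}\otimes R(\bm{\gamma}_{\Theta_i})_*\bm{\vartheta}_i^*\L \to \calV_1\otimes R(\bm{\gamma}_{\Theta_i})_*\bm{\vartheta}_i^*\L^r \to \calF_{\Theta_i}[1],
\]
in which the $\C^*$-weights $\tfrac{1}{r}$ and $1$ are exactly those computed in Lemma \ref{distinguished_triangle_for_vertex_contributions}---the local weight calculation there is carried out at the orbifold point of $C_i$, which coincides with the flag node. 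Next I plug in Lemma \ref{direct_image_of_line_bundles_at_flag_nodes}: the right-hand term is always isomorphic to $\O_{\fixedDR}$ with trivial total Chern class, while the middle term is either a line bundle of trivial total Chern class (if $r\mid\part_i$) or vanishes (if $r\nmid\part_i$). Since both non-zero contributions carry non-trivial $\C^*$-weights, the fixed part of $\calF_{\Theta_i}$ automatically vanishes in both cases, which is the first half of the claim.

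The Euler-class bookkeeping is then straightforward: when $r\mid\part_i$ one obtains $e(\calF_{\Theta_i})=(t/r)/t=1/r$, and when $r\nmid\part_i$ the triangle collapses to $\calF_{\Theta_i}\cong \calV_1\otimes\O_{\fixedDR}[-1]$, yielding $e(\calF_{\Theta_i})=1/t$. Multiplying these contributions over the $l$ flag nodes gives the formula in part (ii), and part (i) (the unique flag node in the $(g,l)=(0,2)$ case) follows directly from the same case analysis.

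The main subtlety I expect is ensuring the $\C^*$-weights imported from Lemma \ref{distinguished_triangle_for_vertex_contributions} are valid at the flag node: those weights were obtained from a local calculation at the orbifold point of $C_i$, which coincides with the local picture at the flag node itself, so reusing them is legitimate---but one should verify this explicitly, particularly in the $(g,l)=(0,2)$ case, where the vertex component $C_v$ is absent and the flag node joins two cover components $C_1$ and $C_2$ directly.
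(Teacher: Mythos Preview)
Your proposal is correct and follows essentially the same approach as the paper: restrict the distinguished triangle coming from $\bm{\tau}$ to the flag node via $R(\bm{\gamma}_{\Theta_i})_*L\bm{\vartheta}_i^*$, import the weights $\tfrac{1}{r}$ and $1$ from the local computation at the orbifold point of $C_i$ (exactly as in Lemma~\ref{distinguished_triangle_for_vertex_contributions}), and then invoke Lemma~\ref{direct_image_of_line_bundles_at_flag_nodes} to identify the two terms and read off the Euler classes. The paper likewise notes that the $(g,l)=(0,2)$ case differs only in that the single flag node joins two non-constant components, so your caveat about verifying the weights there is well placed but ultimately harmless.
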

\begin{proof}
This proof is similar to the proof of \ref{distinguished_triangle_for_vertex_contributions}. Consider the case where we have $3g-3+l\geq0$. There is the following diagram which is a restriction of (\ref{nu_pot_restriction_diagram}) to $\Theta_i$ (with appropriately chosen labels):
\[
\xymatrix@R=1.5em{
\fixedDR \ar@{=}[d] & \Theta_i \ar[l]_{\bm{\gamma}_{ \Theta_i}} \ar@{=}[d] \ar[r]^{\mathfrak{f}_{ \Theta_i}~~} & \mathsf{Tot}_{\Theta_i} \bm{\vartheta}_i^* \L \ar[r]^{~~\bm{\beta}_{ \Theta_i}}  \ar[d]^{\bm{\tau}_{ \Theta_i}} & \Theta_i \ar@{=}[d]\\
\fixedDR & \Theta_i \ar[l]_{\bm{\gamma}_{ \Theta_i}}  \ar[r]^{\mathfrak{f}'_{ \Theta_i}~~} & \mathsf{Tot}_{\Theta_i} \bm{\vartheta}_i^* \L^r \ar[r]^{~~\bm{\alpha}_{ \Theta_i}}  & \Theta_i \\
}
\]
With this notation we have an isomorphism
\[
\calF_{\Theta_i} = R(\bm{\gamma}_{\Theta_i})_*  L \bm{\vartheta}_i^* L\mathfrak{f}^*  \eT_{\bm{\tau}_{\fixedDR}}
\cong
R(\bm{\gamma}_{\Theta_i})_*  L\mathfrak{f}_{\Theta_i}^*  \eT_{\bm{\tau}_{\Theta_i}}
\]
and the canonical distinguished triangle on $\Theta_i$ arising from $\bm{\tau}_{\Theta_i}$
\[
\xymatrix{
L\mathfrak{f}_{\Theta_i}^*  \eT_{\bm{\tau}_{\Theta_i}}  \ar[r] & \mathfrak{f}_{\Theta_i}^*  \eT_{\bm{\beta}_{\Theta_i}} \ar[r] & \mathfrak{f}_{\Theta_i}^* \bm{\tau}_{\Theta_i}^* \eT_{\bm{\alpha}_{\Theta_i}} \ar[r]&  L\mathfrak{f}_{\Theta_i}^*  \eT_{\bm{\tau}_{\Theta_i}}[1].
}
\]\\
$\Theta_i$ is a $\C^*$-fixed locus (as in the proof of \ref{distinguished_triangle_for_vertex_contributions}) and we have equivariant isomorphisms
\begin{align*}
R(\bm{\gamma}_{\Theta_i})_* \mathfrak{f}_{\Theta_i}^*  \eT_{\bm{\beta}_{\Theta_i}}
 &\cong
\calV_{c_1}   \otimes R(\bm{\gamma}_{\Theta_i})_* \bm{\vartheta}_i^* \L  
\hspace{0.5cm}\mbox{and}\\
R(\bm{\gamma}_{\Theta_i})_*\mathfrak{f}_{\Theta_i}^* \bm{\tau}_{\Theta_i}^* \eT_{\bm{\alpha}_{\Theta_i}}
&\cong
\calV_{c_2}   \otimes R(\bm{\gamma}_{\Theta_i})_* \bm{\vartheta}_i^* \L^r
\end{align*}
for some weights $c_1,c_2\in\mathbb{Q}$. The weights are calculated to be $\tfrac{1}{r}$ and $1$ respectively by using the same method as the proof of \ref{distinguished_triangle_for_vertex_contributions} . The result follows from lemma \ref{direct_image_of_line_bundles_at_flag_nodes} which shows the Chern polynomials are $c_s\Big(R(\bm{\gamma}_{\Theta_i})_* \bm{\vartheta}_i^* \L^r \Big) = 1$ and 
\[
c_s\Big( R(\bm{\gamma}_{\Theta_i})_* \bm{\vartheta}_i^* \L \Big)  = \left\{\begin{array}{ll}
1 & \mbox{if $r|\part_i$;} \\
0 & \mbox{otherwise.} \\
\end{array}\right.\]\\
In the case $(g,l)=(0,2)$, the proof is the same except there is only one node connecting two components where the stable map is non-constant.
\end{proof}
\mbox{}

\begin{lemma}\label{edge_contributions_lemma}
The fixed part of $\calF_i$ is zero. In the cases $3g-3+l\geq 0$ and $(g,l)=(0,2)$ the moving part has equivariant Euler class
\[
e\big(\calF_i^{\,\mov}\big) = 
\left( \frac{\left\lfloor \frac{\part_i}{r} \right\rfloor!}{\part_i^{\left\lfloor \frac{\part_i}{r} \right\rfloor}} t^{\left\lfloor \frac{\part_i}{r} \right\rfloor} \right)
\left(
 \frac{\part_i!}{\part_i^{\part_i}} t^{\part_i} 
\right)^{-1}.
\] 
For $(g,l)=(0,1)$ the moving part contribution is the above formula multiplied by $t$. 
\end{lemma}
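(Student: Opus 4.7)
The plan is to mimic the approach of Lemmas \ref{distinguished_triangle_for_vertex_contributions} and \ref{distinguished_triangle_for_flag_contributions}. First I would restrict diagram (\ref{nu_pot_restriction_diagram}) to $\calC_i$, which gives morphisms $\mathfrak{f}_i$, $\bm{\beta}_i$, $\bm{\alpha}_i$, $\bm{\tau}_i$ together with an isomorphism
\[
\calF_i \cong R{\bm{\pi}_i}_* L\mathfrak{f}_i^*\eT_{\bm{\tau}_i}.
\]
The canonical distinguished triangle attached to the factorisation $\bm{\beta}_i = \bm{\alpha}_i \circ \bm{\tau}_i$ then produces
\[
L\mathfrak{f}_i^*\eT_{\bm{\tau}_i} \longrightarrow \mathfrak{f}_i^*\eT_{\bm{\beta}_i} \longrightarrow \mathfrak{f}_i^*\bm{\tau}_i^*\eT_{\bm{\alpha}_i} \longrightarrow L\mathfrak{f}_i^*\eT_{\bm{\tau}_i}[1],
\]
which I would identify equivariantly with the morphism $\L_i \to \L_i^r$ given by multiplication by $r\bm{\sigma}^{r-1}$ (the differential of the $r$-th power). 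Pushing forward then realises $\calF_i$ as the shifted cone of the induced map $R{\bm{\pi}_i}_*\L_i \to R{\bm{\pi}_i}_*\L_i^r$.

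Next I would invoke Lemma \ref{edge_bundles_trivial_without_linearisations} to conclude that both pushforwards are non-equivariantly trivial, so each decomposes equivariantly as a direct sum of trivial bundles $\calV_w$ on $\fixedDR$ indexed by the $\C^*$-weights $w$ of the global sections. Determining these weights reduces to a computation of $H^0$ on a single fibre $C_i \cong \P^1[r]$. Using the local coordinates and the $\mu_r$-invariant generator $z^{\langle \part_i/r\rangle}\Psi$ at the flag orbifold point (as in the proof of Lemma \ref{distinguished_triangle_for_vertex_contributions}), together with the analogous log computation at the marked point $q_i$, I would identify ${\bm{\gamma}_i}_*\L_i$ and $\L_i^r$ as equivariant line bundles on $\overline{C}_i \cong \P^1$ of degrees $\lfloor\part_i/r\rfloor$ and $\part_i$ with weights $\lfloor\part_i/r\rfloor/\part_i$ and $1$ at the flag end and weight $0$ at $\bar q_i$. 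A standard equivariant Riemann--Roch argument then yields the weight sets $\{k/\part_i : k = 0,\ldots,\lfloor\part_i/r\rfloor\}$ and $\{k/\part_i : k = 0,\ldots,\part_i\}$ for the sections (with $H^1$ vanishing).

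In each pushforward the weight-zero summand is one-dimensional, corresponding to the invariant sections $\bm{\sigma}|_{C_i}$ and $\delta|_{C_i}$; since $r\bm{\sigma}^{r-1}$ sends the former to a non-zero multiple of the latter, these fixed parts cancel in $\calF_i$, giving $\calF_i^{\,\fix} = 0$. Taking the ratio of the equivariant Euler classes of the moving parts then produces the claimed formula in the cases $3g-3+l \geq 0$ and $(g,l)=(0,2)$. For $(g,l)=(0,1)$ the source curve is irreducible and has no flag node, so the normalisation/log computation of \ref{normalisation_flag_nodes} is replaced by a direct one: $R_f$ has degree $\part_1 - 1$ rather than $\part_1$, which suppresses the weight-$1$ summand of $H^0(R_f)$ and thereby introduces precisely one extra factor of $t$ in the numerator of $e(\calF_1^{\,\mov})$, matching the stated correction.

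The main technical obstacle is the equivariant bookkeeping at the flag orbifold point: tracking the $\mu_r$-characters of $\Psi$ and of the coordinate $z$, picking out the correct $\mu_r$-invariant local generator of ${\bm{\gamma}_i}_*\L_i$, and verifying that its $\C^*$-weight agrees with the value obtained independently by propagating along $\overline{C}_i \cong \P^1$ from the $q_i$ end, where the log structure forces the weight to vanish. This is the same style of bookkeeping already carried out in the vertex lemma, and I anticipate no genuinely new conceptual difficulty beyond the care required for the $(0,1)$ exceptional case.
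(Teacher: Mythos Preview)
Your proposal is correct and follows essentially the same route as the paper: restrict the defining diagram to $\calC_i$, use the distinguished triangle for $\bm{\tau}_i$ to express $\calF_i$ as the cone of $R{\bm{\pi}_i}_*\L_i \to R{\bm{\pi}_i}_*\L_i^r$, invoke Lemma~\ref{edge_bundles_trivial_without_linearisations} to reduce to a weight computation on a single fibre $\P^1[r]$, and read off the weights $\{k/\part_i\}$ from the sections of $\O_{\P^1}(\lfloor\part_i/r\rfloor)$ and $\O_{\P^1}(\part_i)$ after pushing down along $\gamma:\P^1[r]\to\P^1$. The only stylistic difference is that the paper formalises the passage from $\P^1[r]$ to $\P^1$ via an explicit comparison $R\mathfrak{p}_* s_{\mathfrak{b}'}^*\eT_{\mathfrak{b}'}\overset{\sim}{\to} R\mathfrak{p}_* s_{\mathfrak{b}}^*\eT_{\mathfrak{b}}$ using the natural map $\gamma^*\gamma_*L\to L$, whereas you identify the equivariant structure of ${\bm{\gamma}_i}_*\L_i$ directly by tracking the $\mu_r$-invariant generator; both give the same answer, and your explicit description of the connecting map as multiplication by $r\bm{\sigma}^{r-1}$ makes the fixed-part cancellation slightly more transparent than the paper's ``nowhere-zero on weight zero'' argument.
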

\begin{proof}
Consider the following diagram which contains a restriction of the diagram (\ref{nu_pot_restriction_diagram}) to $\calC_i$ (with appropriately chosen labels):
\[
\xymatrix@R=1.5em{
\fixedDR \ar@{=}[d] & \calC_i \ar[l]_{\bm{\pi}_i} \ar@{=}[d] \ar[r]^{\mathfrak{f}_i~~} & \mathsf{Tot}_{\calC_i} \L_i \ar[r]^{~~\bm{\beta}_i}  \ar[d]^{\bm{\tau}_i} & \calC_i \ar@{=}[d]\\
\fixedDR & \calC_i \ar[l]_{\bm{\pi}_i}  \ar[r]^{\mathfrak{f}'_i~~} & \mathsf{Tot}_{\calC_i} \L_i^r\ar[r]^{~~\bm{\alpha}_i}  & \calC_i \\
}
\]
With this notation we have an isomorphism
\[
\calF_i = R{\bm{\pi}_i}_*  L \bm{\iota}_i^* L\mathfrak{f}^*  \eT_{\bm{\tau}_{\fixedDR}}
\cong
R{\bm{\pi}_i}_*  L\mathfrak{f}_i^*  \eT_{\bm{\tau}_i}
\]
and the canonical distinguished triangle on $\fixedDR$ arising from $\bm{\tau}_i$:
\[
\xymatrix{
\calF_{i} \ar[r] & R{\bm{\pi}_{i}}_* \mathfrak{f}_i^*  \eT_{\bm{\beta}_i} \ar[r] & R{\bm{\pi}_{i}}_* {\mathfrak{f}'_i}^* \eT_{\bm{\alpha}_i} \ar[r]&  \calF_{i}[1].
}
\]
In lemma \ref{edge_bundles_trivial_without_linearisations} it is shown that, after forgetting the $\C^*$-linearisation, $R{\bm{\pi}_{i}}_* \mathfrak{f}_i^*  \eT_{\bm{\beta}_i}$ and $R{\bm{\pi}_{i}}_* {\mathfrak{f}'_i}^* \eT_{\bm{\alpha}_i}$ are trivial bundles. Hence we may calculate their weights by considering a geometric point in $\fixedDR$. \\

\vspace{0.0cm}
Without loss of generality we consider a geometric point $\bullet \hookrightarrow \fixedDR$ with $\L|_{\bullet} = L$. Then we can form the following diagrams 
\[
\begin{array}{c}
\xymatrix@=1.8em{
\bullet \ar[d] & \P^1[r] \ar[l]_{\mathfrak{p}} \ar[d] \ar[r]^{s_{\mathfrak{b}}~~} & \mathsf{Tot}_{\P^1[r]} L\ar[r]^{~~\mathfrak{b}}  \ar[d] &  \P^1[r]  \ar[d]\\ 
\fixedDR & \calC_i \ar[l]_{\bm{\pi}_i}  \ar[r]^{\mathfrak{f}_i~~} & \mathsf{Tot}_{\calC_i} \L_i\ar[r]^{~~\bm{\beta}_i}  & \calC_i \\
}
\end{array}
\hspace{0.3cm}
\begin{array}{c}
\xymatrix@=1.8em{
\bullet \ar[d] & \P^1[r] \ar[l]_{\mathfrak{p}} \ar[d] \ar[r]^{s_{\mathfrak{a}}~~} & \mathsf{Tot}_{\P^1[r]} L^r\ar[r]^{~~\mathfrak{a}}  \ar[d] &  \P^1[r]  \ar[d]\\ 
\fixedDR & \calC_i \ar[l]_{\bm{\pi}_i}  \ar[r]^{\mathfrak{f}'_i~~} & \mathsf{Tot}_{\calC_i} \L_i^r\ar[r]^{~~\bm{\alpha}_i}  & \calC_i \\
}
\end{array}
\]
by taking the left and right squares of each diagram to be cartesian. (Here $\P^1[r]$ is the standard notation for $\P^1$ with a $r$-orbifold point at $0$.) These give isomorphisms 
\[
\big(R{\bm{\pi}_{i}}_* \mathfrak{f}_i^*  \eT_{\bm{\beta}_i} \big)\big|_{\bullet}
\cong 
R{\mathfrak{p}}_* s_{\mathfrak{b}}^*  \eT_{\mathfrak{b}} 
\hspace{0.5cm}
\mbox{and}
\hspace{0.5cm}
\big(R{\bm{\pi}_{i}}_* \mathfrak{f}_i^* \bm{\tau}_i^* \eT_{\bm{\alpha}_i}\big)\big|_{\bullet}
\cong 
R{\mathfrak{p}}_* s_{\mathfrak{a}}^*  \eT_{\mathfrak{a}} .
\]\\

\vspace{-0.4cm}
We also have a natural morphism $\gamma^* \gamma_*L\rightarrow L$ which induces a morphism on the total spaces $\mathfrak{h}: \mathsf{Tot}_{\P^1[r]} L\rightarrow  \mathsf{Tot}_{\P^1[r]} \gamma^* \gamma_*L$ as well as inducing an isomorphism on global sections $H^0(\P^1[r], \gamma^* \gamma_*L) \cong H^0(\P^1[r],L)$. Hence there exist a commuting diagram of the following form:
\[
\begin{array}{c}
\xymatrix@R=1.8em{
\mathsf{Tot}_{\P^1[r]} L  \ar[d]_{\mathfrak{b}} &  \mathsf{Tot}_{\P^1[r]} \gamma^* \gamma_*L \ar[d]_{\mathfrak{b}'} \ar[l]^{\mathfrak{h}\hspace{1em}} \\
\P^1[r] \ar@{=}[r]\ar@/_/[u]_{s_{\mathfrak{b}}}  & \P^1[r]\ar@/_/[u]_{s_{\mathfrak{b}'}} 
}
\end{array}
\]\\

\vspace{-0.2cm}
The morphism $\mathfrak{h}$, of the total spaces, induces the following distinguished triangle
\[
\xymatrix{
R{\mathfrak{p}}_* s_{\mathfrak{b}'}^*\eT_{\mathfrak{h}} \ar[r]& R{\mathfrak{p}}_* s_{\mathfrak{b}'}^*  \eT_{\mathfrak{b}'} \ar[r] & R{\mathfrak{p}}_* s_{\mathfrak{b}}^*  \eT_{\mathfrak{b}} \ar[r]& R{\mathfrak{p}}_* s_{\mathfrak{b}'}^*\eT_{\mathfrak{h}} [1].
}
\]
After forgetting the $\C^*$-linearisation the morphism $R{\mathfrak{p}}_* s_{\mathfrak{b}'}^*  \eT_{\mathfrak{b}'}\rightarrow R{\mathfrak{p}}_* s_{\mathfrak{b}}^*  \eT_{\mathfrak{b}}$ becomes the isomorphism $H^0(\P^1[r], \gamma^* \gamma_*L) \cong H^0(\P^1[r],L)$ which shows that $R{\mathfrak{p}}_* s_{\mathfrak{b}'}^*\mathbb{T}_{\mathfrak{h}}\cong 0$. So we  must have $R{\mathfrak{p}}_* s_{\mathfrak{b}'}^*\eT_{\mathfrak{h}}\cong 0$, showing $R{\mathfrak{p}}_* s_{\mathfrak{b}'}^*  \eT_{\mathfrak{b}'}\rightarrow R{\mathfrak{p}}_* s_{\mathfrak{b}}^*  \eT_{\mathfrak{b}}$ is an isomorphism. 

In the cases $3g-3+l\geq 0$ and $(g,l)=(0,2)$, consider the morphism $\gamma: \P^1[r]\rightarrow \P^1$ forgetting the stack structure. We can form the following diagrams where the top squares are cartesian: 
\[
\begin{array}{c}
\xymatrix@R=1.8em{
\mathsf{Tot}_{\P^1[r]} \gamma^* \gamma_*L \ar[r] \ar[d]_{\mathfrak{b}'} &  \mathsf{Tot}_{\P^1} \O_{\P^1}\hspace{-0.25em}\left( \left\lfloor \frac{\part_i}{r}\right\rfloor \right)\ar[d]_{\overline{\mathfrak{b}'}} \\
\P^1[r] \ar[r]^{\gamma}\ar[d]_{\mathfrak{p}}\ar@/_/[u]_{s_{\mathfrak{b}'}}  & \P^1\ar[d]_{\overline{\mathfrak{p}}} \ar@/_/[u]_{\overline{s}_{\mathfrak{b}'}} \\
\bullet \ar@{=}[r]  &  \bullet
}
\end{array}
\hspace{1cm}
\begin{array}{c}
\xymatrix@R=1.8em{
\mathsf{Tot}_{\P^1[r]} L^r \ar[r] \ar[d]_{\mathfrak{a}} &  \mathsf{Tot}_{\P^1} \O_{\P^1} \hspace{-0.25em}\left( \part_i \right) \ar[d]_{\overline{\mathfrak{a}}} \\
\P^1[r] \ar[r]^{\gamma}\ar[d]_{\mathfrak{p}}\ar@/_/[u]_{s_{\mathfrak{a}}}  & \P^1\ar[d]_{\overline{\mathfrak{p}}} \ar@/_/[u]_{\overline{s}_{\mathfrak{a}}} \\
\bullet \ar@{=}[r]  &  \bullet
}
\end{array}
\]\\

\vspace{-0.2cm}
Since $\gamma$ is flat we have isomorphisms $\eT_{\mathfrak{b}'} \cong \gamma^* \eT_{\overline{\mathfrak{b}'}}$ and $\eT_{\mathfrak{a}} \cong \gamma^* \eT_{\overline{\mathfrak{a}}}$. Furthermore these induce isomorphisms 
\[
\big(R{\bm{\pi}_{i}}_* \mathfrak{f}_i^*  \eT_{\bm{\beta}_i} \big)\big|_{\bullet}
\cong 
R{\overline{\mathfrak{p}}}_* \overline{s}_{\mathfrak{b}'}^*  \eT_{\overline{\mathfrak{b}'}} 
\hspace{0.5cm}
\mbox{and}
\hspace{0.5cm}
\big(R{\bm{\pi}_{i}}_* \mathfrak{f}_i^* \bm{\tau}_i^* \eT_{\bm{\alpha}_i}\big)\big|_{\bullet}
\cong 
R{\overline{\mathfrak{p}}}_* \overline{s}_{\mathfrak{a}}^*  \eT_{\overline{\mathfrak{a}}} .
\]
Here, $\C^*$ acts on $\P^1$ by $c\cdot [x_0 :x_1] = [x_0 : c^{1/\part_i} x_1]$. Hence we have:
\begin{enumerate}
\item $R{\overline{\mathfrak{p}}}_* \overline{s}_{\mathfrak{b}'}^*  \eT_{\overline{\mathfrak{b}'}} $ is the sum of trivial bundles with weights $ 0, \frac{t}{\part_i},2\frac{t}{\part_i}, \ldots,  \left\lfloor \frac{\part_i}{r} \right\rfloor\frac{ t}{\part_i}$.
\item $R{\overline{\mathfrak{p}}}_* \overline{s}_{\mathfrak{a}}^*  \eT_{\overline{\mathfrak{a}}}$ is the sum of trivial bundles with weights $ 0, \frac{t}{\part_i},2\frac{t}{\part_i}, \ldots,  \part_i\frac{ t}{\part_i}$.
\end{enumerate}
Taking equivariant Euler classes of the parts with non-zero weights completes the proof for the moving parts. The case $(g,l)=(0,1)$ is the same except the bundles $\O_{\P^1}\hspace{-0.25em}\left( \left\lfloor \frac{\part_i}{r}\right\rfloor \right)$ and $\O_{\P^1}\hspace{-0.25em}\left( \part_i \right)$ and replaced with $\O_{\P^1}\hspace{-0.25em}\left( \left\lfloor \frac{\part_i-1}{r}\right\rfloor \right)$ and $\O_{\P^1}\hspace{-0.25em}\left( \part_i-1 \right)$. \\

Lastly, the restriction of the morphism $R{\bm{\pi}_{i}}_* \mathfrak{f}_i^*  \eT_{\bm{\beta}_i} \longrightarrow R{\bm{\pi}_{i}}_* \mathfrak{f}_i^* \bm{\tau}_i^* \eT_{\bm{\alpha}_i}$ to the weight zero part is a nowhere-zero morphism between trivial bundles. Hence it is an isomorphism. The desired result now follows by taking equivariant Euler classes.
\end{proof}
\mbox{}

\subsection{Results from the Analysis of the Fixed and Moving Parts}

\begin{corollary} \label{final_localisation_formula_result}
The restriction $L\fixedIncDR^*\F_{\M^{1/r}}$ has fixed part  
\[
\big(L\fixedIncDR^*\F_{\M^{1/r}}\big)^{\fix} \cong \eT_{\fixedDR} \cong (\Omega_{\fixedDR})^\vee
\] 
which is locally free. Moreover, setting $\mathrm{N}^{\mathrm{vir}}_{\fixedDR}:= (L\fixedIncDR^*\F_{\M^{1/r}})^{\mov}$ we have
\begin{enumerate}
\item If $g=0$ and $l=1$ then
\begin{flalign*}
\hspace{4.5em}
\frac{1}{e\big(\mathrm{N}^{\mathrm{vir}}_{\fixedDR}\big) } 
&= 
r^{-1}
\left(\tfrac{t}{r}\right)^{-m}  
\left(  \frac{\part_1 \left(\frac{\part_1}{r}\right)^{\left\lfloor \frac{\part_1}{r} \right\rfloor}}{\left\lfloor \frac{\part_1}{r} \right\rfloor!}\right) 
\frac{1}{\part_1^2}.&
\end{flalign*}
\item If $g=0$ and $l=2$ then 
\begin{flalign*}
\hspace{4.5em}
\frac{1}{e\big(\mathrm{N}^{\mathrm{vir}}_{\fixedDR}\big) } 
&= 
\left(\tfrac{t}{r}\right)^{ -m}  
\left( \prod_{i =1}^2 \frac{\part_i \left(\frac{\part_i}{r}\right)^{\left\lfloor \frac{\part_i}{r} \right\rfloor}}{\left\lfloor \frac{\part_i}{r} \right\rfloor!}\right) 
\frac{1}{\left( \part_1 + \part_2 \right)  }. &
\end{flalign*}
\item If $3g-3+l\geq 0$ then
\begin{flalign*}
\hspace{4.5em}
\frac{1}{e\big(\mathrm{N}^{\mathrm{vir}}_{\fixedDR}\big) } 
&= 
r^{l+2g-2}
\left(\tfrac{t}{r}\right)^{3g-3+l -m}  
\left( \prod_{i =1}^l \frac{\part_i \left(\frac{\part_i}{r}\right)^{\left\lfloor \frac{\part_i}{r} \right\rfloor}}{\left\lfloor \frac{\part_i}{r} \right\rfloor!}\right) 
\frac{c_{\frac{r}{t}}(-\bm{b}^*R\bm{\rho}_* \L )}{\prod_{i=1}^{l}(1 - \frac{\part_i}{r}\psi_i)}.&
\end{flalign*}
where $c_{s}$ is the Chern polynomial, $\bm{b}: \fixedDR \rightarrow \Mbar^{_{1/r, \mathtt{a}}}_{g,l}$ is from corollary \ref{degree_of_map_to_spin_curves}, while $\bm{\rho}$ and $\L$ are the the universal curve  and $r$-th root bundle for $\Mbar^{_{1/r, \mathtt{a}}}_{g,l}$.
\end{enumerate}
\end{corollary}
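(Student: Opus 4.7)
The plan is to combine the previous lemmas via the two distinguished triangles that have been set up: first the one from \ref{dual_pot_re_vir_loc} which expresses $L\fixedIncDR^* \eF_{\M^{1/r}}$ as an extension of $L\fixedIncDR^*L\ForMapDRtoRSM^*\eF_{\M^r}$ by $L\fixedIncDR^* \eF_{\ForMapDRtoRSM}$, and second the one from \ref{fixed_moving_nu_morphism_notation} which decomposes $L\fixedIncDR^* \eF_{\ForMapDRtoRSM}$ into the vertex, edge and flag contributions $\calF_v$, $\calF_i$ and $\calF_{\Theta_i}$. Since passing to the fixed (respectively moving) part of an equivariant perfect complex is exact, each of these triangles induces two parallel triangles, one for the fixed and one for the moving part. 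The Euler class is multiplicative on such triangles, so once I have the factors coming from the individual lemmas, only a bookkeeping exercise remains.

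For the fixed part I would first observe that lemmas \ref{distinguished_triangle_for_vertex_contributions}, \ref{distinguished_triangle_for_flag_contributions} and \ref{edge_contributions_lemma} (together with corollary \ref{vertex_contributions_corollary}) show that $\calF_v$, $\calF_i$ and $\calF_{\Theta_i}$ have vanishing fixed parts. The distinguished triangle from \ref{fixed_moving_nu_morphism_notation} therefore forces $(L\fixedIncDR^* \eF_{\ForMapDRtoRSM})^{\fix}=0$, and the triangle from \ref{dual_pot_re_vir_loc} then gives
\[
(L\fixedIncDR^* \eF_{\M^{1/r}})^{\fix} \;\cong\; (L\fixedIncDR^* L\ForMapDRtoRSM^* \eF_{\M^r})^{\fix} \;\cong\; F_0,
\]
with $F_0$ locally free of rank $3g-3+l$ by lemma \ref{fixed_moving_parts_pullback_of_RSM}(i). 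Since the fixed part of the pulled-back obstruction theory on the fixed locus is itself a perfect obstruction theory for $\fixedDR$ (see theorem \ref{localisation_theorem_GraberPand}(iii)), and since it is concentrated in a single degree, $\fixedDR$ is smooth of dimension $3g-3+l$ and this fixed part is canonically identified with $\eT_{\fixedDR}\cong (\Omega_{\fixedDR})^\vee$.

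For the moving part, multiplicativity of Euler classes on the two distinguished triangles yields
\[
e\!\left(\mathrm{N}^{\mathrm{vir}}_{\fixedDR}\right) \;=\; e\!\left((L\fixedIncDR^* L\ForMapDRtoRSM^* \eF_{\M^r})^{\mov}\right) \cdot \frac{e(\calF_v^{\,\mov})\,\prod_i e(\calF_i^{\,\mov})}{\prod_i e(\calF_{\Theta_i}^{\,\mov})}.
\]
The first factor is given by lemma \ref{fixed_moving_parts_pullback_of_RSM}(ii) and splits into the three cases $(g,l)=(0,1),(0,2)$ and $3g-3+l\geq 0$. The vertex factor $e(\calF_v^{\,\mov})$ is provided by corollary \ref{vertex_contributions_corollary}; the edge factors $e(\calF_i^{\,\mov})$ by lemma \ref{edge_contributions_lemma}; and the flag factors $e(\calF_{\Theta_i}^{\,\mov})$ by lemma \ref{distinguished_triangle_for_flag_contributions}. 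I would then substitute, noting in particular the cancellations involving the Hodge class $c_{1/t}(-\bm{b}^*\bm{\rho}_*\omega_{\bm{\rho}})$ (which appears both in the $\M^r$-contribution and in $\calF_v$) and the cancellation of the $r^{\pm 1}$-powers from the flag nodes at which $r\mid \part_i$ against the extra $\O_{\fixedDR}$-summands appearing in the distinguished triangles of lemma \ref{sheaves_on_fixed_locus_pullbacks}.

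The bookkeeping is the only real difficulty: one has to keep careful track of the exponents of $t$ and of $t/r$ contributed by each piece, and of which factorials and powers of $\part_i$ are inverted when moving from $e$ to $1/e$. I would handle the generic case $3g-3+l\geq 0$ first and then redo the combinatorics for the two special cases $(g,l)=(0,1)$ and $(0,2)$, where the absence of a vertex component simplifies the vertex factor but introduces the modifications already recorded in lemmas \ref{edge_contributions_lemma} and \ref{distinguished_triangle_for_flag_contributions}. Inverting and collecting terms then matches the three displayed formulas of the corollary, finishing the proof.
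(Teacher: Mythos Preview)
Your proposal is correct and follows essentially the same route as the paper: split via the distinguished triangle of \ref{dual_pot_re_vir_loc}, use the normalisation triangle of \ref{fixed_moving_nu_morphism_notation} to show $(L\fixedIncDR^*\eF_{\ForMapDRtoRSM})^{\fix}=0$, identify the fixed part with the locally free $F_0$ of lemma \ref{fixed_moving_parts_pullback_of_RSM} and invoke theorem \ref{localisation_theorem_GraberPand} and \cite[Prop.~5.5]{Intrinsic} to conclude smoothness and the tangent identification, then assemble the moving Euler class multiplicatively from lemmas \ref{fixed_moving_parts_pullback_of_RSM}, \ref{distinguished_triangle_for_flag_contributions}, \ref{edge_contributions_lemma} and corollary \ref{vertex_contributions_corollary}. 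The only point the paper makes more explicit than you do is that the Hodge cancellation in the generic case is precisely Mumford's relation $c_s\big((\bm{\rho}_*\omega_{\bm{\rho}})^\vee\big)\,c_s\big(\bm{\rho}_*\omega_{\bm{\rho}}\big)=1$, and that the special-case exponents are pinned down by the identities $m=\lfloor\part_1/r\rfloor$ for $(g,l)=(0,1)$ and $m=\lfloor\part_1/r\rfloor+\lfloor\part_2/r\rfloor+\epsilon$ with $\epsilon\in\{0,1\}$ for $(g,l)=(0,2)$.
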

\begin{proof}
Take the distinguished triangle of perfect obstruction theories from corollary \ref{POT_for_M1/r_main_corollary}. Restricting to $\fixedDR$ and taking either either the fixed or the moving part gives
\[
\xymatrix@C=1.25em{
 \big(L\fixedIncDR^* \eF_{\ForMapDRtoRSM}\big)^{\fixmov} \ar[r] &  \big(L\fixedIncDR^* \eF_{\M^{1/r}} \big)^{\fixmov}  \ar[r] & \big(L\fixedIncDR^*L\ForMapDRtoRSM^*\eF_{\M^r}\big)^{\fixmov}   \ar[r]& \big(L\fixedIncDR^* \eF_{\ForMapDRtoRSM}\big)^{\fixmov}  [1].
}
\]
We have from lemmas \ref{distinguished_triangle_for_vertex_contributions}, \ref{distinguished_triangle_for_flag_contributions} and \ref{edge_contributions_lemma} that $ (L\fixedIncDR^* \eF_{\ForMapDRtoRSM})^{\fix} \cong 0$. So we have an isomorphim $ (L\fixedIncDR^* \eF_{\M^{1/r}} )^{\fix} \cong (L\fixedIncDR^*L\ForMapDRtoRSM^*\F_{\M^{r}} )^{\fix}$ which is shown to be locally free in lemma \ref{fixed_moving_parts_pullback_of_RSM}. So, by theorem \ref{localisation_theorem_GraberPand} the fixed part $(L\fixedIncDR^*L\ForMapDRtoRSM^*\F_{\M^{r}})^{\fix}$ is a perfect obstruction theory for $\fixedDR$.   The isomorphism $(L\fixedIncDR^*\F_{\M^{1/r}} )^{\fix} \cong (\Omega_{\fixedDR})^\vee$ follows from \cite[Prop. 5.5]{Intrinsic}. \\

Taking the moving part of the above distinguished triangle and using the notation from \ref{fixed_moving_nu_morphism_notation} we have
\begin{align*}
\frac{1}{e\big(\mathrm{N}^{\mathrm{vir}}_{\fixedDR}\big) } 
&=
\frac{1}{e\Big(\big(L\fixedIncDR^* L\ForMapDRtoRSM^*\F_{\M}\big)^{\mov}\Big) e\Big(\big(L\fixedIncDR^*\F_{\ForMapDRtoRSM}\big)^{\mov}\Big)}\\
&=
\frac{\prod_i e\Big(\calF_{\Theta_i}^{\,\mov}\Big) }{e\Big(\big(L\fixedIncDR^* L\ForMapDRtoRSM^*\F_{\M}\big)^{\mov}\Big) e\Big(\calF_{v}^{\,\mov}\Big) \prod_i e\Big(\calF_{i}^{\,\mov}\Big)}.
\end{align*}
These contributions are calculated in lemmas \ref{fixed_moving_parts_pullback_of_RSM}, \ref{distinguished_triangle_for_flag_contributions}, \ref{edge_contributions_lemma} and corollary \ref{vertex_contributions_corollary}. In the cases where $3g-3+l<0$ we have used:
\begin{enumerate}[leftmargin=1.5em]
\item[] \textit{For $(g,l)=(0,1)$:} $m= \frac{1}{r}(\part_1-1) = \left\lfloor \frac{\part_1}{r} \right\rfloor $.
\item[] \textit{For $(g,l)=(0,2)$:} $m= \frac{1}{r}(\part_1+\part_2) = 
\left\{\begin{array}{cl}
\left\lfloor \frac{\part_1}{r} \right\rfloor + \left\lfloor \frac{\part_2}{r} \right\rfloor,  & \mbox{if $r\mid \part_1$}; \vspace{0.1cm}\\
1+\left\lfloor \frac{\part_1}{r} \right\rfloor + \left\lfloor \frac{\part_2}{r} \right\rfloor, & \mbox{if $r\nmid \part_1$.}
\end{array}\right.
$
\end{enumerate}
In the cases where $3g-3+l\geq0$ we have used the fact that
\[
c_{s}\big((\bm{\rho}_* \omega_{\bm{\rho}})^\vee\big)  c_{s}\big(\bm{\rho}_* \omega_{\bm{\rho}}\big) =1,
\]
a proof of which can be found in \cite[Prop. 5.16]{ACG_Geometry}.
\end{proof}
\mbox{}

\begin{corollary}\label{cotangent_F1/r->F_is_zero}
The morphism of simple fixed loci $\ForMapDRtoRSMfixed: \fixedDR \rightarrow \fixedRSM$ which forgets the $r$-th root section is étale.
\end{corollary}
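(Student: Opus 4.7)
My plan is to deduce the étaleness of $\ForMapDRtoRSMfixed$ directly from the computations of fixed and moving parts already carried out in the proof of Corollary~\ref{final_localisation_formula_result}. Specifically, Lemmas~\ref{distinguished_triangle_for_vertex_contributions}, \ref{distinguished_triangle_for_flag_contributions} and \ref{edge_contributions_lemma} together establish the vanishing $(L\fixedIncDR^*\eF_{\ForMapDRtoRSM})^{\fix} \cong 0$: in each of the vertex, edge, and flag-node distinguished triangles every equivariant line bundle appearing has a nontrivial $\C^*$-weight (either $\tfrac{1}{r}$ on the $\bm{\beta}$-side or $1$ on the $\bm{\alpha}$-side, coming from the local weight calculation on the $r$-orbifold fibre $V = [\Spec \C[z]/\mu_r]$), so the pulled-back relative obstruction theory is purely moving. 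This vanishing requires no further calculation.

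Next I would invoke the cartesian square from Lemma~\ref{simple_fixed_locus_of_M1/r_Pullbacks}, which expresses $\ForMapDRtoRSMfixed$ as the restriction of $\ForMapDRtoRSM$ to the simple fixed loci. Coupled with \cite[Prop.~5.5]{Intrinsic} (the same input used at the very end of the proof of Corollary~\ref{final_localisation_formula_result} to identify the fixed part of $L\fixedIncDR^*\eF_{\M^{1/r}}$ with $\eT_{\fixedDR}$), this identifies $(L\fixedIncDR^*\eF_{\ForMapDRtoRSM})^{\fix}$ with the dual of the relative cotangent complex $L_{\ForMapDRtoRSMfixed}$. Since the left-hand side vanishes and $L_{\ForMapDRtoRSMfixed}$ has amplitude in $[-1,0]$, both of its relevant cohomology sheaves are forced to be zero, whence $L_{\ForMapDRtoRSMfixed} \cong 0$.

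A cotangent complex quasi-isomorphic to zero is precisely the condition of being formally étale. Combined with Lemma~\ref{fixed_locus_etale_morphism}, which already exhibits $\ForMapDRtoRSMfixed$ as a finite representable morphism of constant degree $r^l$ (hence of finite presentation), this yields that $\ForMapDRtoRSMfixed$ is étale. The main point of the argument is conceptual rather than technical: I do not foresee any obstacle beyond correctly quoting the previously computed vanishing and the identification of the fixed POT with the cotangent complex on the fixed locus, since all of the nontrivial geometric input has already been absorbed into the earlier weight analyses.
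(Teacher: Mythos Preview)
Your strategy is close to the paper's in spirit, but the key identification step has a gap. You invoke \cite[Prop.~5.5]{Intrinsic} to identify $(L\fixedIncDR^*\eF_{\ForMapDRtoRSM})^{\fix}$ with the dual of $\bbL_{\ForMapDRtoRSMfixed}$, but that proposition is an \emph{absolute} statement (a locally free obstruction theory forces smoothness of the space); it does not produce a comparison between the fixed part of a pulled-back \emph{relative} obstruction theory and the relative cotangent complex of the restricted map. To use it you would first need to know that $(L\fixedIncDR^*\eE_{\ForMapDRtoRSM})^{\fix}$, together with some morphism to $\eL_{\ForMapDRtoRSMfixed}$, is itself a relative perfect obstruction theory for $\ForMapDRtoRSMfixed$. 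This relative analogue of the Graber--Pandharipande fixed-part theorem is not established in the paper and does not follow formally from the cartesian square of Lemma~\ref{simple_fixed_locus_of_M1/r_Pullbacks}: the base-change map $L\fixedIncDR^*\eL_{\ForMapDRtoRSM}\to\eL_{\ForMapDRtoRSMfixed}$ is not obviously an isomorphism, since neither leg of that square is flat. There is also a mild circularity in citing Lemma~\ref{fixed_locus_etale_morphism} for finiteness and finite presentation: that lemma's proof explicitly defers \'etaleness to the present corollary and only computes the degree at a geometric point.

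The paper circumvents these issues by staying on the \emph{absolute} side, where \cite[Prop.~5.5]{Intrinsic} applies directly. It first observes that the analysis of Lemma~\ref{fixed_moving_parts_pullback_of_RSM} works verbatim on $\fixedRSM$, so both $\fixedDR$ and $\fixedRSM$ are smooth and their tangent complexes are locally free. It then builds a commuting ladder comparing the map $\eT_{\fixedDR}\to\ForMapDRtoRSMfixed^*\eT_{\fixedRSM}$ with the isomorphism $(L\fixedIncDR^*\eF_{\M^{1/r}})^{\fix}\xrightarrow{\ \sim\ }\ForMapDRtoRSMfixed^*\big(L(\fixedIncRSM)^*\eF_{\M^r}\big)^{\fix}$ supplied precisely by the vanishing you computed, and reads off that the tangent map is an isomorphism. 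Your vanishing is exactly the right input, but the passage to $\bbL_{\ForMapDRtoRSMfixed}\cong 0$ has to be routed through the absolute tangent complexes rather than asserted directly.
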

\begin{proof}
Consider the morphism of equivariant cotangent complexes $\eT_{\fixedDR} \rightarrow \ForMapDRtoRSMfixed^*\eT_{\fixedRSM}$ arising from $\ForMapDRtoRSMfixed: \fixedDR \rightarrow \fixedRSM$. Taking the fixed part gives a morphism $\eT_{\fixedDR} \rightarrow (\ForMapDRtoRSMfixed^*\eT_{\fixedRSM})^\fix$. Applying this to the 
morphism of perfect obstruction theories from corollary \ref{POT_for_M1/r_main_corollary} gives the left square in the following commuting diagram:
\[
\xymatrix@R=1.5em{
\eT_{\fixedDR}\ar[r] \ar[d]_{u_1} &  \big(  {\ForMapDRtoRSMfixed}^*  \eT_{\fixedRSM} \big)^\fix\ar[d]_{u_2} \ar[r]& {\ForMapDRtoRSMfixed}^*     \eT_{\fixedRSM}  \ar[d]_{u_3}\\
 \big(L\fixedIncDR^*\eF_{\M^{1/r}} \big)^\fix\ar[r]^{v_1} &  \big(  {\ForMapDRtoRSMfixed}^* L{(\fixedIncRSM)}^* \eF_{\M^r} \big)^\fix \ar[r]^{v_2}& {\ForMapDRtoRSMfixed}^* \big(  L{(\fixedIncRSM)}^* \eF_{\M^r} \big)^\fix
}
\]
The right square arises from the analysis of $L\fixedIncDR^*L\ForMapDRtoRSM^*\eF_{\M^r}\cong {\ForMapDRtoRSMfixed}^*  L{(\fixedIncRSM)}^* \eF_{\M^r} $ from lemma \ref{fixed_moving_parts_pullback_of_RSM} which also holds for the fixed part of $L{(\fixedIncRSM)}^* \eE_{\M^r}$. Indeed, this gives an isomorphism
\[
v_2:  \big(  {\ForMapDRtoRSMfixed}^* L{(\fixedIncRSM)}^* \eF_{\M^r} \big)^\fix
 \overset{\sim}{\longrightarrow}
 {\ForMapDRtoRSMfixed}^* \big(  L{(\fixedIncRSM)}^* \eF_{\M^r} \big)^\fix 
\]
Now, using both corollary \ref{final_localisation_formula_result} and \cite[Prop 5.5]{Intrinsic} we have that $\fixedDR$ and $\fixedRSM$ are smooth. Hence, $u_1$ and $u_3$ are isomorphisms.  Moreover, corollary \ref{final_localisation_formula_result} shows that $v_1$ is an isomorphism, showing that $\eT_{\fixedDR} \rightarrow \ForMapDRtoRSMfixed^*\eT_{\fixedRSM}$ is an isomorphism. 
\end{proof}
\mbox{}

\section{Proving Theorem \ref{rHurwitz_theorem} - The \texorpdfstring{$r$}{r}-ELSV formula} 

\subsection{Choice of Equivariant Lift}
We use arguments based on \cite[\S 4.2]{FantechiPand}. 

\begin{re}\label{Sym^m_P^m_identification}
Recall the \textit{branch-type morphism} from \cite{Leigh_Ram} discussed earlier in \ref{branch_type_morphism_re}. It is a morphism of stacks
\[
\mathtt{br} :\M^{\frac{1}{r}}  \longrightarrow \Sym^{m} \P^1 
\]
commuting with the branch morphism of \cite{FantechiPand} via the diagram
\[
\xymatrix@R=1.5em{
\M^{\frac{1}{r}} \ar[r]^{\mathtt{br}\hspace{0.75em}}\ar[d] &\Sym^{m} \P^1 \ar[d]^{\Delta}\\
 \M \ar[r]^{br} & \Sym^{rm} \P^1  
}
\]
where $\Delta$ is defined by $\sum_i x_i \mapsto \sum_i rx_i$. We make the identification $\Sym^{m} \P^1  \cong \P^m$ and extend the $\C^*$-action so that $\mathtt{br}$ is equivariant with $\C^*$ acting on $\P^m$ via
\[
c \cdot [ y_0: y_1 : \cdots : y_m]  = [y_0: c y_1 :\cdots: c^m y_m].
\]\\
\end{re}

\vspace{-0.4cm}
\begin{re}
Let $H$ be the hyperplane class in $\P^m$. Then, with the identification in \ref{Sym^m_P^m_identification} the class of a point in $\Sym^m\P^1$ corresponds to the class $c_1(\O(H))^m$. Hence, after a choice of equivariant lift for $\O(H)$, we may apply the localisation formula to the integral:
\[
\int_{\left[\M^{\frac{1}{r}}\right]^{\mathrm{vir}}} \mathtt{br}^*\big(c_1(\O(H))^m\big) .
\] \\
\end{re}

\vspace{-0.4cm}
\begin{re}\label{hyperplane_linearisations}
Let $y_0, \ldots, y_m$ be projective coordinates on $\P^m$ and define the point $h_n \in \P^m$ by the ideal 
\[
\big<\{y_0,\ldots ,y_m\} \setminus \{y_n\}\big>
\] 
Then we have $h_n \in (\P^m)^{\C^*}$ and corresponds to the point $[(m-k)\cdot(0) + k\cdot(\infty)]$ in $\Sym^{m}\P^1$. Moreover we know from the discussion in \ref{simple_non-simple_fixed_loci_of_M1/r} that
\[
\mathtt{br}\Big(\big(\M^{\frac{1}{r}}\big)^{\C*}\Big) \subseteq \{h_0, \ldots, h_m\}.
\]

Denote by $\calH_n$ the unique $\C^*$-linearisation of $\O(H)$ having weight $0$ at $h_n$. We apply the localisation formula to 
\[
\int_{\left[\M^{\frac{1}{r}}\right]^{\mathrm{vir}}} \mathtt{br}^*\left( \prod_{n=1}^m c_1(\calH_n )\right).
\] \\
\end{re}

\vspace{-0.4cm}
\begin{remark}
Different choices of the $\C^*$-linearisations of $\O(H)$ will lead to different \textit{equivariant} integrands. However, they will all have the same non-equivariant limit so we follow \cite[\S 4.2]{FantechiPand} and choose the equivariant lift with the simplest localisation. \\
\end{remark}

\subsection{Application of the Localisation Formula}

\begin{re}
Recall from \ref{simple_non-simple_fixed_loci_of_M1/r} that the fixed loci is decomposed as 
\[
\big(\M^{\frac{1}{r}}\big)^{\C^*} = \fixedDR \sqcup \mbox{$\bigsqcup_{n=1}^{m}$} \nonsimpDR_n
\]

where $\fixedDR := (\M^{\frac{1}{r}})^{\C^*} \cap \mathtt{br}^{-1}(h_0)$ and $\nonsimpDR_n := (\M^{\frac{1}{r}})^{\C^*} \cap \mathtt{br}^{-1}(h_n)$. 
\end{re}
\mbox{}

\begin{lemma}\label{weights_vanishing_pullback_classes}
Using the linearisations in \ref{hyperplane_linearisations} we have
\begin{enumerate}
\item $\big[\,\nonsimpDR_n \big]^{\mathrm{vir}} \cap \mathtt{br}^*\Big( \prod_{n=1}^m c_1(\calH_n )\Big) = 0$ 
\item $\big[\,\fixedDR\,\big]^{\mathrm{vir}} \cap \mathtt{br}^*\Big( \prod_{n=1}^m c_1(\calH_n )\Big) = \big[\,\fixedDR\,\big]^{\mathrm{vir}} \cdot m! t^{m}$ 
\end{enumerate}
\end{lemma}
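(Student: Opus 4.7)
The proof relies on the basic fact that the equivariant first Chern class of an equivariant line bundle, restricted to an isolated $\C^*$-fixed point, equals the weight of the fiber at that point times the equivariant parameter $t$. The plan is to compute the weights of $\calH_n$ at each of the fixed points $h_0, \ldots, h_m \in \P^m$, and then to exploit that $\mathtt{br}$ factors through $h_n$ on $\nonsimpDR_n$ and through $h_0$ on $\fixedDR$.

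First I would pin down the weights of the standard linearisation of $\O(H) = \O(1)$. With the convention $c \cdot [y_0 : \cdots : y_m] = [y_0 : c y_1 : \cdots : c^m y_m]$, the tautological subbundle $\O(-1)$ inherits a natural linearisation from the diagonal action on $\C^{m+1}$ by weights $0, 1, \ldots, m$, and its fiber over $h_n$ is spanned by the $n$-th standard basis vector. Hence $\O(-1)$ has weight $n$ at $h_n$, and so $\O(H)$ with its standard linearisation has fiber-weight $-n$ at $h_n$. Since $\calH_n$ is by definition the unique equivariant lift having weight $0$ at $h_n$, it differs from the standard linearisation by tensoring with the trivial character of weight $n$; consequently $\calH_n$ has fiber-weight $n - k$ at $h_k$.

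For part (i), the morphism $\mathtt{br}|_{\nonsimpDR_n}$ factors through the inclusion of the closed point $h_n \hookrightarrow \P^m$, so $\mathtt{br}^*\calH_n|_{\nonsimpDR_n}$ is the pullback of a trivial equivariant line bundle of weight $0$. Therefore $\mathtt{br}^* c_1(\calH_n)|_{\nonsimpDR_n} = 0$ already at the level of the equivariant Chow ring. Since this single vanishing factor appears in the product $\prod_{n=1}^m c_1(\calH_n)$, the entire pullback is zero on $\nonsimpDR_n$, and capping with the virtual class yields $0$.

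For part (ii), the morphism $\mathtt{br}|_{\fixedDR}$ factors through $h_0$, so each factor $\mathtt{br}^* c_1(\calH_n)|_{\fixedDR}$ equals the constant $w(\calH_n, h_0) \cdot t = n \cdot t$. Multiplying over $n = 1, \ldots, m$ gives $m! \, t^m$, and capping with $[\fixedDR]^{\mathrm{vir}}$ produces the claimed expression. The only real point of care is keeping the equivariant weight conventions consistent (in particular the sign switch between $\O(-1)$ and $\O(1)$); beyond that the argument is a direct application of the functoriality of equivariant Chern classes together with the set-theoretic location of the fixed loci inside $\mathtt{br}^{-1}(\{h_0, \ldots, h_m\})$.
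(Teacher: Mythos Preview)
Your proposal is correct and follows essentially the same argument as the paper: both observe that $\mathtt{br}$ restricted to $\nonsimpDR_n$ (resp.\ $\fixedDR$) factors through the fixed point $h_n$ (resp.\ $h_0$), and then read off the weights of the trivial pulled-back bundle $\mathtt{br}^*\calH_n$ to be $0$ (resp.\ $n$), giving vanishing (resp.\ $m!\,t^m$). Your explicit weight computation $w(\calH_n,h_k)=n-k$ is a slight elaboration of what the paper simply cites from \cite{FantechiPand}.
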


\begin{proof}
For both, the proof is the same as that in \cite[\S 4.2]{FantechiPand}. For part 1 we have that $\mathtt{br}^* \calH_n$ restricted to $\nonsimpDR_n$   is the trivial bundle with trivial linearisation (i.e weight 0). Hence, the given intersection vanishes. For part 2 we have that $\mathtt{br}^* \calH_n$ restricted to $\fixedDR$ is the trivial bundle with weight $n$. 
\end{proof}
\mbox{}

\begin{corollary}[\textit{Theorem \ref{rHurwitz_theorem}}]
When $3g-3+l\geq 0$ there is an equality:
\[
\int_{\left[\M^{\frac{1}{r}}\right]^{\mathrm{vir}}} \mathtt{br}^*[ p_1+ \cdots + p_{m}]
=
 m!\, r^{m+ l+ 2g-2} 
 \left(
 \prod_{i=1}^{l} \frac{\left(\frac{\part_i}{r}\right)^{\lfloor \frac{\part_i}{r} \rfloor}}{\lfloor \frac{\part_i}{r} \rfloor!}
 \right)
~ \int_{\Mbar^{\frac{1}{r}, \mathtt{a}}_{g,l}} 
\frac{c(-R\bm{\rho}_* \mathcal{L})}{\prod_{j=1}^{l}(1 - \frac{\part_i}{r}\psi_j)}
\]
where $\bm{\rho}$ and $\L$ are the universal curve and $r$-th root of $\Mbar^{\frac{1}{r}, \mathtt{a}}_{g,l}$, while $\mathtt{a}= (a_1,\ldots, a_{l})$ is a vector with $a_i\in \{0,\ldots, r-1\}$ defined by $\part_i = \left\lfloor \frac{\part_i}{r} \right\rfloor r + (r-1 -a_i)$  and $l=l(\part)$.\\

In the special cases where $3g-3+l<0$ we interpret this formula by defining:
\begin{flalign*}
\hspace{1.5em}
(i)~~&\int_{\Mbar^{\frac{1}{r}, \mathtt{a}}_{0,1}} 
\frac{c(-R\bm{\rho}_* \mathcal{L})}{(1 - \frac{\part_1}{r}\psi_1)}
=
\frac{1}{\part_1^2}, ~~\mbox{when $r|(\part_1 -1)$ and $0$ otherwise},&\\
(ii)~~&\int_{\Mbar^{\frac{1}{r}, \mathtt{a}}_{0,2}} 
\frac{c(-R\bm{\rho}_* \mathcal{L})}{\prod_{j=1}^{2}(1 - \frac{\part_j}{r}\psi_j)}
=
\frac{1}{\part_1+\part_2},~~\mbox{when $r|(\part_1 +\part_2)$ and $0$ otherwise}.&
\end{flalign*}
\end{corollary}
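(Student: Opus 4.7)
The plan is to apply the virtual localisation formula of Theorem~\ref{localisation_theorem_GraberPand} to the equivariant integral
\[
\int_{[\M^{\frac{1}{r}}]^{\mathrm{vir}}} \mathtt{br}^*\Bigl(\prod_{n=1}^{m} c_1(\calH_n)\Bigr),
\]
whose non-equivariant limit is $H^r_{g,\part}$. Lemma~\ref{weights_vanishing_pullback_classes} shows that this equivariant lift restricts to zero on every non-simple component $\nonsimpDR_n$ and to the constant $m!\,t^m$ on the simple locus $\fixedDR$, so the localisation sum collapses to
\[
H^r_{g,\part} = m!\,t^m \int_{[\fixedDR]^{\mathrm{vir}}} \frac{1}{e(\mathrm{N}^{\mathrm{vir}}_{\fixedDR})}.
\]

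In the case $3g-3+l\geq 0$, Theorem~\ref{VL_theorem} gives that $\fixedDR$ is smooth of dimension $3g-3+l$, so $[\fixedDR]^{\mathrm{vir}}=[\fixedDR]$. I would substitute the formula for $1/e(\mathrm{N}^{\mathrm{vir}}_{\fixedDR})$ from Corollary~\ref{final_localisation_formula_result} and push forward along $\bm{b}:\fixedDR\to\Mbar^{\frac{1}{r},\mathtt{a}}_{g,l}$, whose degree $(\part_1\cdots\part_l)^{-1}$ is given in Corollary~\ref{degree_of_map_to_spin_curves} and precisely cancels the $\prod_i\part_i$ appearing in the normal bundle contribution. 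Tracking the $r$- and $t$-powers reduces the equality to
\[
H^r_{g,\part} = m!\, r^{m-g+1}\, t^{3g-3+l}\, \prod_{i=1}^{l}\frac{(\part_i/r)^{\lfloor \part_i/r\rfloor}}{\lfloor \part_i/r\rfloor!}\int_{\Mbar^{\frac{1}{r},\mathtt{a}}_{g,l}} \frac{c_{r/t}(-R\bm{\rho}_*\L)}{\prod_i(1-\part_i\psi_i/t)}.
\]

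The heart of the argument is the homogeneity identity
\[
\int_{\Mbar^{\frac{1}{r},\mathtt{a}}_{g,l}} \frac{c_{r/t}(-R\bm{\rho}_*\L)}{\prod_i(1-\part_i\psi_i/t)} = \Bigl(\tfrac{r}{t}\Bigr)^{3g-3+l} \int_{\Mbar^{\frac{1}{r},\mathtt{a}}_{g,l}} \frac{c(-R\bm{\rho}_*\L)}{\prod_i(1-\part_i\psi_i/r)},
\]
which I would establish by expanding $c_{r/t}(-R\bm{\rho}_*\L)=\sum_a c_a(-R\bm{\rho}_*\L)(r/t)^a$ and $\prod_i(1-\part_i\psi_i/t)^{-1}=\sum_{\vec k}\prod_i(\part_i\psi_i)^{k_i}t^{-\sum k_i}$, and observing that integration selects the components with $a+\sum k_i=3g-3+l$, so every surviving monomial carries a uniform $t$-weight of $t^{-(3g-3+l)}$. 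The substitution $t\mapsto r$ inside the integral is therefore legal up to the global factor $(r/t)^{3g-3+l}$. Plugging this back, the $t$-powers cancel exactly and the $r$-powers combine as $r^{m-g+1}\cdot r^{3g-3+l}=r^{m+l+2g-2}$, producing the stated formula. The main obstacle is precisely this $t$-cancellation, but it is forced by the fact that the classical integral $H^r_{g,\part}$ on the left-hand side is $t$-independent; the homogeneity identity is what makes the cancellation manifest and produces the non-equivariant integrand on $\Mbar^{\frac{1}{r},\mathtt{a}}_{g,l}$.

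For the unstable cases $(g,l)=(0,1),(0,2)$, the moduli spaces $\Mbar^{\frac{1}{r},\mathtt{a}}_{0,*}$ are not defined in the standard sense, but $\fixedDR$ reduces to a point with explicit automorphism group and the same localisation collapse applies using the special-case formulas in Corollary~\ref{final_localisation_formula_result}. Direct numerical evaluation then yields the stated right-hand sides $1/\part_1^2$ and $1/(\part_1+\part_2)$, with the divisibility hypotheses $r\mid(\part_1-1)$ respectively $r\mid(\part_1+\part_2)$ emerging as exactly the conditions for $\M^{1/r}$ to be non-empty.
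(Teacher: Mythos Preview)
Your proposal is correct and follows essentially the same approach as the paper: apply virtual localisation with the equivariant lift $\prod_n c_1(\calH_n)$, use Lemma~\ref{weights_vanishing_pullback_classes} to kill the non-simple loci and leave $m!\,t^m$ on $\fixedDR$, substitute the normal bundle formula from Corollary~\ref{final_localisation_formula_result}, push forward along $\bm{b}$, and take the non-equivariant limit. Your explicit homogeneity identity is a helpful spelling-out of what the paper compresses into the phrase ``taking the non-equivariant limit''; the paper's proof is terser but the content is identical.
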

\begin{proof}
Considering the equivariant intersection with lifts from \ref{hyperplane_linearisations} we have
\[
\int_{\left[\M^{\frac{1}{r}}\right]^{\mathrm{vir}}} \mathtt{br}^*\left( \prod_{n=1}^m c_1(\calH_n )\right) 
=
 \big[\fixedDR\big]^{\mathrm{vir}}  \cap \frac{m!\, t^m}{e\left(\mathrm{N}^{\mathrm{vir}}_{\fixedDR}\right)}
\] 
from lemma \ref{weights_vanishing_pullback_classes}. For $3g-3+l\geq 0$ we apply corollary \ref{final_localisation_formula_result} to show the intersection is equal to
\[
m!\, r^{m+l+2g-2}
\left(
 \prod_{i=1}^{l} \frac{\part_i \left(\frac{\part_i}{r}\right)^{\lfloor \frac{\part_i}{r} \rfloor}}{\lfloor \frac{\part_i}{r} \rfloor!}
 \right)
\left(\big[\fixedDR\big] \cap   \bm{b}^*\frac{ \left(\frac{t}{r}\right)^{3g-3+l}c_{\frac{r}{t}}\big([R\bm{\rho}_* \L ]^\vee\big)}{\prod_{i=1}^{l}\left(1 - \left(\frac{r}{t}\right)\frac{\part_i }{r}c_1(\bm{\sigma}_i^*\omega_{\bm{\rho}})\right)}
\right)
\]
where $\bm{b}:\fixedDR\rightarrow \Mbar^{\frac{1}{r}, \mathtt{a}}_{g,l(\part)}$ is the degree $(\part_1\cdots\part_l)^{-1}$ morphism defined in lemma \ref{degree_of_map_to_spin_curves}. The desired result follows from pushing forward via $\bm{b}$ and taking the non-equivariant limit.  For $3g-3+l<0$ we apply the special cases of corollary \ref{final_localisation_formula_result}. 
\end{proof}


\section{Proving Theorem \ref{POT_theorem} - Equivariant Perfect Obstruction Theory} \label{equi_pot_section}

\notationConvention\\

\subsection{Background on Equivariant Perfect Obstruction Theories}\label{background_equivariant_POT_subsection}
\begin{re}[\textit{Equivariant Cotangent Complex}]\label{equivariant_cotangent_complex}
For a stack $\calX$ with a $\C^*$-action we denote the derived category of equivariant coherent sheaves by  $\mathrm{D}^{\mathtt{e}}_b(\calX)$.  For an equivariant morphism $\bm{\eta}:\calX\rightarrow \calY$ there is an \textit{equivariant cotangent complex} $\eL_\eta \in \mathrm{D}^{\mathtt{e}}_b(\calX)$ (originally defined by Illusie in \cite[Ch. VII 2.2]{Illusie_II}) with the following properties:
\begin{enumerate}
\item Without the $\C^*$-linearisation, $\eL_\eta$ is the cotangent complex $\bbL_\eta \in \mathrm{D}_b(\calX)$.
\item If the following is a commuting diagram of equivariant morphisms 
\begin{align*}
\xymatrix@R=1.5em{
\calX \ar[r]^{\bm{\eta}}\ar[d]_{\bm{\zeta}} & \calY \ar[d]^{\bm{\zeta'}}\\
\calW \ar[r]^{\bm{\eta'}} & \calZ
} 
\end{align*}
then there is a natural morphism $L\bm{\eta}^*\eL_{\bm{\zeta'}} \rightarrow \eL_{\bm{\zeta}}$ in $\mathrm{D}^{\mathtt{e}}_b(\calX)$ which is an isomorphism when the diagram is cartesian and one of $\bm{\zeta'}$ or $\bm{\eta'}$ is flat. Moreover, after forgetting the $\C^*$-linearisations this morphism is the natural morphism $L\bm{\eta}^*\bbL_{\bm{\zeta'}} \rightarrow \bbL_{\bm{\zeta}}$ in $\mathrm{D}_b(\calX)$.
\item The equivariant cotangent complex is compatible with composition. That is, if the following is a commuting diagram of equivariant morphisms 
\begin{align*}
\xymatrix@R=1.5em{
\calX \ar[r]^{\bm{\eta}}\ar[d]_{\bm{\zeta}} & \calY \ar[d]^{\bm{\zeta'}}\ar[r]^{\bm{\varkappa}}& \calU \ar[d]^{\bm{\zeta''}}\\
\calW \ar[r]^{\bm{\eta'}} & \calZ \ar[r]^{\bm{\varkappa'}} & \calV
} 
\end{align*}
then there is the following commuting diagram in $\mathrm{D}^{\mathtt{e}}_b(\calX)$:
\begin{align*}
\xymatrix@=1em{
L(\bm{\varkappa}\circ \bm{\eta})^*\eL_{\bm{\zeta''}} \ar[rr] \ar[rd]&& \eL_{\zeta} \\
& L\bm{\eta}^* \eL_{\bm{\zeta'}} \ar[ru]&
} 
\end{align*}
\item If $\bm{\eta}:\calX\rightarrow \calY$ and $\bm{\zeta}:\calY\rightarrow \calZ$ are equivariant morphisms then there is a distinguished triangle in $\mathrm{D}^{\mathtt{e}}_b(\calX)$ given by
\begin{align*}
\xymatrix{
L\bm{\eta}^*\eL_{\bm{\zeta}} \ar[r] & \eL_{\bm{\zeta}\circ \bm{\eta}}  \ar[r] & \eL_{\bm{\eta}}  \ar[r]& Lg^*\eL_{\bm{\zeta}}[1].
}
\end{align*}
Again, after forgetting the $\C^*$-linearisations this is the usual distinguished triangle of the cotangent complex. \\
\end{enumerate}
\end{re}

\begin{re}\label{equivariant_compatibility_square_re}
If we have commuting diagrams of equivariant morphisms 
\begin{align*}
\begin{array}{c}
\xymatrix@R=1.5em{
\calX \ar[r]^{\bm{\eta}}\ar[d]_{\bm{\zeta}} & \calY \ar[d]^{\bm{\zeta'}}\ar@{=}[r]& \calY \ar[d]\\
\calW \ar[r]^{\bm{\eta'}} & \calZ \ar[r] & \bullet
} 
\end{array}
\hspace{1cm}
\mbox{and}
\hspace{1cm}
\begin{array}{c}
\xymatrix@R=1.5em{
\calX \ar@{=}[r] \ar[d]_{\bm{\zeta}} & \calX \ar[d] \ar[r]^{\bm{\eta}}& \calY \ar[d]\\
\calW \ar[r] & \bullet \ar@{=}[r] & \bullet
} 
\end{array}
\end{align*}
then we can use the properties of \ref{equivariant_cotangent_complex} to show there is the following commuting diagram of morphisms in $\mathrm{D}^{\mathtt{e}}_b(\calX)$:
\[
\xymatrix@R=1.5em{
L\bm{\eta}^*\eL_{\calY} \ar[r]\ar[d] &  L\bm{\eta}^*\eL_{\bm{\zeta'}} \ar[d]\\
\eL_{\calX}  \ar[r] & \eL_{\bm{\zeta}}
}
\]\\
\end{re}

\vspace{-0.7cm}
\begin{definition}[\textit{Equivariant Perfect Obstruction Theory, \cite{GraberPand}}]
Let $\bm{\eta}:\calX\rightarrow \calY$ be an equivariant morphism of stacks. An equivariant perfect obstruction theory for $\bm{\eta}$ is a morphism
\[
\phi^{\mathtt{e}}_{\bm{\eta}}:\eE_{\bm{\eta}} \longrightarrow \eL_{\bm{\eta}} 
\]
in $\mathrm{D}^{\mathtt{e}}_b(\calX)$ such that the associated morphism $\phi_{\bm{\eta}}:\E_{\bm{\eta}} \longrightarrow \bbL_{\bm{\eta}}$ in  $\mathrm{D}_b(\calX)$ (constructed by forgetting the $\C^*$-linearisations) is a perfect obstruction theory for $\bm{\eta}$.\\
\end{definition}

\subsection{Equivariant Perfect Obstruction Theory For \texorpdfstring{$\M$}{M}}

\begin{re}[\textit{Equivariant Perfect Relative Obstruction Theory for $\ForMap$}] \label{equi_POT_for_ForMap_re}
Denote the morphisms which forget all data but the source and target families by
\[
\ForMapSM: \M \rightarrow \frakM \times \calT 
\hspace{0.5cm}
\mbox{and}
\hspace{0.5cm}
\ForMap: \M^r \rightarrow \frakM^r \times \calT 
\]
where $\ForMapSM$ is discussed more in \ref{universal_objects_of_moduli_relative_stable_maps}. These morphism fit in the diagram
\begin{align}
\begin{array}{c}
\xymatrix@R=1.5em{
\M^r \ar[r]\ar[d]^{\ForMap} &\widetilde{\M} \ar[r]\ar[d]&  \ar[d]^{\ForMapSM}  \M\\
\frakM^r \times \calT \ar@{=}[r]^{} &\frakM^r \times \calT \ar[r] & \frakM \times \calT
} 
\end{array}\label{stable_maps_forgetful_maps_diagram}
\end{align}
where $\widetilde{\M} := \M  \times_{( \frakM\times \calT)} (\frakM^r \times \calT)$. We note that the morphism  $\frakM^r \times \calT \rightarrow \frakM\times \calT$ is flat and the morphism $\M^r\rightarrow\widetilde{\M}$ is étale (as described in \cite[Lem. 2.1.2]{Leigh_Ram}).\\

We give $\frakM\times \calT$ and $\frakM^r \times \calT$ the trivial $\C^*$-action (as discussed in \ref{action_on_M1/r_and_related_spaces}) which makes every morphism in the diagram of (\ref{stable_maps_forgetful_maps_diagram}) a $\C^*$-equivariant morphism. There is an equivariant perfect relative obstruction theory $\ephi_{{\ForMapSM}}: \eE_{\ForMapSM} \rightarrow \eL_{\ForMapSM}$ for $\ForMapSM$ constructed in \cite{li2,GraVakil_Loc}. This pulls back via the top row of (\ref{stable_maps_forgetful_maps_diagram}) to give an equivariant perfect relative obstruction theory $\ephi_{{\ForMap}}: \eE_{\ForMap} \rightarrow \eL_{\ForMap}$ for $\ForMap$. \\
\end{re}

\begin{re}[\textit{Equivariant Perfect Obstruction Theory for $\M^r$}] \label{equi_POT_for_M_re}
Using the properties of the equivariant cotangent complex given in \ref{equivariant_cotangent_complex} and the construction of \ref{equi_POT_for_ForMap_re} with \cite[\S2.8]{GraVakil_Loc} there is a natural equivariant perfect obstruction theory for $\M^r$ which fits into the following commuting diagram with distinguished triangles for rows:
\begin{align}
\begin{array}{c}
\xymatrix@R=1.5em{
  {\ForMap}^* \eL_{\frakM^r \times \calT}   \ar[r] \ar@{=}[d] & \eE_{\M^r}  \ar[r] \ar[d]^{\ephi_{\M^r} } & \eE_{\ForMap} \ar[d]^{\ephi_{{\ForMap}}} \ar[r] &     {\ForMap}^* \eL_{\frakM^r \times \calT}[1]     \ar@{=}[d]\\
 {\ForMap}^* \eL_{\frakM^r \times \calT}  \ar[r] & \eL_{\M^r}  \ar[r] & \eL_{{\ForMap}} \ar[r] &   {\ForMap}^* \eL_{\frakM^r \times \calT}[1]   
} 
\end{array}
\label{equi_POT_commuting_diagram_M}
\end{align}
Moreover, this construction gives the following commutative diagram relating $\eE_{\M}$ and $\eE_{\M^r}$ via the natural forgetful morphism $\ForMapRSMtoSM: \M^r\rightarrow \M$:
\begin{align}
\begin{array}{c}
\xymatrix@R=1.5em{
  {\ForMapRSMtoSM}^* \eE_{\M}  \ar[r] \ar[d]\ar[d]^{{\ForMapRSMtoSM}^*\ephi_{\M}} & \eE_{\M^r}  \ar[r] \ar[d]^{\ephi_{\M^r} } & \eL_ {\ForMapRSMtoSM} \ar@{=}[d]\ar[r] &      {\ForMapRSMtoSM}^* \eE_{\M}  [1]    \ar[d]^{{\ForMapRSMtoSM}^*\ephi_{\M}[1]}\\
  {\ForMapRSMtoSM}^* \eL_{\M}  \ar[r] & \eL_{\M^r}  \ar[r] & \eL_ {\ForMapRSMtoSM}  \ar[r] &     {\ForMapRSMtoSM}^* \eL_{\M}        [1]   
} 
\end{array}
\label{Mbar_M_POT_comparision_diagram}
\end{align}\\
\end{re}

\subsection{Equivariant Perfect Relative Obstruction Theory for \texorpdfstring{$\ForMapDRtoRSM$}{nu}} \label{equi_pot_for_nu_subsection}

\begin{re}[\textit{Equivariant Perfect Relative Obstruction Theories for Total Spaces}]\label{equi_pot_for_total_spaces}
Let $\calF$ be a line bundle on $\calC^r$ and recall the spaces $\mathsf{Tot}\,\bm{\pi}_* \calF$ and $\mathsf{Tot}\,\calF$ from 
definition \ref{totalspace_definition}. If $\bm{\psi}: \calC_{\mathsf{Tot}\,\bm{\pi}_*\calF} \rightarrow \mathsf{Tot}\,\bm{\pi}_* \calF$ is the universal curve for $\mathsf{Tot}\,\bm{\pi}_* \calF$, then there is a natural evaluation morphism $\mathfrak{e} : \calC_{\mathsf{Tot}\,\bm{\pi}_*\calF} \rightarrow \mathsf{Tot}\,\calF$ defined by
 \begin{align*}
					\mathfrak{e} ~:~ \Big(~
					\zeta
					,~~
					\sigma: \O_{C} \longrightarrow \calF_{\zeta} ~\Big)
					\longmapsto
					\Big(~
					\zeta
					,~~
					{\gamma_{\Theta}}_*\vartheta^*\sigma: \O_{S} \longrightarrow {\gamma_{\Theta}}_*\vartheta^*\calF_{\zeta} ~\Big)
\end{align*}
which leads to the following commutative diagram where the left square is cartesian and all morphisms are equivariant:
\begin{align}\label{main_cone_squares}\begin{array}{c}
\xymatrix@R=1.5em{
\mathsf{Tot}\,\bm{\pi}_*\calF \ar[d]^{\bm{\varepsilon}} & \calC_{\mathsf{Tot}\,\bm{\pi}_*\calF} \ar[l]_{\bm{\psi}} \ar[r]^{\mathfrak{e}}\ar[d]^{\bm{\widehat{\varepsilon}}}& \mathsf{Tot}\,\calF \ar[d]^{\bm{\check{\varepsilon}}} \\
\M^r & \calC^r \ar[l]^{\bm{\pi}} \ar@{=}[r] &\calC^r
}\end{array}
\end{align}\\

\vspace{-0.2cm}
Since $\bm{\psi}$ is Gorenstein and Deligne-Mumford-type, there is a natural equivariant morphism arising from Grothendieck duality for Deligne-Mumford stacks
\[
R\bm{\psi}_* (\bm{\psi}^* \eL_{\bm{\varepsilon}} \otimes \omega_{\bm{\psi}} )[1] \longrightarrow \eL_{\bm{\varepsilon}}.
\]
Combining this with the inverse of the isomorphism $\bm{\psi}_* \eL_{\bm{\varepsilon}} \overset{\sim}{\rightarrow} \eL_{\widehat{\bm{\varepsilon}}}$ (arising from $\bm{\pi}$ being flat) and the morphism $L\mathfrak{e}^*\eL_{\check{\bm{\varepsilon}}} \rightarrow \eL_{\widehat{\bm{\varepsilon}}}$ gives the following morphism in $\mathrm{D}_b^{\mathtt{e}}(\M^r)$
\[
R\bm{\psi}_* (L\mathfrak{e}^*\eL_{\check{\bm{\varepsilon}}} \otimes \omega_{\bm{\psi}} )[1] \longrightarrow \eL_{\bm{\varepsilon}}.
\]
We denote this morphism by $\ephi_{\bm{\varepsilon}}: \eE_{\bm{\varepsilon}} \rightarrow \eL_{\bm{\varepsilon}}$. The associated morphism $\phi_{\bm{\varepsilon}}$ in $\mathrm{D}_b(\M^r)$ was shown to be a perfect relative obstruction theory for $\bm{\varepsilon}$ in \cite[Prop. 2.5]{ChangLi}.\\
\end{re}

\vspace{0.01cm}
\begin{re}[\textit{Equivariant Relative Obstruction Theory for \texorpdfstring{$\bm{\tau}$}{tau}}]
Consider the following diagram of equivariant morphisms by combining the $r$-th power maps from \ref{rth_power_maps_on_M} with the construction in \ref{equi_pot_for_total_spaces}:
\begin{align}
\begin{tikzpicture}[baseline=(current  bounding  box.center), node distance=3cm, auto,
  f->/.style={->,preaction={draw=white, -,line width=3pt}},
  d/.style={double distance=1pt},
  fd/.style={double distance=1pt,preaction={draw=white, -,line width=3pt}}]
  \node (11) {$\mathsf{Tot}\,\bm{\pi}_*\L^r$};
  \node [right of=11] (12) {$\calC_{\mathsf{Tot}\,\bm{\pi}_*\L^r}$};
  \node [right of=12] (13) {$\mathsf{Tot}\,\L^r$};
  \node [below of=11, node distance=2.2cm] (21) {$\M^r$};
  \node [right of=21] (22) {$\calC^r$};
  \node [right of=22] (23) {$\calC^r$};
  \node (11b) [right of=11, above of=11, node distance=1.05cm] {$\mathsf{Tot}\,\pi_*\L$};
  \node [right of=11b] (12b) {$\calC_{\mathsf{Tot}\,\bm{\pi}_*\L}$};
  \node [right of=12b] (13b) {$\mathsf{Tot}\,\L$};
  \node [below of=11b, node distance=2.2cm] (21b) {$\M^r$};
  \node [right of=21b] (22b) {$\calC^r$};
  \node [right of=22b] (23b) {$\calC^r$};
\draw[->]  (12b) -> (11b) node[pos=0.5, above]{${\bm{\psi}}$}; \draw[->]  (12b) -> (13b) node[pos=0.5]{$\mathfrak{e}$};  
  \draw[->]  (22b) -> (21b) node[pos=0.65, above]{$\bm{\pi^r}$}; \draw[d]  (22b) -> (23b) ; 
 \draw[->] (11b) -> (21b) node[pos=0.8]{$\bm{\beta}$};  \draw[->] (12b) -> (22b) node[pos=0.8]{$\bm{\widehat{\beta}}$}; \draw[->] (13b) -> (23b) node[pos=0.5]{$\bm{\check{\beta}}$}; 
  \draw[f->]  (12) -> (11) node[pos=0.4, above]{${\bm{\varphi}}$}; \draw[f->]  (12) -> (13) node[pos=0.5]{$\mathfrak{e}'$};  
  \draw[f->]  (22) -> (21) node[pos=0.4, above]{$\bm{\pi^r}$}; \draw[fd]  (22) -> (23); 
  \draw[f->] (11) -> (21) node[pos=0.5, align=right, left]{$\bm{\alpha}$};  \draw[f->] (12) -> (22) node[pos=0.2]{$\bm{\widehat{\alpha}}$}; \draw[f->] (13) -> (23) node[pos=0.2]{$\bm{\check{\alpha}}$};
   \draw[->]  (11b) -> (11) node[pos=0.7, align=left, above]{$\bm{\tau}$}; \draw[->] (12b) -> (12) node[pos=0.7, align=left, above]{$\bm{\widehat{\tau}}$}; \draw[->] (13b) -> (13) node[pos=0.7, align=left, above]{$\bm{\check{\tau}}$}; 
  \draw[d] (21b) -> (21) ; \draw[d] (22b) -> (22) ;  \draw[d] (23b) -> (23) ; 
\end{tikzpicture} 
\label{main_pot_diagram}
\end{align}
Here $\tau$ and $\check{\tau}$ are the $r$-th power maps from \ref{rth_power_maps_on_M}, $\mathfrak{e}$ and $\mathfrak{e'}$ are the evaluatation maps of \ref{equi_pot_for_total_spaces}. The morphisms $\bm{\varphi}$, $\bm{\psi}$, $\widehat{\bm{\alpha}}$, $\widehat{\bm{\beta}}$ and $\widehat{\bm{\tau}}$ are all defined by taking appropriate cartesian diagrams\\

As in \ref{equi_pot_for_total_spaces} we have two natural morphisms 
\[
R{\bm{\psi}}_* ({\bm{\psi}}^* \eL_{\bm{\tau}} \otimes \omega_{\bm{\psi}} )[1] \longrightarrow \eL_{\bm{\tau}}
\hspace{1cm}\mbox{and}\hspace{1cm}
L\mathfrak{e}^*\eL_{\bm{\check{\tau}}}  \longrightarrow \eL_{\bm{\widehat{\tau}}} \cong  {\bm{\psi}}^* \eL_{{\bm{\tau}}}.
\]
which combine to obtain a morphism in $\mathrm{D}_b^{\mathtt{e}}(\M^r)$ of the form
\[
\ephi_{{\bm{\tau}}}: R{\bm{\psi}}_* (L\mathfrak{e}^*\eL_{\bm{\check{\tau}}} \otimes \omega_{\bm{\psi}} )[1] \longrightarrow \eL_{\bm{\tau}}.
\]
The morphism $\ephi_{{\bm{\tau}}}$ fits into the following commutative diagram with distinguished triangles as rows:
\begin{align}
\begin{array}{c}
\xymatrix@R=1.8em{
L\bm{\tau}^*\eE_{\bm{\alpha}} \ar[r]\ar[d]^{L\bm{\tau}^*\ephi_{{\bm{\alpha}}}} & \eE_{\bm{\beta}}  \ar[d]^{\ephi_{{\bm{\beta}}}} \ar[r] & \eE_{\bm{\tau}} \ar[d]^{\ephi_{{\bm{\tau}}}}\ar[r] & L\bm{\tau}^*\eE_{\bm{\alpha}}[1] \ar[d]^{L\bm{\tau}^*\ephi_{{\bm{\alpha}}}[1]}\\
L\bm{\tau}^*\eL_{\bm{\alpha}} \ar[r] &  \eL_{\bm{\beta}}\ar[r]& \eL_{\bm{\tau}} \ar[r] & L\bm{\tau}^*\eL_{\bm{\alpha}}[1]
 }  
\end{array}\label{alpha_beta_tau_pot_dist_triangle_diagram}
\end{align}
Moreover, it was shown in \cite[Lemma 4.1.1]{Leigh_Ram} that the associated morphism $\ephi_{{\bm{\tau}}}$ in $\mathrm{D}_b(\M^r)$ is a relative obstruction theory. \\
\end{re}

\begin{re}[\textit{Equivariant Perfect Relative Obstruction Theory for \texorpdfstring{$\ForMapDRtoRSM$}{nu}}] \label{equivariant_pot_nu_re}
Recall the following diagram of equivariant morphisms (\ref{M1/r_defining_diagram}) from  definition \ref{M1/r_main_definition}:
\begin{align}
\begin{array}{c}
\xymatrix@R=1.5em{
		\M^{\frac{1}{r}} \ar[r]^{\bm{i}\hspace{0.5em}}\ar[d]_{\ForMapDRtoRSM} &\mathsf{Tot}\,\bm{\pi}_*\L \ar[d]^{\bm{\tau}}\\
		\M^{r}\ar[r]_{\bm{i'}\hspace{0.5em}} &\mathsf{Tot}\,\bm{\pi}_*\L^r
} 
\end{array}\label{M1/r_defining_square_repeat}
\end{align}
There is a natural morphism $L\bm{i}^*\eL_{\bm{\tau}} \rightarrow \eL_{\ForMapDRtoRSM}$ in $\mathrm{D}_b^{\mathtt{e}}(\M^{1/r})$ which gives rise to the following diagram defining the morphism $\ephi_{\nu}: \eE_{\ForMapDRtoRSM}  \rightarrow \eL_{\ForMapDRtoRSM}$: 
\begin{align}
\begin{array}{c}
\xymatrix@R=1.5em{
L\bm{i}^*\eE_{\bm{\tau}}  \ar@{=}[d]_{}\ar[r]^{L\bm{i}^*\ephi_{\tau}} &  L\bm{i}^*\eL_{\bm{\tau}}  \ar[d]\\
\eE_{\ForMapDRtoRSM} \ar[r]^{\ephi_{\nu}} &  \eL_{\ForMapDRtoRSM}
 }
 \end{array}\label{nu_relative_pot_defining_diagram}
\end{align}
It was shown in \cite[Thm. 4.1.4]{Leigh_Ram} that the associated morphism $\phi_{\nu}: \E_{\ForMapDRtoRSM}  \rightarrow \bbL_{\ForMapDRtoRSM}$ is a perfect relative obstruction theory. \\
\end{re}

\begin{re}[\textit{Alternative Description for the Equivariant Perfect Relative Obstruction Theory for \texorpdfstring{$\ForMapDRtoRSM$}{nu}}] \label{equivariant_pot_nu_re_alt}

The equivariant perfect relative obstruction theory for $\ForMapDRtoRSM$ from \ref{equivariant_pot_nu_re} can also be constructed by considering the following commutative diagram:
\begin{align}
\begin{array}{c}
\xymatrix@R=1.5em{
\M^{\frac{1}{r}} \ar@{=}[d] & \calC^{\frac{1}{r}} \ar[l]_{\bm{\pi}^{\frac{1}{r}}} \ar@{=}[d] \ar[r]^{\widehat{\mathfrak{f}}~~} & \mathsf{Tot}_{\calC^{\frac{1}{r}}} \L \ar[r]^{~~\bm{\beta}^{\frac{1}{r}}}  \ar[d]^{\bm{\tau}^{\frac{1}{r}}} & \calC^{\frac{1}{r}} \ar@{=}[d]\\
\M^{\frac{1}{r}} & \calC^{\frac{1}{r}} \ar[l]_{\bm{\pi}^{\frac{1}{r}}}  \ar[r]^{\widehat{\mathfrak{f}}'~~} & \mathsf{Tot}_{\calC^{\frac{1}{r}}} \L^r\ar[r]^{~~\bm{\alpha}^{\frac{1}{r}}}  & \calC^{\frac{1}{r}} \\
}
 \end{array}
\end{align}
In this diagram $\bm{\alpha}^{1/r}$, $\bm{\beta}^{1/r}$ and $\bm{\tau}^{1/r}$ are the morphisms from \ref{rth_power_maps_on_M} pulled back to $\M^{1/r}$ from $\M^r$. The morphisms $\widehat{\mathfrak{f}}$ and $\widehat{\mathfrak{f}}'$ closed immersions defined by the universal sections $\bm{\sigma}$ and $\bm{\sigma}^r$ respectively. Using this notation 
\[
\eE_{\nu} = R{\pi}^{_{\frac{1}{r}}}_* (L\widehat{\mathfrak{f}}^* \eT_{\bm{\tau}^{_{\frac{1}{r}}}} \otimes \omega_{{\pi}^{_{\frac{1}{r}}}} )[1]
\]
(or $\eF_{\nu} = R\bm{\pi}^{_{1/r}}_* L\widehat{\mathfrak{f}}^*  \eT_{\bm{\tau}^{_{1/r}}}$ in the (derived) dual language of \ref{localisation_formula_section}). This was shown for the non-equivariant case in \cite[Lem. 4.1.2]{Leigh_Ram} and those methods extend to the equivariant case. \\
\end{re}

\subsection{Perfect Obstruction Theory for \texorpdfstring{$\M^{1/r}$}{M1/r}}\label{POT_for_M1/r_subsec}

\begin{re}
Using the notation of (\ref{main_pot_diagram}) and (\ref{M1/r_defining_square_repeat}) there is the following commuting diagram
\[
\xymatrix@R=1.5em{
L\bm{i'}^*\bm{\alpha}^*\eL_{\M^r} \ar[r]\ar[rd]_{\cong}& L\bm{i'}^*\eL_{\Tot \bm{\pi}_*\L^r} \ar[d]\\
 &  \eL_{\M^r}
 }
\]
 in $\mathrm{D}^{\mathtt{e}}_b(\M)$ which arises from the properties of the equivariant cotangent complex given in \ref{equivariant_cotangent_complex}. The isomorphism follows from $\bm{\alpha} \circ \bm{i'}^* = \mathrm{id}_{\M^r}$. We can extend this diagram to the following diagram with distinguished triangles as rows:
\begin{align}
\begin{array}{c}
\xymatrix@R=1.5em{
L\bm{i'}^*\bm{\alpha}^*\eL_{\M^r} \ar[r]\ar[d]_{\cong}& L\bm{i'}^*\eL_{\Tot \bm{\pi}_*\L^r} \ar[d] \ar[r] & L\bm{i'}^*\eL_{\bm{\alpha}}\ar[d]\ar[r] & L\bm{i'}^*\bm{\alpha}^*\eL_{\M^r}[1] \ar[d]_{\cong}\\
 \eL_{\M^r} \ar@{=}[r] &  \eL_{\M^r} \ar[r]& 0 \ar[r] & \eL_{\M^r}[1]
 } 
 \end{array}\label{alpha_section_composition_cotangent_diagram_full}
\end{align}
This shows that the pullback by $L\bm{i'}^*$ of the natural morphism $\eL_{\bm{\alpha}} \rightarrow \bm{\alpha}^*\eL_{\M^r}[1]$ is the zero morphism in $\mathrm{D}^{\mathtt{e}}_b(\M^r)$.\\
\end{re}

\begin{lemma} \label{POT_double_square_commuting diagram}
There exists a commuting diagram in $\mathrm{D}^{\mathtt{e}}_b(\M^{1/r})$ of the form
\[
\xymatrix@R=2em{
L\bm{i}^* \eE_{\bm{\tau}}[-1] \ar[r]\ar[d]^{L\bm{i}^*\ephi_{\bm{\tau}}[-1]} & \calE \ar[d]_{}\ar[r] & L\ForMapDRtoRSM^* \eE_{\M^r} \ar[d]^{L\bm{\ForMapDRtoRSM}^*\ephi_{\M^r}}\\
L\bm{i}^* \eL_{\bm{\tau}}[-1] \ar[r]^{v_1\hspace{1.5em}} & L\ForMapDRtoRSM^* L\bm{i'}^*\eL_{\Tot \bm{\pi}_*\L^r}  \ar[r]^{\hspace{1.5em}v_2} & L\ForMapDRtoRSM^* \eL_{\M^r}
 }
\]
where $\calE\in \mathrm{D}^{\mathtt{e}}_b(\M^{1/r})$, $v_1$ is the pullback by $L\bm{i}^*$ of the (shifted) connecting morphism $\eL_{\bm{\tau}}[-1] \rightarrow L\bm{\tau}^*\eL_{\Tot \bm{\pi}_*\L^r}$ arising from the cotangent complex distinguished triangle for $\bm{\tau}$ and $v_2$ is the pullback by $L\ForMapDRtoRSM^*$ of the canonical morphism $L\bm{i'}^*\eL_{\Tot \bm{\pi}_*\L^r}\rightarrow \eL_{\M^r}$ arising from $\bm{i'}$. 
\end{lemma}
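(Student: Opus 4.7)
The plan is to define $\calE$ as a homotopy pullback in $\mathrm{D}_b^{\mathtt{e}}(\M^{\frac{1}{r}})$ so that the right square commutes tautologically, and then to construct the left horizontal morphism using the perfect obstruction theory compatibility diagram (\ref{alpha_beta_tau_pot_dist_triangle_diagram}) combined with the splitting observation from (\ref{alpha_section_composition_cotangent_diagram_full}).

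First, I would take $\calE$ to be the homotopy fibre of the difference morphism
\[
L\ForMapDRtoRSM^*\eE_{\M^r} \oplus L\ForMapDRtoRSM^*L\bm{i'}^*\eL_{\Tot \bm{\pi}_*\L^r} \longrightarrow L\ForMapDRtoRSM^*\eL_{\M^r}
\]
given by $L\ForMapDRtoRSM^*\ephi_{\M^r}$ on the first summand and the negative of $v_2$ on the second. This immediately produces the middle vertical arrow, the right horizontal arrow, and the commutativity of the right square via the universal property of the homotopy pullback.

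Next, I would construct the left horizontal map $L\bm{i}^*\eE_{\bm{\tau}}[-1] \to \calE$. By the universal property of the homotopy pullback, this amounts to producing two morphisms whose compositions to $L\ForMapDRtoRSM^*\eL_{\M^r}$ agree. For the map to $L\ForMapDRtoRSM^*L\bm{i'}^*\eL_{\Tot\bm{\pi}_*\L^r}$ I would take $v_1 \circ L\bm{i}^*\ephi_{\bm{\tau}}[-1]$, using the cartesian square (\ref{M1/r_defining_square_repeat}) to identify $L\bm{i}^* L\bm{\tau}^* \cong L\ForMapDRtoRSM^* L\bm{i'}^*$. For the map to $L\ForMapDRtoRSM^*\eE_{\M^r}$ I would pull back the morphism of distinguished triangles (\ref{alpha_beta_tau_pot_dist_triangle_diagram}) through $L\bm{i}^*$ to obtain a connecting morphism $L\bm{i}^*\eE_{\bm{\tau}}[-1] \to L\ForMapDRtoRSM^*L\bm{i'}^*\eE_{\bm{\alpha}}$, then compose with a natural comparison morphism $L\bm{i'}^*\eE_{\bm{\alpha}} \to \eE_{\M^r}$ provided by the explicit constructions via Grothendieck duality in \ref{equi_pot_for_total_spaces} and \ref{equi_POT_for_M_re}.

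The main obstacle is verifying that the two compositions to $L\ForMapDRtoRSM^*\eL_{\M^r}$ agree. Here the splitting observation (\ref{alpha_section_composition_cotangent_diagram_full}) is essential: since the morphism $L\bm{i'}^*\eL_{\bm{\alpha}} \to \eL_{\M^r}[1]$ vanishes, the cotangent triangle for $\bm{i'}$ splits and the compatibility between $\ephi_{\M^r}$, $\ephi_{\bm{\alpha}}$, and the various connecting morphisms can be extracted. Together with the compatibility of the equivariant cotangent complex under the cartesian square (\ref{equivariant_compatibility_square_re}), this furnishes the required homotopy, and the left square then commutes by naturality of the connecting morphism in (\ref{alpha_beta_tau_pot_dist_triangle_diagram}).
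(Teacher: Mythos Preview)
Your homotopy-pullback approach is a genuinely different route from the paper's, but it has a gap at the point where you invoke a ``natural comparison morphism $L\bm{i'}^*\eE_{\bm{\alpha}} \to \eE_{\M^r}$.'' The references you cite do not provide such a map: \ref{equi_pot_for_total_spaces} builds $\eE_{\bm{\alpha}}$ as a relative obstruction theory for the projection $\bm{\alpha}\colon \Tot\,\bm{\pi}_*\L^r \to \M^r$ via Grothendieck duality along the universal curve, whereas \ref{equi_POT_for_M_re} builds $\eE_{\M^r}$ from the unrelated forgetful morphism $\ForMap\colon \M^r \to \frakM^r\times\calT$. These two complexes have no reason to admit a comparison morphism, and without one your map $L\bm{i}^*\eE_{\bm{\tau}}[-1]\to L\ForMapDRtoRSM^*\eE_{\M^r}$---and hence the map into your homotopy pullback $\calE$---is undefined.

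The paper avoids this problem altogether. Instead of looking for a map $L\bm{i'}^*\eE_{\bm{\alpha}}\to\eE_{\M^r}$, it uses (\ref{alpha_section_composition_cotangent_diagram_full}) to see that the connecting morphism $L\bm{i'}^*\eL_{\bm{\alpha}}\to\eL_{\M^r}[1]$ vanishes, and then simply \emph{declares} the analogous map on the $\eE$-side to be zero; the resulting square commutes trivially since both horizontals are zero. Combining this with the square pulled back from (\ref{alpha_beta_tau_pot_dist_triangle_diagram}) yields a commutative cube whose vertical arrows are either zero or have a zero object as source or target. Taking cones of those verticals produces the diagram of the lemma, with $\calE$ concretely equal to the direct sum $L\ForMapDRtoRSM^*L\bm{i'}^*\eE_{\bm{\alpha}}\oplus L\ForMapDRtoRSM^*\eE_{\M^r}$. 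So the splitting observation you correctly flag as essential is used not to verify a compatibility between two nonzero routes, but to make one of the routes zero outright.
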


\begin{proof}
We have the following two diagrams
\[
\xymatrix@R=2em{
 \eE_{\bm{\tau}}[-1] \ar[r]\ar[d]^{\ephi_{\bm{\tau}}[-1]}& L\bm{\tau}^* \eE_{\bm{\alpha}}   \ar[d]^{L\bm{\tau}^* \ephi_{\bm{\alpha}}}  \\
 \eL_{\bm{\tau}}[-1] \ar[r] & L\bm{\tau}^* \eL_{\bm{\alpha}}   
 }
 \hspace{1.8cm}
 \xymatrix@R=2em{
L\bm{i'}^* \eE_{\bm{\alpha}}\ar[r]^{0~~~}\ar[d]^{L\bm{i'}^* \ephi_{\bm{\alpha}}}& L\bm{i'}^*\bm{\alpha}^* \eE_{\M^r}[1]   \ar[d]^{L\bm{i'}^* \ephi_{\M^r}[1]}  \\
L\bm{i'}^* \eL_{\bm{\alpha}} \ar[r]^{0~~~} & L\bm{i'}^*\bm{\alpha}^* \eL_{\M^r}[1]   
 }
\]
where the left comes from (\ref{alpha_beta_tau_pot_dist_triangle_diagram}) and the $0$ in the bottom of the right diagram comes from the comment following (\ref{alpha_section_composition_cotangent_diagram_full}). We pull back the left by $L\bm{i}^*$ and the right by  $L\ForMapDRtoRSM^*$ to get the following diagram:
\begin{align*}
\begin{tikzpicture}[baseline=(current  bounding  box.center), node distance=3.5cm, auto,
  f->/.style={->,preaction={draw=white, -,line width=3pt}},
  d/.style={double distance=1pt},
  Dash/.style={dashed,->},
  fd/.style={double distance=1pt,preaction={draw=white, -,line width=3pt}}]
  \node (11) {$L\bm{i}^* \eL_{\bm{\tau}}[-1]$};
  \node [right of=11] (12){$L\ForMapDRtoRSM^*L\bm{i'}^*\eL_{\bm{\alpha}}$};
  \node [right of=12] (13){0};
  \node [below of=11, node distance=2.2cm] (21) {0}; 
  \node [right of=21] (22)  {$L\ForMapDRtoRSM^*L\bm{i'}^*\bm{\alpha}^*\eL_{\M^r}[1]$};
  \node [right of=22] (23)  {$L\ForMapDRtoRSM^*\eL_{\M^r}[1]$};
  \node (11b) [above right = 0.5cm and 0.0cm of 11]  {$L\bm{i}^* \eE_{\bm{\tau}}[-1]$};
  \node [right of=11b] (12b) {$L\ForMapDRtoRSM^*L\bm{i'}^*\eE_{\bm{\alpha}}$};
  \node [right of=12b] (13b) {0};
  \node [below of=11b, node distance=2.2cm] (21b){0}; 
  \node [right of=21b] (22b)  {$L\ForMapDRtoRSM^*L\bm{i'}^*\bm{\alpha}^*\eE_{\M^r}[1]$};
  \node [right of=22b] (23b)  {$L\ForMapDRtoRSM^*\eE_{\M^r}[1]$};
    \draw[->]  (11b) -> (12b) node[pos=0.7, above]{};     \draw[->]  (12b) -> (13b) node[pos=0.7, above]{};
  \draw[->]  (21b) -> (22b) node[pos=0.2, above]{};   \draw[->]  (22b) -> (23b) node[pos=0.7, above]{\footnotesize$\cong$};
 \draw[->] (11b) -> (21b) node[pos=0.7]{};  \draw[->] (12b) -> (22b) node[pos=0.2]{\footnotesize$0$}; \draw[->] (13b) -> (23b); 
  \draw[f->]  (11) -> (12) node[pos=0.4, above]{};    \draw[f->]  (12) -> (13) node[pos=0.4, above]{};  
  \draw[f->]  (21) -> (22) node[pos=0.6, above]{};   \draw[f->]  (22) -> (23) node[pos=0.7, above]{\footnotesize$\cong$};
  \draw[f->] (11) -> (21) node[pos=0.25, align=right, left]{};  \draw[f->] (12) -> (22) node[pos=0.2]{\footnotesize$0$}; \draw[f->] (13) -> (23); 
  \draw[->]  (11b) -> (11) node[pos=0.7, align=left, above]{}; \draw[->] (12b) -> (12) node[pos=0.7, align=left, above]{}; \draw[->] (13b) -> (13) node[pos=0.7, align=left, above]{}; 
  \draw[->] (21b) -> (21) ; \draw[->] (22b) -> (22) ; \draw[->] (23b) -> (23) ; 
\end{tikzpicture} 
\end{align*}
The desired diagram (shifted by $1$) is constructed by taking the cone of the vertical morphisms.
\end{proof}

\begin{remark}
All the discussion and results in section \ref{equi_pot_section} of this article still hold if we replace $\P^1$ with a smooth curve $X$ and drop the word \textit{equivariant}. In particular this is true for theorem \ref{POT_theorem}. We have have used  $\P^1$ and the equivariant language to be consistent with the other sections of the article.   \\
\end{remark}

\begin{corollary}[\textit{Theorem \ref{POT_theorem}}] \label{POT_for_M1/r_main_corollary}
There is perfect a perfect obstruction theory
\[
\ephi_{\M^{\frac{1}{r}}}:\eE_{\M^{\frac{1}{r}}}\longrightarrow  \eL_{\M^{\frac{1}{r}}}
\]
fitting into the following commutative diagram with distinguished triangles as rows:
\begin{align*}
\xymatrix@R=2.5em{
L\ForMapDRtoRSM^*\eE_{\M^r} \ar[r]\ar[d]^{L\ForMapDRtoRSM^*\ephi_{\M^r}} & \eE_{\M^{\frac{1}{r}}}  \ar[d]^{\ephi_{\M^{1/r}}} \ar[r] & \eE_{\ForMapDRtoRSM} \ar[d]^{\ephi_{\ForMapDRtoRSM} }\ar[r] & L\ForMapDRtoRSM^*\eE_{\M^r}[1] \ar[d]^{L\ForMapDRtoRSM^*\ephi_{\M^r}[1]}\\
L\ForMapDRtoRSM^*\eL_{\M^r} \ar[r] &  \eL_{\M^{\frac{1}{r}}}\ar[r]& \eL_{\ForMapDRtoRSM} \ar[r] & L\ForMapDRtoRSM^*\eL_{\M^r}[1]
 } 
\end{align*}
\end{corollary}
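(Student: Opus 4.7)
The plan is to assemble $\eE_{\M^{1/r}}$ as the cone of a certain morphism produced by Lemma \ref{POT_double_square_commuting diagram}, then appeal to axiom (TR3) of triangulated categories to obtain the middle vertical arrow $\ephi_{\M^{1/r}}$, and finally verify the perfect obstruction theory axioms via the long exact cohomology sequence.

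First, recall that $\eE_{\ForMapDRtoRSM}$ is by definition $L\bm{i}^*\eE_{\bm{\tau}}$ (as in \ref{equivariant_pot_nu_re} and, dually, in \ref{equivariant_pot_nu_re_alt}). Reading the top row of Lemma \ref{POT_double_square_commuting diagram} as a composition and shifting by $[1]$, I obtain an equivariant morphism
\[
a\colon \eE_{\ForMapDRtoRSM} \longrightarrow L\ForMapDRtoRSM^*\eE_{\M^r}[1]
\]
in $\mathrm{D}^{\mathtt{e}}_b(\M^{1/r})$. Completing to a distinguished triangle defines
\[
L\ForMapDRtoRSM^*\eE_{\M^r} \longrightarrow \eE_{\M^{1/r}} \longrightarrow \eE_{\ForMapDRtoRSM} \xrightarrow{\ a\ } L\ForMapDRtoRSM^*\eE_{\M^r}[1]
\]
and hence the top row of the target diagram. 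Similarly, the bottom row of Lemma \ref{POT_double_square_commuting diagram}, composed with the natural map $L\bm{i}^*\eL_{\bm{\tau}}\to \eL_{\ForMapDRtoRSM}$ from \ref{equivariant_pot_nu_re} and the canonical distinguished triangle of cotangent complexes for $\ForMapDRtoRSM$, exhibits (after shifting) the connecting morphism $\eL_{\ForMapDRtoRSM}\to L\ForMapDRtoRSM^*\eL_{\M^r}[1]$ as a factorisation. This is exactly the bottom row of the target diagram.

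Next, the whole point of Lemma \ref{POT_double_square_commuting diagram} (and of the compatibility square recorded in \ref{equivariant_compatibility_square_re}) is that after shifting by $[1]$ the commuting $3\times 2$ diagram becomes the commuting square
\[
\xymatrix@R=1.5em{
\eE_{\ForMapDRtoRSM} \ar[r]^{a\hspace{3em}}\ar[d]_{\ephi_{\ForMapDRtoRSM}} & L\ForMapDRtoRSM^*\eE_{\M^r}[1]\ar[d]^{L\ForMapDRtoRSM^*\ephi_{\M^r}[1]}\\
\eL_{\ForMapDRtoRSM}\ar[r] & L\ForMapDRtoRSM^*\eL_{\M^r}[1]
}
\]
expressing the compatibility of the connecting morphisms of the $\eE$-triangle and the $\eL$-triangle with the two outer vertical POT-morphisms. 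Axiom (TR3) for the triangulated category $\mathrm{D}^{\mathtt{e}}_b(\M^{1/r})$ then yields an equivariant morphism $\ephi_{\M^{1/r}}\colon \eE_{\M^{1/r}}\to \eL_{\M^{1/r}}$ completing the data to a morphism of distinguished triangles, which is precisely the asserted commutative diagram.

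It remains to check that $\ephi_{\M^{1/r}}$ is a perfect obstruction theory. Since $\eE_{\ForMapDRtoRSM}$ is of perfect amplitude $[-1,0]$ and $L\ForMapDRtoRSM^*\eE_{\M^r}$ is perfect of amplitude $[-1,0]$ (as pullback of a perfect complex), the cone $\eE_{\M^{1/r}}$ inherits perfect amplitude $[-1,0]$. To see that the associated morphism $\phi_{\M^{1/r}}\colon \E_{\M^{1/r}}\to \bbL_{\M^{1/r}}$ induces an isomorphism on $h^0$ and a surjection on $h^{-1}$, I will apply the five-lemma to the long exact sequences of cohomology sheaves of the two horizontal distinguished triangles: the outer vertical arrows $L\ForMapDRtoRSM^*\phi_{\M^r}$ and $\phi_{\ForMapDRtoRSM}$ have this property (the former because $\ephi_{\M^r}$ is a POT from \ref{equi_POT_for_M_re} and pullback preserves the relevant cohomological properties along the smooth target, and the latter by \cite[Thm.~4.1.4]{Leigh_Ram}), so the middle arrow $\phi_{\M^{1/r}}$ does as well. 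The main subtlety will be Step~3, namely verifying that the factorisation provided by the bottom row of Lemma \ref{POT_double_square_commuting diagram} really does reproduce the connecting morphism of the cotangent complex triangle for $\ForMapDRtoRSM$; this is where the compatibility square of \ref{equivariant_compatibility_square_re} (applied to the cartesian square \eqref{M1/r_defining_square_repeat}) does the essential work, together with the identification $L\bm{\tau}\circ\bm{i} \cong L\bm{i'}\circ \ForMapDRtoRSM$.
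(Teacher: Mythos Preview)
Your proposal is correct and follows essentially the same route as the paper: both arguments use Lemma~\ref{POT_double_square_commuting diagram} (together with the compatibility square of \ref{equivariant_compatibility_square_re} applied to the cartesian square \eqref{M1/r_defining_square_repeat} and the identification $\eE_{\ForMapDRtoRSM}=L\bm{i}^*\eE_{\bm{\tau}}$ from \ref{equivariant_pot_nu_re}) to produce a commuting square between the connecting morphisms, then invoke TR3/taking cones to fill in $\ephi_{\M^{1/r}}$, and finally verify the perfect obstruction theory axioms via the long exact cohomology sequence (your five-lemma phrasing and the paper's ``cohomology of cones'' phrasing are equivalent). The subtlety you flag in your final paragraph---that the bottom row of Lemma~\ref{POT_double_square_commuting diagram} must be identified with the genuine connecting morphism $\eL_{\ForMapDRtoRSM}\to L\ForMapDRtoRSM^*\eL_{\M^r}[1]$ rather than merely a map out of $L\bm{i}^*\eL_{\bm{\tau}}$---is exactly the point the paper addresses by assembling the explicit cube from the two small squares, so you have correctly located where the work lies.
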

\begin{proof} 
We have the following commutative diagrams
\[
\xymatrix@R=2.5em{
L\bm{i}^*\E_{\bm{\tau}}  \ar@{=}[d]_{}\ar[r]^{L\bm{i}^*\phi_{\tau}} &  L\bm{i}^*\bbL_{\bm{\tau}}  \ar[d]\\
\E_{\ForMapDRtoRSM} \ar[r]^{\phi_{\nu}} &  \bbL_{\ForMapDRtoRSM}
 }
 \hspace{1.5cm}
 \xymatrix@R=2.5em{
L\bm{i}^*\bbL_{\bm{\tau}}[-1]  \ar[d]_{}\ar[r]^{} &  L\bm{i}^*L\bm{\tau}^*\bbL_{\Tot \bm{\pi}_*\L^r} \ar[d]\\
\bbL_{\ForMapDRtoRSM}[-1] \ar[r] &  L\ForMapDRtoRSM^*\bbL_{\M^r}
 }
\]
where the left comes from (\ref{nu_relative_pot_defining_diagram}). The right comes from taking the cone of the diagram in \ref{equivariant_compatibility_square_re} when the construction is applied to square (\ref{M1/r_defining_square_repeat}). Note that the top and right morphisms of the right diagram are the bottom morphisms from lemma \ref{POT_double_square_commuting diagram}. So, combining these  with the diagram from lemma \ref{POT_double_square_commuting diagram} gives the following commuting diagram (which defines the dashed line):

\begin{align*}
\begin{tikzpicture}[baseline=(current  bounding  box.center), node distance=3.5cm, auto,
  f->/.style={->,preaction={draw=white, -,line width=3pt}},
  d/.style={double distance=1pt},
  Dash/.style={dashed,->},
  fd/.style={double distance=1pt,preaction={draw=white, -,line width=3pt}}]
  \node (11) {$L\bm{i}^*\bbL_{\bm{\tau}}[-1]$ };
  \node [right of=11] (12){$L\bm{i}^*L\bm{\tau}^*\bbL_{\Tot \bm{\pi}_*\L^r}$ };
  \node [below of=11, node distance=2cm] (21) {$\bbL_{\ForMapDRtoRSM}[-1]$ };
  \node [right of=21] (22) {$L\ForMapDRtoRSM^*\bbL_{\M^r}$};
  \node (11b) [above right = 0.75cm and -0.2cm of 11]  {$L\bm{i}^*\E_{\bm{\tau}}[-1]$};
  \node [right of=11b] (12b) {$\calE$};
  \node [below of=11b, node distance=2cm] (21b){$\E_{\ForMapDRtoRSM}[-1]$};
  \node [right of=21b] (22b)  {$L\ForMapDRtoRSM^*\E_{\M^r}$};
    \draw[->]  (11b) -> (12b) node[pos=0.7, above]{};
  \draw[Dash]  (21b) -> (22b) node[pos=0.7, above]{}; 
 \draw[d] (11b) -> (21b) node[pos=0.7]{};  \draw[->] (12b) -> (22b) node[pos=0.7]{}; 
  \draw[f->]  (11) -> (12) node[pos=0.4, above]{};  
  \draw[f->]  (21) -> (22) node[pos=0.4, above]{}; 
  \draw[f->] (11) -> (21) node[pos=0.25, align=right, left]{};  \draw[f->] (12) -> (22) node[pos=0.25]{}; 
  \draw[->]  (11b) -> (11) node[pos=0.7, align=left, above]{}; \draw[->] (12b) -> (12) node[pos=0.7, align=left, above]{}; 
  \draw[->] (21b) -> (21) ; \draw[->] (22b) -> (22) ; 
\end{tikzpicture} 
\end{align*}
The desired morphism and diagram comes from taking the cones of the horizontal arrows in the bottom square.\\

To see that the associated object $\phi_{\M^{1/r}}$ in $\mathrm{D}_b(\M^{1/r})$ is a perfect obstruction theory for $\M^{1/r}$ we recall from \ref{equi_POT_for_M_re} and \ref{equivariant_pot_nu_re} that $\phi_{\M^r}$ and $\phi_{\ForMapDRtoRSM}$ are perfect obstruction theories. We have that $\E_{\ForMapDRtoRSM}[-1]$ is perfect in $[0,1]$ and $\E_{\M^r}$ is perfect in $[-1,0]$ so the cone $\mathrm{Cone}(\E_{\ForMapDRtoRSM}[-1]\rightarrow \E_{\M^r})$ is perfect in $[-1,0]$. To see that $\phi_{\M^{1/r}}$ is an obstruction theory we consider the following long exact sequence of the cohomology of the cones:
\[\xymatrix@R=0.75em{
 & \calH^{\SmNeg1}(\mathrm{cone}(\ForMapDRtoRSM^*\phi_{\M^r})) \ar[r] &\calH^{\SmNeg1}(\mathrm{cone}(\phi_{\M^{1/r}})) \ar[r]&\calH^{\SmNeg1}(\mathrm{cone}( \phi_{\ForMapDRtoRSM}))\\
\ar[r] & \calH^{0}(\mathrm{cone}(\ForMapDRtoRSM^*\phi_{\M^r})) \ar[r] &\calH^{0}(\mathrm{cone}(\phi_{\M^{1/r}})) \ar[r]&\calH^{0}(\mathrm{cone}(\phi_{\ForMapDRtoRSM} )).
}
\]
Which shows that $\calH^{\SmNeg1}(\mathrm{cone}(\phi_{\M^{1/r}})) =\calH^{0}(\mathrm{cone}(\phi_{\M^{1/r}})) =0$ and making $\phi_{\M^{1/r}}$ an obstruction theory for $\M^{1/r}$.
\end{proof}


\end{document}